\documentclass[12pt, oneside, a4paper]{article}
\usepackage[utf8]{inputenc}
\usepackage[english]{babel}
\usepackage{amsfonts}
\usepackage{amsmath}
\usepackage{amsthm}
\usepackage{MnSymbol}
\usepackage{wasysym}
\usepackage{setspace}
\usepackage{hyperref}
\usepackage{mathtools}
\usepackage{multicol}
\setlength{\columnsep}{1cm}
\usepackage[letterpaper,margin=2.5cm,left=3cm,right=2.5cm, includehead]{geometry}
\usepackage{graphicx}
\usepackage{float}
\usepackage{enumitem}
\usepackage[titletoc]{appendix}

\usepackage{mathrsfs}
\usepackage{pdfpages}
\usepackage{titling}
\usepackage{tocbibind}
\usepackage[spanish]{babelbib}
\usepackage{authblk}
\usepackage{tikz}
\usetikzlibrary{arrows.meta, positioning}
\usepackage{tikz-cd}
\usepackage{color,soul}

\usepackage{import}
\usepackage{xifthen}
\usepackage{pdfpages}
\usepackage{transparent}

\usepackage{eso-pic}

\newcommand{\R}{\mathbb{R}}
\newcommand{\N}{\mathbb{N}}
\newcommand{\LL}{\mathbb{L}}
\newcommand{\Z}{\mathbb{Z}}

\newcommand{\m}{\mathscr{M}}
\newcommand{\n}{\mathscr{N}}

\newcommand{\B}{\mathcal{B}}
\newcommand{\C}{\mathcal{C}}
\DeclareMathOperator{\Span}{span}
\def\bb#1\eb{\textcolor{blue}{#1}}
\def\br#1\er{\textcolor{red}{#1}}

\newcommand{\bM}{\overline{M}}

\newcommand{\bSigma}{\overline{\Sigma}}
\newcommand{\mb}{\mathbb}

\newcommand{\btau}{\bar{\tau}}

\newcommand{\bsigma}{\bar{\sigma}}

\newcommand{\inj}{\text{inj}}

\newtheorem{theorem}{Theorem}[section]
\newtheorem{lemma}[theorem]{Lemma}
\newtheorem{proposition}[theorem]{Proposition}
\newtheorem{corollary}[theorem]{Corollary}
\newtheorem{definition}[theorem]{Definition}
\newtheorem{example}[theorem]{Example}
\newtheorem{convention}[theorem]{Convention}
\newtheorem{note}[theorem]{Note}

\theoremstyle{definition}
\newtheorem{remark}[theorem]{Remark}

\newtheoremstyle{named}{}{}{\itshape}{}{\bfseries}{.}{.5em}{\e #1}
\theoremstyle{named}




\date{}
\begin{document}

\title{Lorentzian Cheeger-Gromov convergence \\ and temporal functions}
\author[1]{Sa\'ul Burgos} 
\affil[1]{\textit{Departamento de Geometr\'ia y Topolog\'ia, Facultad de Ciencias \& IMAG 
Universidad de Granada, 18071 Granada, Spain}}
\author[2]{José L. Flores} 
\affil[2]{\textit{Departamento de \'Algebra, Geometr\'{i}a y Topolog\'{i}a\\ Facultad de Ciencias, Universidad de M\'alaga\\ Campus Teatinos, 29071 M\'alaga, Spain}}
\author[1]{Miguel S\'anchez} 



\maketitle

\begin{center}
	\textit{Dedicated to Alfonso Romero on the occasion of his 70th birthday.}
\end{center}

\begin{abstract} Uniqueness  (up to isometries) and existence of limits are studied in the context of Cheeger–Gromov convergence of spacetimes. To address the non-compactness of the vector isometry group in the semi-Riemannian setting, standard {\em pointed} convergence is strengthened to {\em anchored} convergence, which in the Lorentzian case requires the convergence of a timelike direction. This allows one to construct a local isometry between the neighborhoods of the basepoints, which can be extended globally under geodesic completeness or just inextensibility.
In spacetimes, by using Cauchy temporal functions as both strengthening of anchors and tools to ``Wick rotate'' metrics, a special notion of convergence for globally hyperbolic spacetimes  (including those with timelike boundaries)  is introduced. After revisiting the tools related to time functions and studying their connections with Sormani-Vega null distance, the machinery of Riemannian Cheeger-Gromov theory becomes applicable. In particular, several results of independent interest are obtained, including local regularity of time functions up to rescaling, global and local characterizations of $h$-steep functions,  independence of steepness and $h$-steepness for temporal functions, compatibility of both conditions for Cauchy temporal functions, and the stability of the latter.

\end{abstract}

\newpage

\tableofcontents

\section{Introduction}	


{\bf The  problem of convergence for Lorentzian metrics.} Since the pioneer work of Noldus \cite{Noldus2, Noldus1}, several authors have tried to make sense of intrinsic convergence of metrics in the Lorentzian setting. The first natural option would be to extend classical Cheeger-Gromov convergence, introduced for the Riemannian case in Cheeger's thesis \cite{Cheeger1967,Cheeger70} and developed further in the {\em revolutionary} article (in Cheeger's words \cite{Cheeger2010}) by Gromov \cite{gromov-revolutionary}.
M. T. Anderson \cite{Anderson2004CheegerGromov} considered a notion of Cheeger-Gromov convergence for Lorentzian metrics analogous to the one for the Riemannian case, pointed out difficulties caused by the non-compactness of the Lorentz group and focused in the case of metrics on the same manifold. Indeed, such a Riemannian definition may lead to the non-uniqueness of the limit, even in simple Lorentzian cases (see Example \ref{ejemplo1}). The prominence of this problem becomes more evident after the work by Mounoud \cite[Theorem 1]{Mounoud2003}, which implies the existence of Lorentzian metrics on a compact manifold which, up to diffeomorphisms, converge uniformly to multiple non-isometric metrics (Example \ref{ejemplo2})\footnote{\label{foot:Zeghib} The authors are grateful to Prof. A. Zeghib (ENS Lyon) for providing both the reference and a detailed proof of the mentioned fact.}.

More recently, M\"uller in \cite{Muller2024gromovhausdorff} and, independently, Minguzzi and Suhr  \cite{minguzzi-suhr2023}, as well as these authors with Bykov \cite{Minguzzi2024unbounded}, studied Gromov-Hausdorff convergence in restricted classes of globally hyperbolic spacetimes obtaining existence and uniqueness (in appropiate senses) by imposing some assumptions. 

 Sormani and Vega \cite{sormanivega} introduced the notion of null distance, in order to reduce the problem to a more standard one in metric spaces. This distance is defined by using suitable time functions. Then,  the original convergence of spacetimes can be associated with the convergence of metric spaces and integral currents. However, the relation with the convergence of the original Lorentzian metrics is unclear in general. This approach has been further studied by Sakovich and Sormani \cite{SakovichSormani2024Variousnotions}
and Burtscher and García-Heveling~\cite{GHnulldistance}, the latter making the null distance available to all globally hyperbolic spacetimes by using time functions which are $h$-steep.

In the present article, we will face this problem from a novel viewpoint.

\subsection{Overview}


\noindent 
{\bf  The general semi-Riemannian viewpoint}.  The first goal is to show how the uniqueness of Cheeger-Gromov limits in the Riemannian case admits generalizations  to sequences $\{(M_i, g_i)\}$ in the semi-Riemannian setting. 
Namely, in the Riemannian case, $\{(M_i, g_i)\}$ and its possible limit 
are required to be {\em pointed}, that is, with a prescribed basepoint $p_i\in M_i$  which will converge up to isometries to a basepoint in the limit. Now,
\begin{quote}
	\item  {\em in the semi-Riemannian case, both the sequence $(M_i, g_i)$ and the eventual limit will be 
    {\em anchored}, that is, pointed with an orthogonal splitting at the basepoint $T_{p} M = \mathcal{B}_{p} \oplus \mathcal{B}_{p} ^\perp$  in two complementary subspaces, one $\mathcal{B}_{p}$ negatively defined and the other one positively, being also the sequence of anchors  $\{\mathcal{B}_{p_i}\}$ required to converge to the anchor in the limit.}	
\end{quote}
	Notice that pointed Riemannian manifolds can be regarded as anchored with $\B_p=\{0_p\}$ and, in the Lorentzian case, anchoring becomes equivalent to the choice of a timelike direction (or a unit timelike vector) at the prescribed point. 
    
This notion is enough to prove uniqueness up to isometries in neighborhoods around  the points selected by the anchor (with no global hypothesis)  under $C^2$-convergence, see Theorem \ref{teor:uniqueness_semi-riemannian}. 
    Even though the  natural global hypotheses for the existence of a full isometry might be disputable because Hopf-Rinow properties  split into non-equivalent semi-Riemannian ones,   
         the following one extends the case of Riemannian completeness (see Remark \ref{r_inext}), apart from solving some issues in a known construction by Geroch \cite{Geroch} (Remark \ref{r_Geroch}).

	\begin{theorem}\label{coro_simplyconnected}
	    Let $\{(M_i,g_i,\mathcal{B}_{p_i})\}$ be a sequence of anchored semi-Riemannian manifolds $C^2$-converging to two limits   $(M, g, \mathcal{B}_p)$, $(\tilde M, \tilde g, \mathcal{B}_{\tilde p})$. 
                If both  limits  are  inextensible (in particular, if they are time, space or lightlike  geodesically complete) 
                then there exists a global isometry $F: (M,g)\longrightarrow (\tilde M, \tilde g)$ such that $F_*(\mathcal{B}_{p})=\mathcal{B}_{\tilde p}$.
	\end{theorem}

\smallskip

\noindent {\bf Back to Lorentz: temporal functions as anchors,  and Wick rotation.}
These difficulties for the uniqueness of the limits, in addition to the impossibility to transplant general  Riemannian theorems for the existence of limits, suggest to address the Lorentzian case by taking further advantage of  possibilities inherent to this signature.  Specifically,  notice that  globally hyperbolic spacetimes (which admit some analogies with complete Riemannian manifolds)  use to be endowed with a Cauchy temporal  function $\tau$  for practical purposes  and, in any case, the existence of such a $\tau$ is always ensured   \cite{BernalSanchez2005Splitting}. Then, $\tau$ itself can be regarded as an element for  stronger anchoring, to be preserved by convergence. Switching  the metric $g$  in the direction of $\nabla \tau$ one has a  ``Wick-rotated"  Riemannian metric\footnote{\label{foot:Wick} We adopt this name -- despite the absence of a literal rotation in the complex plane, as in the standard Wick rotation -- because it facilitates identification in our context and has been also used in the literature.}  and 
\begin{quote}
     {\em Riemannian Cheeger-Gromov machinery for both existence and uniqueness of the limits will be available in the setting of  globally hyperbolic spacetimes anchored by suitable temporal functions, by using Wick-rotated metrics.}
\end{quote}  
Moreover, this setting permits also to select a conformal representative of each metric and split the problem of convergence into   a conformally invariant one (the convergence of the causal structures) and the convergence of the conformal factors (i.e., convergence of the lapse  functions, a trivial issue for our purposes). 

\smallskip
\smallskip

\noindent {\bf Wick-rotated  vs null distance viewpoints.}
 The following  natural links with the framework of  the null distance emerge: 
\begin{itemize}
    \item{(a)} The null distance $\hat d_\tau$ is also constructed  by using a time function \cite{sormanivega, SakovichSormani2024Variousnotions}. 
  As  discussed along Section \ref{sec:temporalfunctions},  some information of  this function  is  ``hidden'' in $\hat d_\tau$, but  it will be explicit in our Wick-rotated metrics $g_W$. 

\item{(b)} Bernard and Suhr  introduced the notion of 
$h$-steep function \cite{bernard-suhr, Bernard2020Cauchyanduniform}, and Burtscher and García-Heveling   showed the equivalence of global hyperbolicity and completeness of the null distance $\hat d_\tau$ using $h$-steep functions for a complete $h$ (\cite{GHnulldistance},
see also Th. \ref{thm:GHnulldistance-conborde}).  This last property will turn out equivalent to the completeness of the Wick-rotated metric $g_W^\tau$ (see 
Theorem~\ref{intro:thm_A} below). 
\end{itemize}
 This suggests that the progress on the null distance can be applied for our Wick-rotated viewpoint, which  can arrive then to sharper results. It is worth mentioning that such a possibility was already explored by specialists by using the (regular) cosmological time function $\tau_{\hbox{\small{cosm}}}$ (introduced by L. Andersson et al. \cite{Galloway1998cosmologicaltime} for some classes of globally hyperbolic spacetimes), 
  see 
\cite{sormani2018oberwolfach}.

Let us emphasize that a key ingredient for applying Wick-rotated metrics is the existence of temporal functions yielding global splitting of the manifolds, as the Cauchy temporal ones. It is worth noting that such functions have been obtained not only for globally hyperbolic spacetimes \cite{BernalSanchez2005Splitting} but also for those with timelike boundary \cite{zepp:structure}, permitting   to switch to a problem in Riemannian manifolds with boundary. Moreover, the existence of steep Cauchy temporal functions \cite{muller-sanchez} not only permits to find isometric embeddings in Lorentz-Minkowski spacetime but also suggests possible extensions to Gromov-Hausdorff convergence (see Note \ref{note} below).  So, all these type of functions will be taken into account next.


\subsection{Outline by sections}

\noindent Once motivated  our approach,  we briefly explain our results as a guidance for the article. 

\smallskip
\smallskip

\noindent {\bf \S \ref{sec:preliminaries}, \S \ref{sec:temporalfunctions}. Revision of time functions. $h$-steepness and null distance}. The importance of time functions in our approach and previous ones make necessary a revision of them. This is carried out mostly in Section \ref{sec:temporalfunctions}, after some preliminaries in Section \ref{sec:preliminaries}.
This revision allows us to extend previous notions to the case of spacetimes with timelike boundary. However, our study has interest in its own right, independent of boundaries.

Indeed, there is a  proliferation of  types of time functions  in the literature (temporal, steep, $h$-steep, weak, locally Lipschitz or anti-Lipschitz), some of which combining local requirements of regularity with global properties.

From the local viewpoint, it is worth pointing out:

\begin{itemize} \item All time functions are locally Lipschitz up to a rescaling (see  Remark \ref{remark:lipschitz_uptorescaling}, Appendix~\ref{app:lipschitz_uptorescaling}). 
Thus, there are  three levels of time functions up to a rescaling: just time, locally anti-Lipschitz and temporal (\S \ref{sec:localtime_excepthsteep}, see Figure \ref{fig:local_properties_time}). The latter will be preferred here because they provide orthogonal decompositions. 

\item $h$-steep functions (Definition  \ref{def:steeptimefunctions}) become equivalent to temporal ones from a local viewpoint This justifies to use them just globally, then regarding $h$ as a complete Riemannian metric (\S \ref{sec:time_hsteep}, see Figure~\ref{figure:temporal_hsteep_equiv}).     
\end{itemize}

From the global viewpoint, a previous issue is  the independence and compatibility of $h$-steepness   and steepness (Definition \ref{def:generalizedtimefunction}). Indeed, the discussion and counterexamples in \S \ref{sec:indep_steephsteep} (including Appendix \ref{appendix:example2.2})
show the logical independence between both notions.

Once this is clarified, it is crucial the interplay between Cauchy temporal functions and $h$-steepness.
 The following excerpt of  Theorem \ref{thm:GHnulldistance-conborde} is representative. 

\begin{theorem}\label{thm_intro:GHnulldistance-conborde}
	Let $(\bM, g)$ be a spacetime-with-timelike-boundary and $\tau: \bM \to \mb R$ be a function:
	\begin{enumerate}[label=(\roman*)]
		\item If $\tau$ is $h$-steep for a complete Riemanian metric $h$,  then $\tau$ is a time function and $\hat{d}_\tau$ is complete (and (ii), (iii) apply).
		\item If $\tau$ is a time function such that $(\bM, \hat{d}_{\tau})$ is a complete metric space, then $\tau$ is a Cauchy time function. In particular, $(\bM, g)$ is globally hyperbolic
        (and (iii) apply).
		\item If $(\bM, g)$ is globally hyperbolic, then there exists a steep and $h$-steep (with respect to a complete Riemannian metric $h$) adapted (to the boundary) Cauchy  temporal function $\tau$. 
	\end{enumerate}
\end{theorem}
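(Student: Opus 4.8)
\emph{Strategy.} The three items form a single implication chain: the conclusion of (i) is exactly the hypothesis of (ii), whose conclusion is in turn the hypothesis of (iii); so it suffices to prove the three implications separately. The plan is to transplant the boundaryless equivalence between $h$-steepness, completeness of the null distance, and global hyperbolicity (\cite{GHnulldistance}) to the spacetime-with-timelike-boundary setting, the genuinely new difficulty being the control of causal curves and of $\hat d_\tau$ near the timelike boundary. For (i), I would first note that $h$-steepness makes $\tau$ strictly increase along every future-directed causal curve, so $\tau$ is a time function. The key point is the comparison $\hat d_\tau \ge d_h$, where $d_h$ is the Riemannian distance of $h$: on each future (resp. past) causal segment of a piecewise causal curve the null-length contribution equals the variation of $\tau$, which by $h$-steepness dominates the $h$-length of that segment; summing over segments and taking the infimum over piecewise causal competitors gives $\hat d_\tau(p,q)\ge d_h(p,q)$. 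Therefore every $\hat d_\tau$-Cauchy sequence is $d_h$-Cauchy and, by completeness of $h$, converges in $(\bM,d_h)$. Since $\hat d_\tau$ generates the manifold topology (Section~\ref{sec:temporalfunctions}) and so does $d_h$, the $d_h$-limit is also the $\hat d_\tau$-limit, whence $(\bM,\hat d_\tau)$ is complete.

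For (ii), I would use that a time function is Cauchy iff $\tau\to\pm\infty$ along every inextendible causal curve. Arguing by contradiction, let $\gamma:[0,b)\to\bM$ be future-inextendible with $\tau\circ\gamma$ bounded above by $T$. For $s_n\uparrow b$ the single causal curve $\gamma$ is an admissible competitor, so $\hat d_\tau(\gamma(s_n),\gamma(s_m))\le |\tau(\gamma(s_n))-\tau(\gamma(s_m))|$; as $\tau\circ\gamma$ is monotone and bounded, this sequence is $\hat d_\tau$-Cauchy and, by completeness, converges to some $x\in\bM$. Moreover $\hat d_\tau(\gamma(s),x)\le (T-\tau(\gamma(s)))+\hat d_\tau(\gamma(s_n),x)\to 0$ as $s\to b$, so the whole curve converges to $x$ in the manifold topology; thus $\gamma$ is extendible, a contradiction. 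Hence $\tau$ is Cauchy, and a spacetime-with-timelike-boundary carrying a Cauchy time function is globally hyperbolic (the boundary analogue of the classical statement, \cite{zepp:structure}).

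For (iii) the content is existence. I would start from the splitting theorem for globally hyperbolic spacetimes-with-timelike-boundary (\cite{zepp:structure}) to obtain an adapted Cauchy temporal function $\tau_0$, and then upgrade it. Temporality already yields the orthogonal decomposition $T\bM=\Span(\nabla\tau)\oplus\ker d\tau$; rescaling $\tau_0$ as in the local Lipschitz-up-to-rescaling discussion and following the steepening technique of \cite{muller-sanchez}, one arranges $g(\nabla\tau,\nabla\tau)\le -1$ (steepness) while keeping it adapted and Cauchy. Finally, taking $h$ to be the Wick-rotated metric built from $\tau$ --- complete precisely because $\tau$ is Cauchy, by Theorem~\ref{intro:thm_A} --- gives $h$-steepness for a complete $h$; that steepness and $h$-steepness can be met simultaneously here rests on their compatibility for Cauchy temporal functions (Section~\ref{sec:indep_steephsteep}).

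The hard part will be the boundary. In (ii), the delicate step is promoting $\hat d_\tau$-convergence of $\{\gamma(s_n)\}$ to convergence of the whole inextendible curve in $\bM$ and identifying the limit as an endpoint; this needs the compatibility of $\hat d_\tau$ with the manifold-with-boundary topology and care for curves limiting onto the boundary. In (iii), guaranteeing all of \emph{adapted}, \emph{steep}, \emph{$h$-steep}, \emph{Cauchy}, and \emph{temporal} for a single function is the construction-heavy part, since steepness and $h$-steepness are a priori independent (Section~\ref{sec:indep_steephsteep}) and become compatible only through the Cauchy temporal structure.
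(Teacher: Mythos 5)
Your items (i) and (ii) are correct and follow essentially the paper's own route. For (i), the paper reduces to the global anti-Lipschitz estimate $\tau(q)-\tau(p)\geq d_h(p,q)$ of \eqref{e_hsteep_is_time} and then cites \cite[Thm 1.6]{AllenBurtscher2022Properties} for completeness of $\hat{d}_\tau$ (noting the argument is purely metric, hence extends to timelike boundary); your segment-by-segment comparison $\hat{d}_\tau\geq d_h$ is exactly the content of that citation, unpacked. For (ii), the paper runs the same contradiction with an inextendible causal curve of bounded $\tau$, following \cite[Thm 4.1]{GHnulldistance}; your caveat that one must promote $\hat{d}_\tau$-convergence of $\{\gamma(s_n)\}$ to convergence in the manifold topology is the right one to flag (it is where definiteness of $\hat{d}_\tau$, which makes it induce the manifold topology \cite[Prop.~3.5]{sormanivega}, enters).

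The genuine gap is in (iii), and it is not a matter of ``construction-heavy detail'': your closing step is false. You propose to obtain $h$-steepness by taking $h$ to be the Wick-rotated metric of a steep adapted Cauchy temporal function, claiming $g_W^\tau$ is ``complete precisely because $\tau$ is Cauchy, by Theorem~\ref{intro:thm_A}''. That theorem asserts the opposite direction: completeness of $g_W^\tau$ is \emph{equivalent} to $\tau$ being $h$-steep for a complete $h$, and this \emph{implies} Cauchyness; Cauchyness alone does not give Wick-completeness. The paper's own counterexample, \cite[\S 6.4]{sanchez:slicings} (invoked in \S\ref{sec:indep_steephsteep} and Remark~\ref{remark:g_w-completa_integralcurrent}(2)), is a globally hyperbolic spacetime with a \emph{steep} Cauchy temporal function $t$ with unit gradient whose null distance $\hat{d}_t$ is incomplete, hence -- by item (i) of the very theorem, or Theorem~\ref{intro:thm_A} -- whose Wick-rotated metric is incomplete. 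So the function produced by your first two steps can fail your last step. A secondary problem: ``rescaling $\tau_0$ \dots one arranges $g(\nabla\tau,\nabla\tau)\leq -1$'' does not work either, since composing with an increasing diffeomorphism multiplies $|\nabla\tau_0|$ by a factor constant on each level set and cannot repair a slice on which $|\nabla\tau_0|$ has infimum zero (cf.\ the remark following Example~\ref{ex:h-steep_no-steep}). Finally, appealing to ``compatibility for Cauchy temporal functions'' from \S\ref{sec:indep_steephsteep} is circular: that section proves only \emph{independence}; the compatibility statement \emph{is} item (iii).

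What the paper actually does (Appendix~\ref{appendix:thm_hsteep}) is rerun the M\"uller--S\'anchez construction \cite{muller-sanchez} from scratch for the prescribed complete $h$: at every stage the constants multiplying the cone functions are enlarged so that each building block is simultaneously steep and $h$-steep on the relevant causal slab (using that $c\tau$ can be made $h$-steep on compacta, and that a sum of temporal functions with one $h$-steep summand remains $h$-steep), and the globalization proceeds by fat cone coverings exactly as in the steep case. Adaptedness to $\partial M$ is then obtained not on $\bM$ directly but by passing to the double manifold $\bM^d$ with reflection $r$, extending $g$ and $h$ symmetrically ($h^d=h+r^*h$), building a steep and $h^d$-steep Cauchy temporal function $\tau^d$ there, symmetrizing $\bar\tau=\tau^d+r^*\tau^d$, and restricting to $\bM$, where $r$-invariance forces $\nabla\tau$ to be tangent to $\partial M$. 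None of this machinery can be bypassed by the Wick-rotation shortcut you suggest.
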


\smallskip

\noindent We emphasize that, in particular, item (iii) means that, anyway,
\begin{quote}
{\em   globally hyperbolic spacetimes admit Cauchy temporal functions (and, thus, global orthogonal splittings) adapted to all the global issues: the null-distance ($h$-steepness),  Nash-type isometric embeddings (steepness) and the possible existence of  $\partial M$ ({\em adaptability to the boundary}).    
}\end{quote}
This is proven in Appendix \ref{appendix:thm_hsteep} by revisiting the procedures to obtain the orthogonal splittings developed by one of the authors and his coworkers \cite{BernalSanchez2005Splitting, muller-sanchez, zepp:structure}.

In \S \ref{sec:nulldistance}, we apply this background to show how the null distance codifies time and the conformal structure for locally anti-Lipschitz functions (Theorem \ref{thm:isometrias_distancia_nula}), complementing a previous result by Sakovich and Sormani in \cite{Sakovich2023encodescausality} on the cosmological time $\tau_{\hbox{\small{cosm}}}$.  

\smallskip
\smallskip
\noindent {\bf \S \ref{sec:g_W} Wick-rotated Riemannian geometry}. The existence of a temporal function $\tau$ in stably causal spacetimes (\cite{miguel2005revision}, see also Remark \ref{remark:weak_temporal}(b) below) has two interesting effects (see \S \ref{subsec:conformally_invariant_g_W}): \begin{itemize}
    \item (a) To select a representative $g^\tau$ of the conformal class  of $g$, so that the $g^\tau$-gradient of $\tau$ is unit (then, one can focus on  conformally invariant properties). 
 \item{(b)}  The splitting $\nabla \tau \oplus (\nabla 
\tau)^\perp$ of the tangent bundle  which permits to define a Wick-rotated Riemannian metric for any conformal representative of $g$; in particular, the canonic one $g^\tau_W$ associated with the representative $g^\tau$. 
\end{itemize}
The splitting is widely  strenghtened when $\tau$ is Cauchy, as it becomes  a splitting $\R\times \Sigma$ of the manifold (orthogonal for both $g$ and $g_W^\tau$). As noticed in \S \ref{subsec:4_2}, $\tau$ is automatically $h$-steep (indeed unit) for  $g^\tau_W$ and the latter is not necessarily complete but (Theorem \ref{intro:thm_A}):
\begin{quote}
     $g_W^\tau$ is complete if and only if $\tau$ is $h$-steep for a complete Riemannian metric (and, thus, the spacetime is globally hyperbolic). 
\end{quote}
Apart from yielding a new characterization of global hyperbolicity (Corollary \ref{coro:wick-rotated_globally hyperbolic}), this  will be the key for our progress on convergence later. Indeed, we will apply convergence for {\em complete} Riemannian manifolds in order to study convergence for globally  hyperbolic manifolds, as the latter admit complete $h$-steep functions. Moreover, our results also suggest the extension to the case with boundary of the consequences on integral currents in the study by Allen and Burtscher \cite{AllenBurtscher2022Properties}, see Remark \ref{r_correccion}.


Two further relevant issues are also studied. The first one (\S \ref{subsec:4_3}) is the fact that, even though the null distance  $\hat d$ codifies causality in this setting, the Wick-rotated distance $d_W^\tau$ does not (see the counterexample in Appendix \ref{app:no_codifica}). 
We emphasize that a small modification in the construction of  $d^\tau_W$ preserves it (Proposition \ref{prop:d_W_encoding}), suggesting that our approach preserves the full information about $\tau$. 
The second issue concerns the proof of stability of $h$-steep functions   (including a redefinition for a subtlety in the adaptability to the boundary), \S \ref{subsec:4_4},  which completes our study of time functions from the global viewpoint (Theorem \ref{thm:stability}).

\smallskip
\smallskip
\noindent {\bf \S  \ref{sec:convergence_semi}, \ref{sec:convergence_globally_hyperbolic} Cheeger-Gromov convergence.} Section \ref{sec:convergence_semi} has a pivotal importance for going from Riemannian to semi-Riemannian Cheeger-Gromov convergence. First, a brief summary of the Riemannian setting and basic existence and uniqueness results are provided, \S \ref{subsec:5_1}. The semi-Riemanian extension in \S \ref{subsec:convergence_semi-riemann} starts  with a discussion of the difficulties (\S \ref{subsec:5_2_1}),   including two illustrative examples (Examples \ref{ejemplo1}, \ref{ejemplo2}). 

These difficulties include first the non-compactness of the isometry group. This is ruled satisfactorily by the anchor at the basepoint, which motivates the definition of convergence and yields the first properties in \S \ref{subsec:5_2_2}. 

A second difficulty occurs because, given a Cheeger-Gromov sequence and a candidate to  limit, there is no norm   to compare the elements of the sequence with the limit (say, no quasi-isometries are defined).  
However, assuming that the convergence is at least $C^2$ (see footnote \ref{foot:Ambrose}), the anchors provide natural auxiliary Riemannian metrics (Definition \ref{def:anchor}), which are sufficient for the estimates on convergence 
(see the  lemmas in $\S$ \ref{subsec:5_2_3}).

This leads to Theorem \ref{teor:uniqueness_semi-riemannian}, a fundamental result providing a local isometry around the basepoints of any two possible limits. Going from this semi-local result to a global one, a third difficulty appears, as no equivalences of Hopf-Rinow theorem stands. Anyway, the global result in aforementioned Theorem \ref{coro_simplyconnected} holds (proof in \S \ref{subsec:5_2_3}).

In Section \ref{sec:convergence_globally_hyperbolic}, we develop the announced convergence for globally hyperbolic spacetimes by using a stronger anchoring, Cauchy temporal functions.
This is done in two steps:
\begin{itemize}
    \item 
Prescribe a  (unit) timelike vector field $T_i$ so that the corresponding Wick-rotated metric is complete ({\em Wick-complete anchored spacetime}), and assume convergence for the pullbacks of $T_i$'s too (this leads to consider at least $C^1$-convergence), see \S \ref{sec:convergence_observed_spacetimes}.
\item Prescribe   $h$-steep functions $\tau_i$ adapted to the boundary  ({\em h-steep anchored spacetimes}) and $C^0$-convergence, and choose the conformal representative of the metric associated with $\tau_i$, see $\S \ref{subsec:6_2}$. 
On the one hand, this will imply the convergence of the vector fields $\nabla^i \tau_i$ and, on the other,  $h$-steepness will yield the completeness of the Wick-rotated metrics, then making  the previous case applicable. Notice that the convergence is split in   the conformal structure and lapse. 
\end{itemize}
In both cases, the convergence of the Wick-rotated metrics 
is enough for the 
uniqueness of limits (Proposition \ref{prop:uniqueness_observed_spacetimes}, Remark \ref{remark:convergence_conformal}), 
and one has (see the proof in \S \ref{subsec:6_2}):

\begin{theorem}[Cheeger-Gromov convergence of $h$-steep anchored structures]\label{thm:convergence_uptodiffeo}
$\hbox{}$

  \noindent  Let $\{(\bM_i, g_i, \tau_i, p_i)\}_{i \in \N}$ be a sequence of h-steep anchored smooth spacetimes. Assume that
	\begin{enumerate}[label=(\roman*)]
		\item the sequence of pointed complete Riemannian manifolds $(\bM_i, g^{\tau_i}_W, p_i)$ converges in $C^k$, $k\geq 0$, to a pointed complete Riemannian manifold $(\bM,h,p)$ in the sense of Definition \ref{def:riemann_convergence} with embeddings $\phi_i : U_i \subset \bM \to \bM_i$, and
		\item the functions $\tau_i \circ \phi_i$ converge in $C^{k+1}$ to a smooth function $\tau$ on $\bM$.
	\end{enumerate}
	Then, the spacetimes $(\bM_i, g_i^{\tau_i}, \tau_i, p_i)$ converge in $C^k$ as Wick-complete anchored spacetimes to the (necessarily globally hyperbolic) spacetime $(\bM,g,\tau,p)$ (in the sense of Definition \ref{def:lorentz_convergence}), where $g = h - 2d\tau^2$ and  $\tau$ is an adapted $h$-steep temporal function for $g$. Moreover, the limit is unique up to isometries.
\end{theorem}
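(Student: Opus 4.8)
The plan is to reduce the Lorentzian convergence to the two hypotheses through the pointwise algebraic Wick-rotation identity. Recall that for an $h$-steep anchored spacetime the conformal representative $g_i^{\tau_i}$ is normalized so that $\tau_i$ is unit for the canonical Wick-rotated metric, and the two are related by $g_W^{\tau_i} = g_i^{\tau_i} + 2\,d\tau_i^2$, equivalently $g_i^{\tau_i} = g_W^{\tau_i} - 2\,d\tau_i^2$ (compare $g = h - 2\,d\tau^2$ in the statement). I would work entirely with the fixed embeddings $\phi_i \colon U_i \subset \bM \to \bM_i$ supplied by hypothesis (i). That hypothesis gives $\phi_i^* g_W^{\tau_i} \to h$ in $C^k$ on compact sets, while hypothesis (ii) gives $\tau_i \circ \phi_i \to \tau$ in $C^{k+1}$, hence $d(\tau_i \circ \phi_i) \to d\tau$ and $d(\tau_i \circ \phi_i)^2 \to d\tau^2$ in $C^k$. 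Pulling back the Wick identity and letting $i \to \infty$,
$$\phi_i^* g_i^{\tau_i} = \phi_i^* g_W^{\tau_i} - 2\,d(\tau_i \circ \phi_i)^2 \longrightarrow h - 2\,d\tau^2 = g \qquad \text{in } C^k.$$
Moreover the anchor field $T_i = \nabla^{g_i^{\tau_i}} \tau_i$ coincides with the $g_W^{\tau_i}$-gradient of $\tau_i$ (both equal the unit timelike vector metrically dual to $d\tau_i$ in a frame adapted to $\nabla\tau_i$), so $\phi_i^* T_i = (\phi_i^* g_W^{\tau_i})^{-1} d(\tau_i\circ\phi_i) \to h^{-1} d\tau = \nabla^h\tau$ in $C^k$. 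This is exactly the data required by Definition \ref{def:lorentz_convergence} for $C^k$ convergence of Wick-complete anchored spacetimes.

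The substantive step is to verify that the limit $(\bM, g, \tau, p)$ is itself a \emph{bona fide} Wick-complete anchored spacetime. Since each $\tau_i$ is unit for $g_W^{\tau_i}$, one has $(g_W^{\tau_i})^{-1}(d\tau_i, d\tau_i) = 1$; as the nondegenerate metrics $\phi_i^* g_W^{\tau_i}$ converge in $C^0$ (recall $k \geq 0$), their inverses converge on compacta, and passing to the limit yields $h^{-1}(d\tau, d\tau) = 1$. Thus $d\tau$ is nowhere vanishing and $\tau$ is unit, hence $h$-steep, for $h$; equivalently, $h$ is precisely the canonical Wick-rotated metric $g_W^\tau$ of the pair $(g, \tau)$. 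Choosing an $h$-orthonormal coframe with $d\tau$ as its first covector, the identity $g = h - 2\,d\tau^2$ exhibits $g$ as Lorentzian with $\nabla^g \tau$ unit timelike, so $\tau$ is a temporal function for $g$. Finally, $h = g_W^\tau$ is complete by hypothesis, so Theorem \ref{intro:thm_A} shows that $\tau$ is $h$-steep for a complete Riemannian metric and that $(\bM, g)$ is globally hyperbolic; this gives the ``necessarily globally hyperbolic'' clause and confirms that $(\bM, g, \tau, p)$ is Wick-complete anchored.

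Uniqueness up to isometries then follows from Proposition \ref{prop:uniqueness_observed_spacetimes} (see also Remark \ref{remark:convergence_conformal}): two candidate limits have complete pointed Wick-rotated Riemannian metrics that must be isometric by the uniqueness of Riemannian Cheeger-Gromov limits, with the isometry matching basepoints and anchors; the limit temporal functions correspond under this isometry, being the common $C^{k+1}$ limits of $\tau_i \circ \phi_i$, and therefore the Lorentzian metrics $g = h - 2\,d\tau^2$ agree. The main obstacle I anticipate is not this tensorial bookkeeping, which is routine, but ensuring in the spacetime-with-boundary case that the limit $\tau$ remains \emph{adapted} to $\partial\bM$; rather than attempt a direct limiting argument for the boundary condition, I would invoke the stability of (adapted) $h$-steep functions established in Theorem \ref{thm:stability}.
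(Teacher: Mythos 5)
Your main line reproduces the paper's proof of Theorem \ref{thm:convergence_uptodiffeo} (which itself just runs the argument of Proposition \ref{prop:convergence_conformal} through the pullbacks $\phi_i$): pull back the Wick identity $g_i^{\tau_i}=g_W^{\tau_i}-2d\tau_i^2$, obtain $\phi_i^*g_i^{\tau_i}\to h-2d\tau^2=g$ in $C^k$ on compacta, show $|d\tau|_h=1$ in the limit (the paper does this via the two-sided $\epsilon$-estimates; your route through convergence of the inverse metrics is a slightly more direct version of the same computation), conclude that $g$ is Lorentzian with $\tau$ temporal and steep with respect to a complete Riemannian metric, hence globally hyperbolic, and get uniqueness from Proposition \ref{prop:uniqueness_observed_spacetimes} together with Remark \ref{remark:convergence_conformal}, exactly as the paper does. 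Three cosmetic slips: $|d\tau|_h=1$ gives that $\tau$ is $(h/2)$-steep, not $h$-steep (for a null $v$ one has $d\tau(v)=\|v\|_h/\sqrt{2}$), which is immaterial since $h/2$ is complete; the anchor satisfies $\nabla^{g^{\tau_i}}\tau_i=-\nabla^{W}\tau_i$ (as noted in the proof of Lemma \ref{lemma:g_w-completa}), a sign your formula drops but which is invisible to $g_T=g+2T^\flat\otimes T^\flat$; and Theorem \ref{intro:thm_A} is stated for an \emph{onto} temporal function, an hypothesis you do not verify for the limit $\tau$ --- cleanest is to cite Theorem \ref{thm:GHnulldistance-conborde}(i)--(ii) directly from the $(h/2)$-steepness, as the paper does.

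The one genuine defect is your treatment of adaptedness to $\partial M$. Theorem \ref{thm:stability} does not do what you want: it is not a statement of stability of adaptedness under convergence of metrics, but a construction that, for a \emph{single fixed} pair $(g,\tau)$ with $\tau$ adapted, produces \emph{one} wider-coned metric $g'=g-\alpha\, d\tau^2$ for which the same $\tau$ remains adapted. Indeed, \S\ref{subsec:4_4} states explicitly that adaptability to the boundary ``is not a stable property,'' and the footnote in the proof of Theorem \ref{thm:stability} warns that intermediate metrics $g<g''<g'$ need not preserve it; the metrics $\phi_i^*g_i^{\tau_i}$ in your sequence are not of the special form $g-\alpha\, d\tau^2$ relative to the limit, so nothing in that theorem transfers the boundary condition from the $(g_i,\tau_i)$ to $(g,\tau)$. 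The correct argument is precisely the direct limiting one you declined to attempt, and it is easy: the embeddings $\phi_i$ preserve the boundary, the vectors $\phi_i^*\bigl(\nabla^{g_i^{\tau_i}}\tau_i\bigr)$ are tangent to $\partial M$ by hypothesis and converge (at least $C^0$, from the $C^k$ convergence of the metrics and the $C^{k+1}$ convergence of the $\tau_i\circ\phi_i$) to $\nabla^g\tau$, and tangency to $\partial M$ is a closed condition in $T\bM|_{\partial M}$; hence $\nabla^g\tau$ is tangent to $\partial M$. This is exactly how the paper disposes of the adaptedness clause in Proposition \ref{prop:convergence_conformal}.
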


Then, as emphasized in $\S \ref{subsec:6_3}$,     existence of Lorentzian limits is achieved by using the Riemannian criteria for Wick-rotated metrics (Corollary \ref{thm:compacidad_globalmente_hiperbolicos}).

\smallskip
\smallskip
Finally, we show  that the  simple structure of a common orthogonal spliting for all the metrics can be used from a general viewpoint. Indeed, the splitting associated with a Cauchy temporal function $\tau$ permits to see all the $h$-
steep anchored spacetimes with diffeomorphic Cauchy hypersurfaces as spacetimes on the same orthogonal product $\R\times \Sigma$, where $\tau$ is identifiable with the natural projection on $\R$ (Proposition \ref{lemma:difeomorfismo_RxSigma}). Indeed, once $\tau$ is chosen, for each diffeomorphism $\tau^{-1}(0)\to \Sigma$ one obtains an isometry $M\to \R\times \Sigma$.  
From a practical viewpoint, this serves to obtain sufficient conditions of convergence, as discussed around
Example~\ref{example:de-Sitter}.

\begin{note}[Gromov-Hausdorff convergence]  \label{note}  
{\em As a final prospective observation,  here we have restricted to Cheeger-Gromov convergence, however, we expect that the underlying ideas can be used for Gromov-Hausdorff one too. Indeed, parts of the present article might serve as a preliminary study  for this, especially the study of steep functions. 

In the classical setting, the Gromov-Hausdorff distance $d_{GH}(X,Y)$ measures how far two compact Riemannian manifolds  (or, in general, two metric spaces) $X$ and $Y$ are from being isometric. It is defined by taking all the possible isometric embeddings $F: X\rightarrow Z$, $G: Y\rightarrow Z$ in a compact metric space $Z$, and letting $d_{GH}(X,Y)$ be the infimum of $d_Z(F(X),G(Y))$, the  Hausdorff  distance in $Z$ between the images $F(X)$ and $G(Y)$. Noticeably, Nash theorem permits to find isometric embeddings to compare any two $C^3$-Riemannian manifolds in some $\R^N$.

Gromov-Hausdorff distance has already been  studied by Sakovich and  Sormani    using a particular Lo\-rentzian setting \cite{SakovichSormani2024Variousnotions}.
However, the fact that steep temporal functions permit  to construct isometric embeddings in $\LL^N$ for large $N$ suggests  prospective procedures, namely:  
(A)~consider Gromov-Hausdorff distance and convergence of the Riemannian metrics obtained by Wick rotating the globally hyperbolic ones with a temporal function, or 
(B) define a Lorentz Gromov-Hausdorff distance in the class of  $C^3$-spacetimes with timelike boundary admitting a steep temporal function, by taking all the isometric embeddings in $\LL^N$ and computing the Hausdorff distance for the associated Euclidean space $\R^N$. 
Anyway, these approaches might present a number of issues  (for example, the dependence of convergence with $\tau$ in the case (A), or the interplay between $\LL^N$ and $\R^n$ in (B)) and it is postponed for future work.  
}\end{note}

\section{Preliminaries on spacetimes, causality and timelike boundaries
}\label{sec:preliminaries}

We begin with a basic review of some notions from Lorentzian geometry that we will use throughout the paper. For further background and consistent conventions, we refer the reader to \cite{Oneill,MinguzziSanchez}. 
Let $M$ be a smooth manifold of dimension $n+1$. A Lorentzian metric $g$ on $M$ has signature $(-,+,...,+)$, and classifies a vector $v \in TM$ as \textit{timelike, null} or \textit{spacelike} according to whether $g(v,v)$ is negative, zero, or positive, respectively. Lightlike vectors are defined as nonzero null vectors. Timelike and lightlike vectors are collectively referred to as \textit{causal}. A \textit{time-orientation} is a continuous choice of a future cone at each point of $M$. A \emph{spacetime} $(M,g)$ is a time-oriented Lorentzian manifold in which every  causal vector can be clasified as either future-pointing or past-pointing. This notion naturally extends to curves: we write $p\ll q$ (resp. $p<q$) if there exists a future-pointing timelike (respectively, causal) curve from $p$ to $q$. Then, the {\em chronological future} of a point $p$ is defined as $I^+(p):=\{q\in M: p\ll q\}$, the {\em causal future} as $J^+(p):=\{q\in M: p\leq q\}$, where $p\leq q$ means either $p<q$ or $p=q$, and the {\em horismos} of $p$ as $E^+(p):=J^+(p)\setminus I^+(p)$. Given two points $p, q\in M$, we define the {\em causal diamond} $J (p,q):=J^{+} (p) \cap J^{-} (q)$ and the {\em chronological diamond} $I(p,q) := I^{+} (p) \cap I^{-} (q)$).
We also write $(p,q) \in I^{+}$ if $p \ll q$ (resp. $(p,q) \in J^{+}$ if $p \leq q$).
Consistently, for any subset $S\subset M$, we define $I^+(S):=\cup_{p\in S} I^+(p)$. If $U\subset M$ is an open subset regarded as a spacetime in its own right, then $I^+(S,U)$ denotes the chronological future of $S\subset U$, and similar conventions apply for the causal and horismos relations. If $g,g'$ are Lorentzian metrics on $M$, we write $g<g'$ to indicate that the causal cones of $g$ are contained within the timelike cones of $g'$.  
 
We will consider {\em smooth} regularity to mean that the different elements we are dealing with are as regular as required by the context. Typically we will consider $C^r$ convergence for the metrics, which implicitly requires $C^{r+1}$ for the manifold and for temporal functions when regarded as anchors. Except when otherwise is specified, $C^{1,1}$, is sufficient and, consistently, lower regularity is allowed for the Levi-Civita connection and for any causal curve\footnote{In general $H^1$ is the natural regularity for the convergence of curves, and it is equivalent to locally Lipschitz when restricted to the class of causal curves, see \cite[Sections 2.3 and 2.4]{zepp:structure}.} 
  $\gamma$; therefore, when necessary, the causal character of its velocity $\gamma'$ is understood almost everywhere (a.e.). Notice that if $g$ is $C^k$,  then $C^k$ causal curves suffice to define chronological and causal futures and pasts, however, in the case of spacetimes with timelike boundaries considered below, locally Lipschitz causal curves must be used to compute $J^\pm (p)$, even if the spacetime with boundary is $C^\infty$; see \cite[Appendix]{zepp:structure}.

We will follow the standard conventions of the well-known causal ladder of spacetimes, as presented in \cite{MinguzziSanchez}. 
In particular, a spacetime is said to be \textit{causal} if it contains no 
closed causal curves, and {\em stably causal} if it admits a time function -- a condition that admits a nontrivial characterization, which in turn justifies the terminology. More precisely, our interest will focus on the following classes of functions.

\begin{definition}\label{def:generalizedtimefunction}
A {\em generalized time function} is a (non-necessarily continuous) function $\tau : M \to \mathbb{R}$ that is strictly increasing along all 
future-directed causal curves. 

When, additionally,
\begin{enumerate}[label=(\roman*)]
\item $\tau$ is continuous, then $\tau$ is  {\em time function}.

\item {$\tau$ is smooth (at least $C^1$), then $\tau$ is a {\em smooth time function}.}

\item  $\tau$ is smooth and its gradient  $\nabla \tau$ is timelike (and so necesssarily  past-directed),  then $\tau$ is a 
{\em temporal} function.  

\item $\tau$ is temporal and $g (\nabla \tau, \nabla \tau) \leq -c$ for some $c>0$,\footnote{\label{foot:steepglobal}In the definition introduced in \cite[Thm 1.1]{muller-sanchez}, $c$ is chosen to be $1$ (otherwise, multiplying by a constant would ensure this). Here, it is convenient to allow $c>0$  to stress the global character of this notion as well as its independence with $h$-steepness below.} then $\tau$ is {\em steep temporal}.
\end{enumerate}
\end{definition}

We will be interested in spacetimes $(M,g)$ with a good causal behaviour. The best one occurs when it is {\em globally hyperbolic}, that is,  $(M,g)$ is causal and 
the causal diamonds $J(p,q)$ are compact for all $p,q\in M$. 
A celebrated result, originally obtained by Geroch at the topological level and later improved by Bernal and one of the authors at the smooth and orthogonal levels,  states that any globally hyperbolic spacetime $(M,g)$ admits a \textit{Cauchy orthogonal splitting}, that is, $(M,g)$ is isometric to $(\mathbb{R} \times \Sigma, -\Lambda d\tau ^2 + \sigma_{\tau})$, where $\tau$ is a {\em Cauchy temporal function} (i.e., a temporal function as above, whose  slices $\tau=$ constant are  Cauchy hypersurfaces),  $\Lambda$ is a smooth positive function and $\sigma_{\tau}$ is a Riemannian metric on each slice $\{ \tau \} \times \Sigma$ varying smoothly with $\tau$, \cite{BernalSanchez2005Splitting}. Recall that a {\em Cauchy hypersurface} is a subset $S\subset M$ that is crossed exactly once by any inextensible timelike curve; in general, these conditions imply that $S$ is  an achronal  topological hypersurface \cite[Lemma 14.29]{Oneill}; anyway, in the splitting above, the slices of $\tau$ are smooth, acausal and spacelike. 

A different issue, studied by M\"uller and one of the authors in \cite{muller-sanchez},
is  the Nash-type isometric embeddability of a   spacetime in Lorentz-Minkowski spacetime  $\mathbb{L}^N$,  for some big enough dimension $N$. This is possible (for $C^3$ metrics) if and only the spacetime  admits a steep temporal function. A globally hyperbolic spacetime not only satisfies this property but also admits a Cauchy steep temporal function, which can be used for the splitting above.

The case of spacetimes $(\bM,g)$ with timelike boundary was first studied in \cite{tesisdidier, galloway2014notes}. In the systematic study carried out by Ak\'e et al.  \cite{zepp:structure}, it is shown that all the causality  notions can be directly extended to them just taking into account the issue on regularity for the causal curves pointed out above.  
Remarkably, both  Bernal-S\'anchez splitting and M\"uller-S\'anchez embeddability above still hold, even though new techniques were required to obtain Cauchy temporal functions $\tau$ adapted to the boundary. 
Next, we introduce these spacetimes following
 \cite{zepp:structure}. 

\begin{definition}\label{def:spacetimeTLB}
	A \emph{Lorentzian manifold with timelike boundary} $(\bM,g)$, $\bM = M \cup \partial M$, is a smooth  manifold with boundary $\bM$ endowed with a Lorentzian metric $g$ (which is also smooth on the boundary $\partial M$) such that the pull-back metric $i^*g$ (induced by the natural inclusion $i:\partial M \hookrightarrow M$) defines a Lorentzian metric on the boundary. A \emph{spacetime-with-timelike-boundary} is a time-oriented Lorentzian manifold with timelike boundary. If, in addition, 
	it is causal and all  $J(p,q),\; p,q\in \bM$, are compact, then it is called a {\em globally hyperbolic spacetime-with-timelike-boundary}.  
\end{definition}
Remark that the interior $M$ of a globally hyperbolic spacetime-with-timelike-boundary  is intrinsically globally hyperbolic (with the induced metric) if and only if $\partial M$ is  empty; indeed, the points of $\partial M$ can be identified as the naked singularities \cite[Appendix A]{zepp:structure}. The following theorem (\cite[Theorem 1.1]{zepp:structure}, see also \cite[\S 5.4.2]{sanchez:slicings} and \cite{muller-sanchez}) summarizes the   
aforementioned results  for the case with boundary as well as other properties.

\begin{theorem}\label{theorem:splitting_boundary}
	Any globally hyperbolic spacetime-with-timelike-boundary $(\bM, g)$ admits a Cauchy temporal function $\tau$ {\em adapted to the boundary}, that is, so that on the boundary  $\nabla \tau$ remains tangent to $\partial M$. As a consequence, $\bM$ splits smoothly as a product $\mathbb{R} \times \bar{\Sigma}$, where $\bar{\Sigma}$ is an $(n-1)$-manifold with boundary, the metric can be written (with natural abuses of notation) as a parametrized orthogonal product
	\begin{equation}\label{eq:splitting_boundary}
		g = -\Lambda d\tau ^2 + \sigma_{\tau},
	\end{equation}
	where $\Lambda: \mathbb{R}\times \bar{\Sigma} \to (0,\infty)$ is a {smooth} positive function, $\sigma_\tau$ is a Riemannian metric on each slice $\{\tau\} \times \bar{\Sigma}$ varying smoothly with $\tau$, and these slices are spacelike Cauchy hypersurfaces-with-boundary. 
	Moreover, the interior $M$ of $\bM$ is always causally continuous, the boundary $\partial M$ is a (possibly non-connected) globally hyperbolic {spacetime (without boundary)} and:
	\begin{enumerate}[label=(\alph*)]
		\item the restriction of $\tau$ to $M$ extends the known orthogonal splitting of globally hyperbolic spacetimes to this class of causally continuous spacetimes without boundary,
		\item the restriction of $\tau$ to $\partial M$ provides a Cauchy temporal function for the boundary whose levels are acausal in $\bM$.
	\end{enumerate}
	
	Finally, $\bM$ admits a steep temporal function (which can be chosen Cauchy and adapted to the boundary and satisfying $g(\nabla \tau, \nabla \tau) \leq -c$ for any prescribed $c > 0$ so that the splitting \eqref{eq:splitting_boundary} holds with $\Lambda \leq c$) and, thus, when the metric is $C^3$, it can be isometrically embedded in Lorentz-Minkowski $\mathbb{L}^N$ for some $N\in \mathbb{N}$. What is more, when $(\bM,g)$ has a $C^3$ metric: 
	\begin{itemize}
		\item[(i)] it is globally hyperbolic if and only if each metric in its conformal class is isometrically embeddable in some $\LL^N$, and 
	
	\item[(ii)] it is stably causal if and only if one representative of its conformal class is isometrically embeddable in some $\LL^N$.
	
	\end{itemize}
\end{theorem}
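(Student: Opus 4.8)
The plan is to obtain everything by upgrading the Bernal--Sánchez orthogonal splitting of boundaryless globally hyperbolic spacetimes so that it respects $\partial M$, and then to steepen and embed. Thus I would first produce a Cauchy temporal function $\tau$ with $\nabla\tau$ tangent to $\partial M$, read off the splitting $g=-\Lambda\,d\tau^2+\sigma_\tau$ from its flow, deduce the structural statements about $M$ and $\partial M$, and finally refine $\tau$ to be steep and use it to build the embedding into $\LL^N$.

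For the existence of the adapted function the guiding idea is the \emph{doubling trick}: glue two copies of $\bM$ along $\partial M$ to form a manifold $\hat M$ without boundary carrying the reflection isometry $\iota$ whose fixed-point set is $\partial M$. If $\hat M$ were a globally hyperbolic spacetime, the classical theorem would give a Cauchy temporal function $\hat\tau$, and symmetrizing it as $\tfrac12(\hat\tau+\hat\tau\circ\iota)$ would yield a $\iota$-invariant Cauchy temporal function; invariance forces $\nabla\tau$ to be tangent to the fixed set $\partial M$, and restricting to one copy gives the adapted $\tau$. The obstruction---and the genuinely new point in the boundary case---is that the doubled metric is in general only continuous across $\partial M$ (the normal derivative flips sign), so the smooth construction does not apply directly. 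I would resolve this by running the Geroch--Bernal--Sánchez machinery intrinsically on $\bM$: build the time function from volume functions associated with a measure that is symmetric under the collar reflection near $\partial M$, and carry out the Bernal--Sánchez smoothing so that the normal derivative of the smoothed function vanishes on $\partial M$. This is where I expect the \emph{main difficulty} to lie, since one must force tangency of $\nabla\tau$ to $\partial M$ while preserving both its temporal character and the Cauchy property of its levels.

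Granting $\tau$, the splitting is routine: normalizing the flow of $-\nabla\tau/g(\nabla\tau,\nabla\tau)$ gives a complete flow identifying $\bM$ with $\R\times\bar\Sigma$, $\bar\Sigma=\{\tau=0\}$; tangency of $\nabla\tau$ to $\partial M$ makes the flow preserve $\partial M$, so $\bar\Sigma$ is a manifold-with-boundary and the boundary splits as $\R\times\partial\bar\Sigma$, while setting $\sigma_\tau:=g|_{(\nabla\tau)^\perp}$ and $\Lambda:=-1/g(\nabla\tau,\nabla\tau)$ gives the orthogonal form. The structural claims follow: the interior inherits the splitting, hence is causally continuous and is globally hyperbolic precisely when $\partial M=\emptyset$ (item (a) and the naked-singularity remark); and $\tau|_{\partial M}$ is temporal for $i^*g$ with Cauchy, acausal levels, giving (b) and the global hyperbolicity of $\partial M$, which I would verify by compactness of the induced causal diamonds inherited from those of $\bM$. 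For the steep refinement I would use $g(\nabla\tau,\nabla\tau)=-1/\Lambda$, so that steepness $g(\nabla\tau,\nabla\tau)\leq -c$ is exactly an upper bound on $\Lambda$; following Müller--Sánchez I would not rescale $\tau$ (which destroys the Cauchy condition) but add a suitable family of temporal corrections depending on $\tau$ near the boundary, shrinking $\Lambda$ below any prescribed threshold while keeping $\tau$ Cauchy, temporal and adapted.

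Finally, with a steep adapted Cauchy $\tau$ and a $C^3$ metric, the isometric embedding into $\LL^N$ is the Müller--Sánchez construction: Nash-embed the spatial factor and append the timelike coordinate built from $\tau$, steepness being exactly the inequality needed for the timelike direction to dominate the spatial stretching. The equivalences (i) and (ii) then reduce to conformal invariance of global hyperbolicity and of stable causality, combined with the equivalence ``steep temporal function $\Leftrightarrow$ embeddability in $\LL^N$'' for $C^3$ metrics. For (ii), a stably causal metric admits a temporal function $\tau$, and a conformal rescaling $g'=\Omega^2 g$ with $\Omega^2\leq |g(\nabla\tau,\nabla\tau)|/c$ makes $\tau$ steep for $g'$, so one representative embeds; conversely an embeddable representative pulls back the Minkowski time to a temporal function, hence is stably causal, and stable causality is conformally invariant. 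For (i), global hyperbolicity is conformally invariant, so every representative is globally hyperbolic and thus embeds by the main part, while the converse uses the Müller--Sánchez characterization that global hyperbolicity is equivalent to every conformal representative admitting a steep temporal function.
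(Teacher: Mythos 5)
Your skeleton — double $\bM$ along $\partial M$, symmetrize a Cauchy temporal function under the reflection to force $\nabla\tau$ tangent to the fixed set, read off the splitting from the normalized gradient flow, then steepen and embed \`a la M\"uller--S\'anchez — is the right one, and you correctly identify the genuine obstruction: the naive double of $g$ is in general only continuous across $\partial M$ (smoothness fails unless the boundary is totally geodesic). For reference, the paper does not prove Theorem \ref{theorem:splitting_boundary} itself (it is quoted from \cite{zepp:structure}), but its Appendix \ref{appendix:thm_hsteep} carries out precisely this doubling strategy in the strengthened steep-and-$h$-steep setting, so a direct comparison is possible.

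The gap is in your proposed fix. Having (rightly) noted the regularity problem, you abandon the doubling and propose rerunning the Geroch and Bernal--S\'anchez constructions intrinsically on $\bM$, with a reflection-symmetric measure in a collar and a smoothing forced to have vanishing normal derivative on $\partial M$. None of this is executed, and you yourself flag it as the main difficulty: forcing tangency of $\nabla\tau$ through the smoothing while preserving the temporal character and the Cauchy property of the levels is exactly the hard technical content of \cite{zepp:structure}, so as written the central existence step is missing. The actual proof keeps the doubling and sidesteps regularity with a cone-widening trick absent from your proposal: choose a metric $g^*$ with $g<g^*$ which is a \emph{product} in a collar of $\partial M$, so that its double $g^d$ on $\bM^d$ is smooth and the reflection $r$ is a $g^d$-isometry; construct a steep Cauchy temporal function $\tau^d$ for $g^d$ by the M\"uller--S\'anchez machinery; symmetrize (the paper uses $\tau^d+r^*\tau^d$; your average works equally well) and restrict to one copy. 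Since the causal cones of $g$ lie inside the timelike cones of $g^*$, a Cauchy temporal function for $g^d$ restricts to a Cauchy temporal function for $(\bM,g)$, and $r$-invariance gives the tangency. Note also that steepness with $\Lambda\leq c$ is built in from the start via steep forward cone functions (sums of steep temporal functions remain steep), not obtained afterwards by ``temporal corrections near the boundary'' as you suggest — steepness is a global condition, and while your conformal rescaling in (ii) is fine for producing \emph{some} steep representative, it does not by itself yield the Cauchy, steep, adapted function required here. The rest of your outline (the splitting from the gradient flow, items (a), (b), the embedding, and the equivalences (i)--(ii)) follows the cited arguments correctly, though the causal continuity of $M$ and the compactness of diamonds in $\partial M$ are asserted rather than proved and need their own limit-curve arguments in \cite{zepp:structure}.
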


%

\section{Null distance: h-steepness and timelike boundaries} \label{sec:temporalfunctions}

\subsection{Sormani-Vega null distance and time functions}\label{sec:localtime_excepthsteep}

The null distance, introduced by Sormani and Vega \cite{sormanivega} in the context of spacetimes, is -- under certain conditions -- a metric  distance capable of capturing both the topological and causal structure of the manifold. Their construction extends naturally to spacetimes with timelike boundary $(\bM,g)$. We now recall the key ingredients of this construction in that setting. 

\begin{definition}\label{def:piecewise-causal-curve}
	A \emph{piecewise causal curve} $\beta : [a,b] \to \bM$ is a curve admitting a partition $a = s_0 < s_1 < \cdots < s_k = b$, such that each restriction $\beta_i = \beta |_{[s_{i-1},s_i]}$ is either a future or past directed causal curve.
\end{definition}

\noindent Notice that, here, piecewise causal curves are allowed to move forward and backward in time. Then, the null distance is defined as follows.

\begin{definition}\label{def:nulldistance}
	Let $\tau$ be a generalized time function on $(\bM,g)$. The \emph{null length (associated with $\tau$)} of a piecewise causal curve $\beta: [a,b] \to \bM$ is defined by
	\begin{equation*}\label{eq:def:null-length}
		\hat{L}_{\tau} (\beta) := \sum_{i=1}^{k} |\tau (x_i) - \tau(x_{i-1})|,
	\end{equation*}
	where $x_i = \beta (s_i)$ are the break points of $\beta$ and $a = s_0 < s_1 < \cdots < s_k = b$. 
	
	The \emph{null distance} between $p,q \in \bM$ is defined by
	\begin{equation*}\label{eq:def:nulldistance}
		\hat{d}_{\tau} (p,q) := \inf \{\hat{L}_{\tau} (\beta): \beta \text{ is a piecewise causal curve from } p \text{ to } q\}.
	\end{equation*}
\end{definition}

It is clear from the definition that $\hat{d}_\tau$ is symmetric and satisfies the triangle inequality, but positive definiteness does not hold in general. If $\tau$ is \emph{locally anti-Lipschitz} and continuous then {$(\bM, \hat{d}_\tau)$} is a conformally invariant length-metric space which induces the manifold topology (see \cite[Thm 4.6]{sormanivega} and \cite[Thm 1.1]{Burtscher2015Length}). In their study of the null distance, Burtscher and García-Heveling \cite{GHnulldistance} introduced the slightly stronger notion of \textit{weak temporal function}.

\begin{definition}\label{def:weak_temporal}
	Let $h$ be any auxiliary Riemannian metric on $\overline{M}$, and $d_h$ its associated distance. A continuous function $\tau$ is \emph{weak temporal} if every $x$ admits a neighborhood $U$ and $C \geq 1$ such that
	\begin{equation}\label{eq:weak_temporal}
		(p,q) \in J^+ \cap (U \times U)\; \Rightarrow \; \frac{1}{C} d_h (p,q) \leq \tau (q) - \tau (p) \leq C d_h (p,q). 
	\end{equation}
	If $\tau$ is required to satisfy only the first $\leq$ in (\ref{eq:weak_temporal}) we say that it is  \emph{locally anti-Lipschitz} (and $\tau$ becomes a time function).
\end{definition}

\begin{remark}[\em Local Lipschitzness and effects of rescaling]\label{remark:lipschitz_uptorescaling}
	(1) 
    Time functions are  locally Lipschitz if and only if they satisfy the last inequality in \eqref{eq:weak_temporal} for one $C>0$ \cite[Prop. 2.11]{GHnulldistance}. Moreover,
{\em     all time functions are locally Lipschitz up to rescaling}, that is, for any time function $\tau$ there exists a continuous and (strictly) increasing function $\phi :  I:=\tau(\overline{M}) \to \R$ such that $\phi \circ \tau$ is locally Lipschitz, and hence, a time function differentiable almost everywhere. See 
    Appendix~\ref{app:lipschitz_uptorescaling} for a complete proof of this fact.
	
	(2) In the definition of Cauchy time function, $\tau$ is assumed to be surjective on $\R$  by convenience, but no relevant difference occurs by dropping it and admitting a proper interval  $I \subset \R$ as the image of $\tau$. {Indeed, if $\phi: \R \rightarrow I$ is an appropriate increasing diffeomorphism,} then $\phi\circ \tau$ and $\phi^{-1} \circ \tau$  would switch from one case to the other. 	
\end{remark}

\begin{remark}\label{remark:weak_temporal}
	Some notes are in place for weak temporal functions (see \cite[\S 2.5]{GHnulldistance}):
	\begin{enumerate}[label=(\alph*)]
		\item If $\tau$ is a weak temporal function then it is locally Lipschitz, and by Rademacher's Theorem its gradient $\nabla \tau$ exists  almost everywhere. Moreover, by the locally anti-Lipschitz property, the gradient $\nabla \tau$ is past-directed timelike when it exists.\footnote{Burtscher and García-Heveling also proved that regular cosmological time functions \cite{Galloway1998cosmologicaltime} are weak temporal functions. These functions were also studied in the context of the null distance by Sormani and Vega \cite{sormanivega}, and Sakovich and Sormani \cite{Sakovich2023encodescausality, SakovichSormani2024Variousnotions}.}
		
		\item It is easy to check that the anti-Lipschitz condition for $\tau$  implies the local stability of $\tau$ as a time function, that is, $\tau$ is also a time function for metrics close  to the original one $g$ (or just with the cones a bit more opened) around each point   (and then globally), see  \cite{CMG}. 
Notice that the existence of these functions, indeed, even (more restrictive)  temporal functions, in stably causal spacetimes,  closes the circle of equivalences among all the classical alternative definitions of stable causality 
\cite[Th. 4.15, Fig. 2]{miguel2005revision}.

Analogously,  from a global viewpoint, Cauchy temporal functions are stable in the set of all the Lorentzian metrics and, thus, the existence of such functions \cite{BernalSanchez2005Splitting, zepp:structure} implies the stability of global hyperbolicity (see also 
\cite[5.4.3]{sanchez:slicings} and \cite[\S 4]{zepp:structure}). 
	\end{enumerate}
    Summing up, the logical relations about the local conditions on time functions are illustrated in Figure \ref{fig:local_properties_time}. 
			\begin{figure}[h]
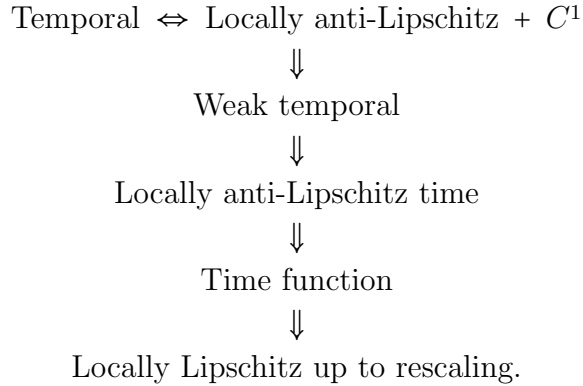
 
		\centering
		\[\begin{array}{c}
			\text{Temporal}\; \Leftrightarrow \;\text{Locally anti-Lipschitz}\; +\; C^1 \text{ time} \\ [2pt]
			\Downarrow \\ [2pt]
			\text{Weak temporal} \\ [2pt]
			\Downarrow \\ [2pt]
			\text{Locally anti-Lipschitz time} \\ [2pt]
			\Downarrow \\ [2pt]
			\text{Time function} \\ [2pt]
			\Downarrow \\ [2pt]
			\text{Locally Lipschitz up to rescaling.} \\ [2pt]
		\end{array} 
		\]
		\caption{Summary of the logical relations among the {\em local} conditions on time functions in the literature (the last one completed in Appendix \ref{app:lipschitz_uptorescaling}). In particular, weak temporal and locally anti-Lipschitz time are equivalent up to a rescaling. Steepness must be regarded as a global property (footnote \ref{foot:steepglobal}). } \label{fig:local_properties_time} 
	\end{figure}
\end{remark}

The 
fundamental result by Burtscher and Garc\'ia-Heveling  
which characterizes global hyperbolicity by using the null distance, is the following.

\begin{theorem}\cite[Theorems 1.4 and 1.9]{GHnulldistance}\label{thm:GHnulldistance}
	A spacetime $(M,g)$ is globally hyperbolic if and only if there exists a time function $\tau$ such that $(M, \hat{d}_\tau)$ is a complete metric space. In this case, any locally anti-Lipschitz Cauchy time function\footnote{Not necessarily onto, see Remark \ref{remark:lipschitz_uptorescaling} (2).} $\tau^*$ encodes causality, that is, 
	\begin{equation*}
		p \leq q \quad \Leftrightarrow \quad \hat{d}_{\tau^*} (p,q) = \tau^* (q) - \tau^* (p), \;\;\,\forall\; p,q \in M.
	\end{equation*}
\end{theorem}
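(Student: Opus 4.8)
The plan is to handle the two assertions separately: the characterization of global hyperbolicity, and the encoding of causality. Throughout I would use the elementary but fundamental lower bound that follows from Definition \ref{def:nulldistance}: for \emph{any} generalized time function $\tau$ and any piecewise causal curve $\beta$ from $p$ to $q$, telescoping inside the absolute values and applying the triangle inequality gives $\hat{L}_\tau(\beta) \ge |\tau(q)-\tau(p)|$, hence $\hat{d}_\tau(p,q) \ge |\tau(q)-\tau(p)|$. For the forward implication of the equivalence I would invoke the existence of a well-adapted temporal function: a globally hyperbolic spacetime admits, by Bernal--S\'anchez \cite{BernalSanchez2005Splitting} (in the sharper form of \cite{muller-sanchez} and Theorem \ref{thm_intro:GHnulldistance-conborde}(iii)), a Cauchy temporal function $\tau$ that is $h$-steep for a \emph{complete} Riemannian metric $h$. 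The $h$-steepness gives $|d\tau(v)| \ge \|v\|_h$ along causal directions, so integrating over each causal segment of $\beta$ yields $\hat{L}_\tau(\beta) \ge L_h(\beta) \ge d_h(p,q)$, i.e. $\hat{d}_\tau \ge d_h$. Thus any $\hat{d}_\tau$-Cauchy sequence is $d_h$-Cauchy, hence convergent by completeness of $h$; since $\hat{d}_\tau$ induces the manifold topology (Burtscher \cite{Burtscher2015Length}), the same limit serves for $\hat{d}_\tau$, and completeness follows.

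For the converse, assume $(M,\hat{d}_\tau)$ is complete and let me show that $\tau$ is a Cauchy time function. Let $\gamma:[0,b)\to M$ be a future-inextensible causal curve. Since $\tau$ increases strictly along $\gamma$, if $\tau\circ\gamma$ were bounded above then for $s<s'$ we would have $\hat{d}_\tau(\gamma(s),\gamma(s')) \le \hat{L}_\tau(\gamma|_{[s,s']}) = \tau(\gamma(s'))-\tau(\gamma(s)) \to 0$, so $\{\gamma(s)\}$ is $\hat{d}_\tau$-Cauchy and converges in $M$ (by completeness and the fact that $\hat{d}_\tau$ induces the manifold topology), contradicting inextensibility. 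Hence $\tau\to+\infty$ along every future-inextensible causal curve, and symmetrically $\tau\to-\infty$ to the past; as $\tau$ is a strictly increasing continuous function, each level set is crossed exactly once and is therefore a Cauchy hypersurface. The existence of a Cauchy hypersurface then gives global hyperbolicity by Geroch's theorem. (These two directions together are exactly Theorem \ref{thm_intro:GHnulldistance-conborde}(i)--(ii) specialized to the empty-boundary case.)

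It remains to prove the encoding, where now $(M,g)$ is globally hyperbolic and $\tau^*$ is a locally anti-Lipschitz Cauchy time function. The implication $p\le q \Rightarrow \hat{d}_{\tau^*}(p,q)=\tau^*(q)-\tau^*(p)$ is the easy one: a future causal curve from $p$ to $q$ is itself piecewise causal of null length $\tau^*(q)-\tau^*(p)$, giving ``$\le$'', while the fundamental lower bound gives ``$\ge$''. For the reverse implication I would first reorganize the length functional. Splitting a piecewise causal curve $\beta$ into its future- and past-directed pieces, one computes $\hat{L}_{\tau^*}(\beta) = (\tau^*(q)-\tau^*(p)) + 2B(\beta)$, where $B(\beta)\ge 0$ is the total $\tau^*$-decrease accumulated along the past-directed pieces. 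Hence the hypothesis $\hat{d}_{\tau^*}(p,q)=\tau^*(q)-\tau^*(p)$ is equivalent to $\inf_\beta B(\beta)=0$; minimizing null length is the same as killing backward time.

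The substantive step is to extract a causal curve from a sequence $\beta_n$ with $B(\beta_n)\to 0$ and bounded null length, and this is where I expect the main obstacle. The idea is that $(M,\hat{d}_{\tau^*})$ is a complete length space (length, because of local anti-Lipschitzness, following Burtscher \cite{Burtscher2015Length}) that is locally compact (because $\hat{d}_{\tau^*}$ induces the manifold topology), so by the Hopf--Rinow--Cohn-Vossen theorem it is \emph{proper}. Properness confines all $\beta_n$ to a fixed compact set $K$, on which the local anti-Lipschitz constant is uniform; this bounds $L_h(\beta_n)$, and after reparametrizing by $h$-arclength, Arzel\`a--Ascoli produces a subsequence converging uniformly to a curve $\gamma$ from $p$ to $q$ in $K$. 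Because $B(\beta_n)\to 0$, the past-directed portions vanish in the limit, so $\gamma$ should be a future-directed causal limit curve, using that the causal relation $J^+$ is closed in the globally hyperbolic $M$; this yields $p\le q$. The two delicate points, on which I would spend the real effort, are establishing properness rigorously and, above all, verifying that vanishing backward-time content forces the limit to be genuinely causal rather than merely a continuous $\tau^*$-monotone curve.
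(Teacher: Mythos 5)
Your treatment of the equivalence is correct and follows essentially the same route as the paper's own Theorem \ref{thm:GHnulldistance-conborde}(i)--(iii): the forward direction via an $h$-steep Cauchy temporal function with $h$ complete (the paper's item (iii), proved in Appendix \ref{appendix:thm_hsteep}), the bound $\hat d_\tau \geq d_h$ from \eqref{e_hsteep_is_time}, and then completeness --- where the paper simply cites \cite[Thm 1.6]{AllenBurtscher2022Properties} and you re-prove it, correctly, using that $\hat d_\tau$ metrizes the manifold topology; the converse via the Cauchy-sequence-along-an-inextendible-causal-curve argument, which is verbatim the paper's proof of item (ii). For the encoding statement, however, note that the paper offers no proof at all (it is imported from \cite{GHnulldistance} and merely asserted to survive the boundary case in Proposition \ref{thm:leonardo_codificacion}), so your sketch is being measured against the original Burtscher--Garc\'{\i}a-Heveling argument, and there it has a genuine gap beyond the two ``delicate points'' you flag.

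Concretely, your properness step is not merely delicate --- it fails under the stated hypotheses. Completeness of $\hat d_{\tau^*}$ is \emph{not} available: the function $\tau^*$ in the second assertion is only a locally anti-Lipschitz Cauchy time function, explicitly \emph{not} assumed onto (see the theorem's footnote and Remark \ref{remark:lipschitz_uptorescaling}(2)), and it is a different function from the $\tau$ of the first assertion. Composing, say, a Cauchy temporal function $t$ on Minkowski space with $\arctan$ gives a locally anti-Lipschitz Cauchy time function whose null distance is bounded by $\pi$; on a noncompact $M$ such a $\hat d_{\tau^*}$ is neither complete nor proper (a future-directed sequence $p_k$ going to timelike infinity satisfies $\hat d_{\tau^*}(p_k,p_m)=|\tau^*(p_k)-\tau^*(p_m)|\to 0$ with no limit), so Hopf--Rinow--Cohn-Vossen cannot be invoked, and with it falls your confinement of the minimizing sequence to a compact $K$ --- which in turn is what made the local anti-Lipschitz constant uniform and $L_h(\beta_n)$ bounded, so the whole chain is circular as written. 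The actual proof in \cite{GHnulldistance} obtains confinement differently (the minimizing curves are trapped in the slab $(\tau^*)^{-1}([\tau^*(p)-\delta,\tau^*(q)+\delta])$, and compactness is extracted from global hyperbolicity and the Cauchy property of the level sets, not from metric properness of $\hat d_{\tau^*}$), and the step you correctly identify as the crux --- that vanishing backward $\tau^*$-content forces the limit to be a genuine causal curve, via limit-curve arguments and closedness of $J^+$ --- is precisely the content of \cite[Thm 1.9]{GHnulldistance} and remains unproven in your proposal. So: first assertion fully proved, matching the paper; second assertion reduced to a plausible strategy whose load-bearing compactness input is wrong as stated and whose key limit-curve step is left open.
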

It is worth pointing out that, in the first part of this theorem, $\tau$ must be chosen carefully, which leads to the notion of 
$h$-steep function.    
Next, we will revisit the role of these functions as well as their independence with  steep ones,  checking also that the theorem can be extended to the case of timelike boundaries.


\subsection{The $h$-steep  functions}\label{sec:time_hsteep}

In this section, we revisit the notion of $h$-steep function and demonstrate both the logical independence and the compatibility between steep temporal functions and $h$-steep temporal functions.
This notion is based on an auxiliary Riemannian metric $h$ and it has both a local content, related to regularity, and a global one, related to the global conformal structure of the spacetime. The former is independent of the completeness of $h$, and it is considered here first just to end the revision of the previous concepts by showing that it is equivalent to 'temporal'. For the latter, however, the completeness of $h$ is essential and it will be imposed later. In the next sections, we will only use these functions from a global viewpoint and, thus, the completeness of $h$ will be assumed.

\subsubsection{Characterization of $h$-steepness ($h$ non-necessarity complete)}

\begin{definition}\label{def:steeptimefunctions}
	Let $(\bM, g)$ be a spacetime-with-timelike-boundary and $h$  a Riemannian metric on $\bM$. A $C^1$ function $\tau : \bM \to \mathbb{R}$ is
		 \emph{$h$-steep}\footnote{This notation was originally introduced in \cite{bernard-suhr} and then changed in \cite{Bernard2020Cauchyanduniform} to \textit{completely uniform temporal functions}. We prefer to stick to the $h$-steep notation as it refers to the Riemannian metric that is related to our temporal function. This is consistent with \cite{minguzzi:h-steep} and previous versions of \cite{GHnulldistance}.} if, for all future causal vectors $v \in TM$,
		\begin{equation} \label{eq:h-steep} 
			d\tau (v) \geq \| v \|_h . 
		\end{equation}
		If $\tau$ is weak temporal and (\ref{eq:h-steep}) holds almost everywhere, we say that $\tau$ is \emph{weak $h$-steep}.
\end{definition}

\begin{proposition}\label{p_hsteep_is_time} Let $(\bM, g)$ be a  spacetime with timelike boundary.
	
\begin{itemize}
	\item [(1)]  Any temporal function $\tau$ on $\bM$ is $h$-steep for a suitable Riemannian metric $h$.
	\item [(2)]  Any $h$-steep function $\tau$ on $\bM$  is a temporal function. 
\end{itemize}	
	 In particular, $(\bM, g)$ is stably causal if and only if it admits an $h$-steep function.
	 
	 \end{proposition}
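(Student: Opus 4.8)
\emph{Overall strategy.} Both statements (1) and (2) are pointwise facts about the Lorentzian vector space $(T_p\bM, g_p)$, so the plan is to reduce each to linear algebra at a single point and then globalize trivially; the final equivalence is then a formal consequence combined with the known existence of temporal functions in stably causal spacetimes.

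\emph{Proof of (2).} Since $\tau$ is $C^1$, the gradient $w:=\nabla\tau$ exists everywhere and $d\tau(v)=g(w,v)$. The $h$-steepness inequality gives $g(w,v)\ge\|v\|_h>0$ for every future causal $v$, and I would show this forces $w$ to be past-directed timelike by ruling out every other causal character with a single well-chosen future causal test vector satisfying $g(w,v)\le 0$. If $w$ were spacelike, $w^\perp$ would be a Lorentzian hyperplane and hence contain a future timelike $v$ with $g(w,v)=0$. If $w$ were future null, any non-proportional future null $v$ gives $g(w,v)<0$, while if $w$ were past null then $v=-w$ is future null with $g(w,v)=0$; if $w$ were future timelike, then $v=w$ gives $g(w,w)<0$. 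Each case contradicts $g(w,v)>0$, leaving only $w$ past timelike, which is exactly the definition of temporal.

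\emph{Proof of (1).} Given a temporal $\tau$, the idea is a Wick rotation along $\nabla\tau$. Set $T:=-\nabla\tau/\sqrt{-g(\nabla\tau,\nabla\tau)}$, a smooth future unit timelike field, and decompose any $v$ as $v=aT+x$ with $x\perp_g T$; then $d\tau(v)=\sqrt{-g(\nabla\tau,\nabla\tau)}\,a$ and $g(v,v)=-a^2+g(x,x)$. I would define the Riemannian metric $h$ by $\|v\|_h^2:=\tfrac12\bigl(-g(\nabla\tau,\nabla\tau)\bigr)\bigl(a^2+g(x,x)\bigr)$, a positive rescaling of the metric obtained by flipping the sign of $g$ on $\Span(T)$; it is smooth and positive definite precisely because $\nabla\tau$ is timelike everywhere. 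For future causal $v$ one has $a>0$ and the cone condition $g(x,x)\le a^2$, so $\|v\|_h^2\le\bigl(-g(\nabla\tau,\nabla\tau)\bigr)a^2=d\tau(v)^2$, which yields $d\tau(v)\ge\|v\|_h$, i.e. $h$-steepness.

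\emph{The equivalence and the main difficulty.} For the last claim, if $\bM$ admits an $h$-steep $\tau$ then by (2) it is temporal, hence a time function (it increases strictly along future causal curves, as $g(\nabla\tau,\gamma')>0$), so $\bM$ is stably causal; conversely, a stably causal spacetime carries a temporal function (Remark~\ref{remark:weak_temporal}(b), \cite{miguel2005revision}), which is $h$-steep for some $h$ by (1). The only genuinely delicate step is the choice of $h$ in (1): the naive reverse Cauchy--Schwarz bound $d\tau(v)\ge|\nabla\tau|_g\,|v|_g$ degenerates on null directions, so $g$ itself cannot serve as the comparison metric, and the Wick-rotated metric together with a scaling factor large enough to absorb the worst-case null direction is exactly what makes the estimate hold across the entire causal cone. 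A minor point is to note that all the linear algebra uses only that $(T_p\bM,g_p)$ is Lorentzian with a full future cone, which remains true at boundary points, so no separate boundary argument is required.
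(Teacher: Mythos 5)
Your proof is correct. Part (1) is essentially the paper's construction: in your splitting $v=aT+x$, your metric $\|v\|_h^2=\tfrac12\bigl(-g(\nabla\tau,\nabla\tau)\bigr)\bigl(a^2+g(x,x)\bigr)$ is exactly half the Wick-rotated metric of the canonical conformal representative, $h=g_W^\tau/2$, which is what the paper intends (the paper's displayed definition, claiming $d\tau(v)^2\geq d\tau(v)^2+\sigma_t(v,v)/2=:h(v,v)$, contains an evident typo — a factor $\tfrac12$ is missing on the $d\tau^2$ term — and your bookkeeping fixes it, consistently with the later assertion in Theorem \ref{intro:thm_A} that $\tau$ is $h$-steep for $h=g_W^\tau/2$). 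Where you genuinely diverge is part (2). The paper argues along curves: it integrates $d\tau(\dot\gamma)\geq\|\dot\gamma\|_h$ to get the anti-Lipschitz estimate $\tau(q)-\tau(p)\geq d_h(p,q)$ of \eqref{e_hsteep_is_time}, rules out closed causal curves by a partition into globally hyperbolic neighborhoods, and only then asserts in one line that $g(\nabla\tau,v)>0$ for all future causal $v$ makes $\nabla\tau$ past timelike. You invert the order and prove that one-line assertion in full by a pointwise case analysis on the causal character of $w=\nabla\tau$, which is more elementary, makes the paper's implicit linear algebra explicit, and renders the closed-curve argument unnecessary (a past-timelike gradient gives strict increase along future causal curves directly). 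What the paper's route buys instead is the quantitative inequality \eqref{e_hsteep_is_time}, which is reused later (e.g., in Theorem \ref{thm:GHnulldistance-conborde}(i) for completeness of the null distance), so it is not redundant in context. Two one-line points you should add for completeness: the case $w=0$ must also be excluded (trivial, since then $g(w,v)=0<\|v\|_h$), and the strict increase of $\tau$ along merely locally Lipschitz causal curves uses that $\tau\circ\gamma$ is absolutely continuous with derivative $d\tau(\dot\gamma)>0$ a.e.
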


\begin{proof} (1) If $(\bM, g)$ admits a temporal function $\tau$,  its gradient splits orthogonally the tangent bundle and, then, the expression of the metric in $TM$ splits as $g = -\Lambda d\tau^2 + \sigma_\tau$ (see Section \ref{subsec:conformally_invariant_g_W}). 
    For any $v$ future causal we have
\begin{equation*}
	d \tau (v)^2 \geq \frac{\bsigma_\tau (v,v)}{2},
\end{equation*}
which implies 
\begin{equation*}
	d\tau (v)^2 \geq d\tau (v)^2 + \frac{\bsigma_\tau (v,v)}{2} =: h(v,v).
\end{equation*}
	
(2) For any future-directed causal curve $\gamma: [0,1]\rightarrow \bM$ and $t<t'$ in its domain, putting $p=\gamma(t)$, $q=\gamma(t')$:
	\begin{equation}\label{e_hsteep_is_time}
		\tau (q) - \tau (p) = \int_{0}^{1} d\tau (\dot{\gamma}) (s) ds 
		\geq \int_{0}^{1} \|\dot{\gamma} (s)\|_{h} ds 
		= L_h (\gamma) \geq d_{h} (p,q).
	\end{equation}
	In the case that $(\bM, g)$ is causal $p \neq q$, thus $d_{h} (p,q)>0$ and $\tau$ becomes a smooth time function. When $p = q$ we have a contradiction. Namely, take a partition $0=t_0< t_1 \dots <t_k=1$ so that $\gamma|_{[t_i,t_{i+1}]}$ lies in a neighborhood which is intrinsically globally hyperbolic, put $p_i=\gamma(t_i)$ for $i=1, \dots ,k-1$, and note
	\begin{equation*}
	0 = \tau (p) - \tau (p)=\sum_{i=0}^{k-1} \left(\tau(p_{i+1})-\tau(p_i)\right)>0.	
	\end{equation*} 
	To see that the gradient $\nabla \tau$ is past-directed timelike, just notice that for any future directed causal vector $v$ we have
	\begin{equation*}
		g (\nabla \tau, v) = d\tau (v) \geq \| v \|_h > 0.
	\end{equation*}
	
	For the last assertion, the necessary condition follows from (1) (taking into account the existence of temporal funcions in stably causal spaces  \cite{splitting} \cite{miguel2005revision}), while the sufficient condition follows from (2).
	\end{proof}

\begin{remark} The diagram in Figure \ref{figure:temporal_hsteep_equiv} completes the logical relations between the different types of functions in Figure \ref{fig:local_properties_time}. In particular,
the condition of being $h$-steep with respect to some Riemannian metric $h$ (not necessarily complete) emerges as a local one equivalent to temporal. In what follows, we will use it only with respect to a {\em complete} Riemannian metric $h$.
\end{remark}
\begin{figure}[h]
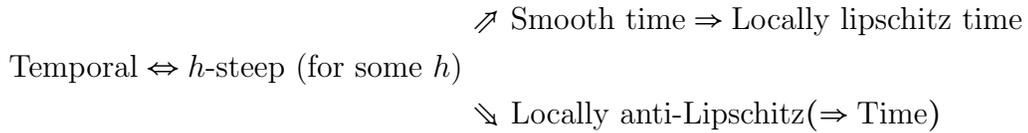
 
	\begin{align*}
		&\Nearrow \text{Smooth time} \Rightarrow \text{Locally Lipschitz time} \\[0.5pt]
		\text{Temporal} \Leftrightarrow h\text{-steep (for some } h\text{)} & \\[0.5pt]
		&\Searrow \text{Locally anti-Lipschitz} (\Rightarrow \text{Time})
	\end{align*}
	\caption{Summary of the local properties of temporal and $h$-steep functions. Temporal functions become equivalent to $h$-steep ones for some (possibly incomplete, non-prescribed) $h$. Thus, $h$-steepness will be used globally (for complete $h$).} \label{figure:temporal_hsteep_equiv}
\end{figure}

\subsubsection{Independence of steepness and $h$-steepness}\label{sec:indep_steephsteep}
Apart from the implications above, the concepts of being steep temporal and $h$-steep (with respect to a complete $h$) are logically independent. Indeed, as discussed by Burtscher and Garc\'ia-Heveling \cite[Example 4.5]{GHnulldistance}, the temporal function $\tau = t$ in the globally hyperbolic spacetime given by one of the authors in \cite[Sec 6.4]{sanchez:slicings} is steep (in fact, its gradient is unit), but it is not $h$-steep with respect to any {\em complete} Riemannian metric, since its null distance $\hat{d}_t$ is incomplete. 
We complement this example with the following one. 

\begin{example}\label{ex:h-steep_no-steep}(An $h$-steep temporal function with $h$ complete that is not steep.)
	{\em 
		Consider the $2$-dimensional Minkowski spacetime $\LL^2 := (\mathbb{R}^{2},g = -dt^2 + dx^2)$. Starting at the steep temporal function $t$,  define a new 
		function $\tau= \psi \circ t$, where $\psi: \R\rightarrow \R$ is an increasing diffeomorphism so that $\psi'=1$ everywere but in a sequence of small    intervals  
		$I_k \subset  \R$ with disjoint closures, each $I_k$ centered at a point $t_k$ with 
		$\{ t_k \}_k$ diverging and 
				$\lim_{k\rightarrow \infty} \psi'(t_k)=0$.
		Clearly, $\tau$ will not be steep but it will be h-steep with respect to the complete Riemannian metric
		$h=(\psi')^2 (dt^2 + dx^2)$ (see Appendix \ref{appendix:example2.2} for an explicit construction of $\tau$).
        
		\smallskip
		
        \noindent Even though this example could be regarded as ``steep up to a diffeomorphism'' (as in Remark \ref{remark:lipschitz_uptorescaling}), it might be refined by making the divergence $\Psi' (t_k) \times 0$ to appear just around a sequence of points $(t_k, x_k)$ instead of the whole slice $\tau = t_k$.}

\end{example}
\begin{remark}
	Example  \ref{ex:h-steep_no-steep} can be generalized by showing that the steepness of a temporal function $t$ can {\em always} be  spoiled replacing $t$ by a suitable composition $\tau = \psi\circ t$, where $\psi$ is an increasing diffeomorphism. However, in general, a non-steep time function cannot be converted into steep by using such a composition (indeed, there are stably causal spacetimes which do not admit steep temporal functions \cite[Ex 3.3]{muller-sanchez}). The temporal or anti-Lipschitz character  of a time funcion cannot be dropped by composing with an increasing diffeomorphism, but it will be by composing with a smooth homeomorphism 
	which is not a diffeomorphism (compare with Remark \ref{remark:lipschitz_uptorescaling}).   
\end{remark}
\medskip

\subsubsection{Case $h$ complete: global hyperbolicity and compatibility with steepness}\label{sec:theor3_12}

As seen in  Theorem \ref{theorem:splitting_boundary}, any globally hyperbolic spacetime-with-timelike-boundary admits an {\em adapted} steep Cauchy time function $\tau : \bM \to \mathbb{R}$ \cite{zepp:structure}, being steepness a property which ensures isometric embeddability in some $\LL^N$ 
\cite{muller-sanchez}. 
The equivalence of global hyperbolicity and the existence of an $h$-steep temporal function with respect to a complete $h$ was proven by Bernard and Suhr for closed cone fields \cite[Theorem 3]{bernard-suhr}, and, therefore, also for spacetimes-with-timelike-boundary. 
Burtscher and Garc\'{\i}a-Heveling \cite{GHnulldistance} linked  $h$-steepness of $\tau$ to the completeness of its associated null distance $\hat d_\tau$. {This allowed them to obtain a new characterization for global hyperbolicity and the codification of causality as in Theorem \ref{thm:GHnulldistance}.}
Next, we will revisit these results, checking that all the requirements: steepness, $h$-steepness with respect to a complete $h$ and adaptability to the splitting of the boundary, can be obtained at the same time for a Cauchy temporal function. 
This will be achieved by adapting the procedure in \cite[Theorem 1.2]{muller-sanchez} to the boundary, following the approach outlined in \cite{zepp:structure}, as suggested in \cite{sanchez:slicings}.

\begin{theorem}\label{thm:GHnulldistance-conborde}
Let $(\bM, g)$ be a spacetime-with-timelike-boundary and $\tau: \bM \to \mb R$ be a {smooth} function:
\begin{enumerate}[label=(\roman*)]
\item If $\tau$ is $h$-steep (for a complete $h$) then $\tau$ is a temporal function and $\hat{d}_\tau$ a complete distance. 

\item If $\tau$ is a time function such that $(\bM, \hat{d}_{\tau})$ is a complete metric space, then $\tau$ is a Cauchy time function. In particular, $(\bM, g)$ is globally hyperbolic. 

\item If $(\bM, g)$ is globally hyperbolic, then there exists an {adapted} Cauchy  temporal function $\tau$ which is steep and $h$-steep with respect to a complete Riemannian metric $h$. 
\end{enumerate}
In particular, the following properties are equivalent for $(\bM, g)$: 
\begin{itemize}
\item[(a)] global hyperbolicity, 
\item[(b)] to admit an $h$-steep function with respect to a complete $h$, 
\item[(c)] to admit a time function $\tau$ such that $\hat d_\tau$ is complete, 
\item[(d)]  to admit a Cauchy time function,
\item[(e)]  to admit an adapted Cauchy steep temporal function,
\item[(f)]  (in the $C^3$ case) each  metric conformal to $g$ is isometrically embeddable in $\LL^N$ for large $N$.  

\end{itemize}
\end{theorem}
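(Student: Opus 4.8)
The plan is to prove the three numbered implications separately and then assemble the equivalences by chaining them with Theorem~\ref{theorem:splitting_boundary}. For (i), the temporal character of $\tau$ is immediate from Proposition~\ref{p_hsteep_is_time}(2), so $\hat{d}_\tau$ is already a genuine length distance inducing the manifold topology. To obtain completeness I would compare $\hat{d}_\tau$ with the distance $d_h$ of the complete auxiliary metric. Applying $h$-steepness to $v$ and to $-v$ gives $|d\tau(v)|\geq \|v\|_h$ for every causal $v$; since each segment $\beta_i$ of a piecewise causal curve is $\tau$-monotone, $|\tau(x_i)-\tau(x_{i-1})| = \int|d\tau(\dot\beta_i)| \geq L_h(\beta_i)\geq d_h(x_{i-1},x_i)$, and summing over the break points yields $\hat{d}_\tau\geq d_h$. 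Hence every $\hat{d}_\tau$-Cauchy sequence is $d_h$-Cauchy, so it converges in $d_h$ by completeness of $h$; since both distances induce the manifold topology, the limit is also a $\hat{d}_\tau$-limit, proving $\hat{d}_\tau$ complete.

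For (ii) I would argue by contradiction, adapting the scheme of Burtscher--García-Heveling \cite{GHnulldistance} to the boundary setting (using inextensible locally Lipschitz causal curves, cf. \cite{zepp:structure}). Suppose a level $\tau^{-1}(c)$ is not a Cauchy hypersurface; since $\tau$ is strictly increasing along causal curves it is met at most once, so there is an inextensible causal curve $\gamma$ missing it entirely, and by continuity $\tau\circ\gamma$ stays on one side, say $\tau\circ\gamma<c$ along its future end. Then $\tau\circ\gamma$ is monotone and bounded, hence convergent, so for $s<s'$ near the future endpoint $\hat{d}_\tau(\gamma(s),\gamma(s'))\leq \tau(\gamma(s'))-\tau(\gamma(s))\to 0$; thus the future end of $\gamma$ is $\hat{d}_\tau$-Cauchy. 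Completeness produces a limit point $x$, and since $\hat{d}_\tau$ induces the manifold topology, $\gamma$ extends continuously to $x$, contradicting inextensibility. Therefore every level of $\tau$ is a Cauchy hypersurface, so $\tau$ is a Cauchy time function, and the existence of a Cauchy hypersurface makes $(\bM,g)$ globally hyperbolic (Geroch's theorem, in the boundary version of \cite{zepp:structure}).

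Item (iii) is where the real work lies and is the main obstacle; I would defer the construction to Appendix~\ref{appendix:thm_hsteep}. Starting from the adapted Cauchy steep temporal function provided by Theorem~\ref{theorem:splitting_boundary}, with splitting $g=-\Lambda\,d\tau^2+\sigma_\tau$ and $\Lambda$ as small as desired, the candidate $h(v,v)=d\tau(v)^2+\tfrac12\sigma_\tau(v,v)$ from Proposition~\ref{p_hsteep_is_time}(1) already makes $\tau$ $h$-steep, but there is no reason for this $h$ to be complete. The genuine difficulty is to choose the spatial metric in the splitting so that the resulting $h$ is complete \emph{while simultaneously} retaining steepness of $\tau$ and adaptedness to the boundary (i.e. $\nabla\tau$ tangent to $\partial M$). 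This forces a careful revisiting of the orthogonal-splitting constructions of Bernal--Sánchez \cite{BernalSanchez2005Splitting}, Müller--Sánchez \cite{muller-sanchez} and Aké et al. \cite{zepp:structure}, which is the technical heart of the result.

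Finally, the equivalence of (a)--(f) follows by assembling the three items with Theorem~\ref{theorem:splitting_boundary}. Indeed, (a)$\Rightarrow$(b),(d),(e) is exactly (iii); (b)$\Rightarrow$(c) is (i); and (c)$\Rightarrow$(a) is (ii), so in particular (b)$\Rightarrow$(a). Moreover (d)$\Rightarrow$(a) and (e)$\Rightarrow$(d)$\Rightarrow$(a) are the classical consequences of admitting a Cauchy hypersurface, while (a)$\Leftrightarrow$(f) is item (i) of Theorem~\ref{theorem:splitting_boundary}. Chaining these implications closes the cycle and yields the stated equivalence.
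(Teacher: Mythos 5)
Your treatment of (i), (ii) and the closing chain of equivalences is correct and essentially matches the paper's route. For (i), the paper derives $\tau(q)-\tau(p)\geq d_h(p,q)$ from \eqref{e_hsteep_is_time} and then invokes \cite[Theorem 1.6]{AllenBurtscher2022Properties}, noting the argument is purely metric and hence extends to the boundary case; your self-contained derivation of $\hat d_\tau\geq d_h$ by summing over the breaks of a piecewise causal curve is just that citation written out, and it is sound. For (ii), the paper runs the same contradiction as you do (following \cite[Theorem 4.1]{GHnulldistance}): an inextendible causal curve with bounded $\tau$ yields a $\hat d_\tau$-Cauchy sequence whose limit would extend the curve. (Both you and the paper gloss the same point here -- that $\hat d_\tau$-convergence yields convergence in the manifold topology, which for a mere time function is part of what is delegated to \cite{GHnulldistance} -- so this is not a gap relative to the paper.) Your assembly (a)$\Rightarrow$(b),(d),(e) via (iii), (b)$\Rightarrow$(c) via (i), (c)$\Rightarrow$(a) via (ii), and (a)$\Leftrightarrow$(f) via Theorem~\ref{theorem:splitting_boundary}(i) is exactly the paper's.

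The genuine gap is item (iii), which carries the real content of the theorem, and the strategy you sketch for it would fail. You propose to start from the adapted Cauchy \emph{steep} temporal function of Theorem~\ref{theorem:splitting_boundary} and then ``choose the spatial metric in the splitting so that the resulting $h$ is complete'' while retaining steepness and adaptedness. But once $g$ and $\tau$ are fixed, $\sigma_\tau$ is determined -- the only genuine freedom is in $\tau$ itself -- and a fixed steep Cauchy temporal function can fail to be $h$-steep for \emph{every} complete $h$: this is precisely the situation of $\tau=t$ in the spacetime of \cite[\S 6.4]{sanchez:slicings}, recalled in \S\ref{sec:indep_steephsteep} (equivalently, by Theorem~\ref{intro:thm_A}, its Wick-rotated metric is incomplete). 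So the function supplied by Theorem~\ref{theorem:splitting_boundary} cannot in general be ``repaired''; the temporal function must be rebuilt. This is what the paper does in Appendix~\ref{appendix:thm_hsteep}: it fixes the complete metric $h$ in advance and reruns the M\"uller--S\'anchez construction, enlarging the constants at each step (Lemmas~\ref{remark:h-steepcompactos}, \ref{lemma:suma-es-hsteep} and \ref{lemma:constant-hsteep} guarantee $h$-steepness on compacta, under sums, and after rescaling) so that every fat-cone building block is simultaneously steep and $h$-steep; adaptedness to $\partial M$ is then obtained via the double manifold, constructing a reflection-invariant steep and $h^d$-steep Cauchy temporal function $\tau^d+r^*\tau^d$ and restricting to $\bM$. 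Deferring the construction to an appendix is legitimate, but your proposal contains neither this construction nor a viable alternative, and the specific plan you outline is contradicted by the paper's own counterexample.
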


\begin{proof}
{\em (i)} $\tau$ is a temporal function by Proposition \ref{p_hsteep_is_time} and, by \eqref{e_hsteep_is_time}, 
$\tau (q) - \tau (p) 
\geq d_{h} (p,q)$. 
Thus, $\tau$ is {(globally) anti-Lipschitz} with respect to the complete distance $d_h$. In the case without boundary,  \cite[Theorem 1.6]{AllenBurtscher2022Properties} ensures that $\hat{d}_{\tau}$ is complete. However, { as all the involved properties are metric}, the proof is directly extended to  spacetimes-with-timelike-boundary.

{\em (ii)} Following \cite[Theorem 4.2]{GHnulldistance}, assume by contradiction that $(\bM, \hat{d}_{\tau})$ is a complete metric space but that $\tau$ is not a Cauchy time function. Then, there exists a future-directed  (or analogously past-directed)  inextendible causal curve $\gamma : \mathbb{R} \to \bM$ such that $\lim_{s \to \infty} \tau (\gamma (s) ) < \infty$. Consider the sequence $\{p_k\}_k$, with $p_k := \gamma (k)$ for $k\in \N$. Since the points $p_k$ are causally related we have, from the definition of $\hat{d}_{\tau}$, 
\begin{equation*}
	\hat{d}_{\tau} (p_k , p_m ) = |\tau (p_k) - \tau (p_m)|.
\end{equation*}
Since $\tau \circ \gamma : \mathbb{R} \to \mathbb{R}$ is strictly increasing and bounded from above, it follows that $\{p_k\}_k$ is a Cauchy sequence in $(\bM, \hat{d}_{\tau})$ and, by completeness, it must converge to a point  in the closure of the image of ${\gamma}$, in contradiction with the inextensibility of $\gamma$.

{\em (iii)} The proof proceeds along the same lines as in \cite[Theorem 1.2]{muller-sanchez}, by choosing at each step of the proof sufficiently large constants in a more restrictive manner, and checking that the construction permits to use the double manifold in order to be adapted to the boundary as in \cite{zepp:structure}; see Appendix \ref{appendix:thm_hsteep} for a detailed proof.

Next we prove equivalences (a)---(e):  
{(a) $\Rightarrow$ (b)}. This follows from {\em (iii)} of this theorem. 
{(b) $\Rightarrow$ (c)} is the content of {\em (i)} above.
{(c) $\Rightarrow$ (d)} follows from {\em (ii)} as we are assuming that $\hat{d}_\tau$ is complete.
For implication {\em (d) $\Rightarrow$ (e)} note that admitting a Cauchy time function implies global hyperbolicity and point {\em (iii)} of this theorem ensures the existence of a (possibly different) adapted Cauchy steep temporal function.
Finally, {(f) is equivalent to (e)} by Theorem \ref{theorem:splitting_boundary} {\em (i)}.
\end{proof}

%

\subsection{Conformal structure  and time are  codified by $\hat{d}_{\tau}$
} \label{sec:nulldistance}

We can recover Burtscher and Garc\'{\i}a-Heveling's result that the null distance encodes causality for temporal functions \cite[Theorem 3.3]{GHnulldistance} with a direct adaptation of their proof. 

\begin{proposition}\label{thm:leonardo_codificacion}
	Let $(\bM, g)$ be a globally hyperbolic spacetime-with-timelike-boundary, and let $\tau$ be any  adapted Cauchy {temporal} function.  Then, the null distance $\hat{d}_\tau$ encodes causality; that is,
\begin{equation*}
		p \leq q \quad \Leftrightarrow \quad \hat{d}_\tau (p,q) = \tau (q) - \tau (p).
\end{equation*}
\end{proposition}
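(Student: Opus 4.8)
The plan is to reproduce, in the presence of a timelike boundary, the encoding-of-causality argument behind Theorem~\ref{thm:GHnulldistance} (Sormani--Vega and Burtscher--Garc\'ia-Heveling), checking that each ingredient survives the extension. The backbone is the \emph{fundamental lower bound}: for \emph{any} piecewise causal curve $\beta$ from $p$ to $q$ with break points $x_0=p,\dots,x_k=q$, the triangle inequality for absolute values gives $\hat L_\tau(\beta)=\sum_i|\tau(x_i)-\tau(x_{i-1})|\ge\big|\sum_i(\tau(x_i)-\tau(x_{i-1}))\big|=|\tau(q)-\tau(p)|$, so $\hat d_\tau(p,q)\ge|\tau(q)-\tau(p)|$ with no causality assumption. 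It is also convenient to record the decomposition $\hat L_\tau(\beta)=(\tau(q)-\tau(p))+2D(\beta)$, where $D(\beta)$ is the total \emph{backward} $\tau$-variation of $\beta$ (the sum of the drops of $\tau$ along its past-directed pieces); since $\tau$ is monotone on each causal piece, minimizing null length is exactly the same as minimizing $D(\beta)$.

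For the implication $p\leq q\Rightarrow\hat d_\tau(p,q)=\tau(q)-\tau(p)$ I would argue directly: a single future-directed causal curve from $p$ to $q$ realizes $\hat L_\tau=\tau(q)-\tau(p)$ (here $\tau$ increases, so $D=0$), giving $\hat d_\tau(p,q)\le\tau(q)-\tau(p)$, and the fundamental lower bound (with $\tau(q)\ge\tau(p)$) yields equality. This half uses nothing beyond $\tau$ being a time function and, since it only evaluates $\tau$ at endpoints of locally Lipschitz causal curves, it is insensitive to $\partial M$ once one adopts the convention (fixed in \S\ref{sec:preliminaries}, following \cite{zepp:structure}) that $J^\pm$ on $\bM$ is computed with locally Lipschitz causal curves.

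The substantive direction is $\hat d_\tau(p,q)=\tau(q)-\tau(p)\Rightarrow p\leq q$. If $\tau(p)=\tau(q)$, the hypothesis forces $\hat d_\tau(p,q)=0$, and since $\tau$ is locally anti-Lipschitz (so $\hat d_\tau$ is a genuine, topology-inducing distance by \cite{sormanivega, Burtscher2015Length}) this gives $p=q$. When $\tau(q)>\tau(p)$, I would fix a minimizing sequence $\beta_n$ of piecewise causal curves from $p$ to $q$, whence $D(\beta_n)\to0$ by the decomposition above. The aim is to extract a future-directed causal limit from $p$ to $q$. Granting that the $\beta_n$ can be confined to a fixed compact set $K$ (see below), local anti-Lipschitzness provides a uniform constant $C$ on $K$ with $\|v\|_h\le C\,d\tau(v)$ along causal pieces; hence the $h$-length of each $\beta_n$ is bounded by $C(\tau(q)-\tau(p)+2D(\beta_n))$, uniformly in $n$. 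Reparametrizing by $h$-arclength and applying Arzel\`a--Ascoli yields a subsequence converging uniformly to a curve $\beta_\infty$ from $p$ to $q$ in $K$; this limit is causal (a locally uniform limit of causal pieces, with velocity causal a.e.) and, because $D(\beta_n)\to0$ kills its backward variation, future-directed. Thus $p\leq q$.

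The hard part is the confinement to a compact $K$, and this is where global hyperbolicity---rather than completeness of $\hat d_\tau$, which we deliberately do not assume here---must enter: piecewise causal minimizers are not causal, so they need not lie in a single causal diamond, and one must exploit that their backward excursions are asymptotically negligible. Since $D(\beta_n)\to0$, the $\tau$-range of $\beta_n$ is squeezed into $[\tau(p)-D(\beta_n),\tau(q)+D(\beta_n)]$, so the $\beta_n$ lie in a fixed slab between two Cauchy slices; the delicate point is ruling out spatial escape within this slab. I would control this through the compactness properties of the causal structure of globally hyperbolic spacetimes---$J^+(p)$ is closed, causal diamonds are compact, and the intersection of $J^+(p)$ with a Cauchy slice is compact (via the limit curve theorem)---which, together with the vanishing backward variation, keep the curves in a compact region; this is precisely the mechanism used in \cite{sormanivega, GHnulldistance}. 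Finally, to pass to the boundary one only needs these same tools---the limit curve theorem and the compactness of $J^+$ against Cauchy slices---for globally hyperbolic spacetimes-with-timelike-boundary, and these hold by the causality theory and the Cauchy splitting of Theorem~\ref{theorem:splitting_boundary} (\cite{zepp:structure}), provided causal curves are taken locally Lipschitz so that they may travel along $\partial M$. Since, under this regularity convention, no step distinguishes interior from boundary points, the Sormani--Vega/Burtscher--Garc\'ia-Heveling proof transfers without modification, as asserted.
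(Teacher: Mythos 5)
Your proposal is correct and follows essentially the same route as the paper: the paper's entire ``proof'' is the observation, stated just before the proposition, that the Burtscher--Garc\'{\i}a-Heveling encoding argument (Theorem~\ref{thm:GHnulldistance}) ``remains valid without modification in the presence of a timelike boundary,'' given the convention that causal curves on $\bM$ are locally Lipschitz so they may run along $\partial M$. You simply unpack that argument (lower bound via the triangle inequality, minimizing sequences with vanishing backward variation, anti-Lipschitz $h$-length control, limit curves, and the compactness tools supplied for the boundary case by Theorem~\ref{theorem:splitting_boundary} and \cite{zepp:structure}) in more detail than the paper does, with the only cosmetic caveat that the anti-Lipschitz bound should be used in its integrated form $d_h(x,y)\leq C\,(\tau(y)-\tau(x))$ rather than the infinitesimal $\|v\|_h\leq C\,d\tau(v)$, since $\tau$ need not be differentiable.
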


In the case without boundary, the regularity of $\tau$ in Proposition \ref{thm:leonardo_codificacion}  can be lowered to being a {\em locally anti-Lipschitz time} function \cite[Theorem 1.9]{GHnulldistance}. Their proof relies on Sakovich
and Sormani’s local causality encoding \cite[Theorem 1.1]{Sakovich2023encodescausality}, which uses special
coordinate systems.  Moreover, Galloway \cite{Galloway2024encoding} proved that the causality assumption on the level sets of $\tau$ to obtain a global encoding of causality can be weakened to being {\em future causally complete}.  The possible generalization of such results for  spacetimes-with-timelike-boundary  is postponed to future work.

Next, we take a step further by showing that when the null distance $\hat d_{\tau}$ encodes causality, 
it characterizes both the conformal class of the metric and the time function. A closely related result was previously established by Sakovich and Sormani \cite[Thm 1.3]{Sakovich2023encodescausality}, who proved that a bijective map $F: (M,g,\tau) \to (M^*, g^*, \tau^*)$ between spacetimes equipped with regular cosmological time functions \cite{Galloway1998cosmologicaltime}, and preserving both time values and null distances, must be a Lorentzian isometry; that is, $F^* g^* = g$. Their argument proceeds by first showing that $F$ is a causal bijection, which -- by a result of Levichev \cite{Levichev1987causalstructure} -- implies that $F$ is a conformal diffeomorphism. Then, using the properties of cosmological time functions, they conclude that $F$ is in fact an isometry. We prove a similar result, with the key difference that ours applies to any locally anti-Lipschitz time function. However, in our case, $F$ need not be an isometry but a conformal diffeomorphism, consistent with the conformal invariance of the null distance and our focus on the causal structure. We restrict to the case without boundary.


\begin{theorem}  \label{thm:isometrias_distancia_nula}
Let $(M, g)$ and $(M^*, g^*)$ be 
spacetimes 
of dimension $n+1\geq 3$ and, respectively,  let $\tau$ , $\tau^*$ be locally anti-Lipschitz time functions\footnote{\label{pie}Locally anti-Lipschitz time functions yield a positive definite metric space $(M, \hat{d}_\tau)$ which induces the manifold topology \cite[Thm 4.6]{sormanivega}.} 
such that $\hat{d}_{\tau}$ and $\hat{d}_{\tau^*}$ encode the causality on $M$ and $M^*$, respectively.\footnote{Note that this assumption is satified when $\hat{d}_{\tau}$ and $\hat{d}_{\tau^*}$ are complete, in which case, the functions $\tau$ and $\tau^*$ are locally anti-Lipschitz Cauchy time functions whose null distances encode causality (see 
Theorem~\ref{thm:GHnulldistance}). This is also the case when one considers {\em proper regular cosmological time functions}, as in \cite{Sakovich2023encodescausality}.}

Then, a time-preserving bijection $F: M \rightarrow M ^*$ preserves the null distances if and only if $F$ is a conformal diffeomorphism (possibly reversing the time orientation).\footnote{In the excluded dimension n+1=2 we should admit negative conformal factors too.}
\end{theorem}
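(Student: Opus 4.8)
The plan is to establish the two implications separately, using the causality-encoding hypothesis as the bridge and deferring the recovery of smoothness to the classical theory of causal automorphisms. Throughout, ``time-preserving'' means $\tau^* \circ F = \tau$ and ``preserves null distances'' means $\hat{d}_{\tau^*}(F(p),F(q)) = \hat{d}_\tau(p,q)$ for all $p,q$.

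For the implication $(\Leftarrow)$ I would argue directly at the level of curves. If $F^* g^* = \Omega^2 g$ with $\Omega>0$, then $F$ preserves the causal character of tangent vectors, so $F$ and $F^{-1}$ map future- or past-directed causal curves to causal curves; hence $\beta \mapsto F\circ\beta$ is a bijection between piecewise causal curves from $p$ to $q$ and those from $F(p)$ to $F(q)$. Since $\hat{L}_{\tau}(\beta)$ depends only on the $\tau$-values at the break points and $\tau^* \circ F = \tau$, one gets $\hat{L}_{\tau}(\beta) = \hat{L}_{\tau^*}(F\circ\beta)$, and taking infima yields $\hat{d}_{\tau^*}(F(p),F(q)) = \hat{d}_\tau(p,q)$. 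I note that this step never uses the time orientation of $F$, which is exactly why the statement allows it to be reversed.

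The implication $(\Rightarrow)$ is the heart of the matter, and here I would exploit the encoding hypothesis. From the two identities $\hat{d}_{\tau^*}(F(p),F(q)) = \hat{d}_\tau(p,q)$ and $\tau^*(F(q)) - \tau^*(F(p)) = \tau(q)-\tau(p)$ it follows at once that $\hat{d}_\tau(p,q)=\tau(q)-\tau(p)$ holds if and only if $\hat{d}_{\tau^*}(F(p),F(q)) = \tau^*(F(q))-\tau^*(F(p))$. Feeding this into the encoding property on each spacetime converts it into
\[
p \leq q \quad \Longleftrightarrow \quad F(p) \leq F(q),
\]
so that the bijection $F$ is an order isomorphism for the causal relation $\leq$, i.e. a causal bijection.

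To finish, I would invoke Levichev's theorem \cite{Levichev1987causalstructure}. The existence of the time functions $\tau,\tau^*$ makes $M$ and $M^*$ stably causal, hence distinguishing, and the dimension hypothesis $n+1\geq 3$ is in force; under these conditions a causal bijection is automatically a smooth conformal diffeomorphism, giving $F^* g^* = \Omega^2 g$ with $\Omega>0$. Because $F$ preserves the directed relation $\leq$, it sends future cones to future cones and is therefore time-orientation preserving; the parenthetical ``possibly reversing the time orientation'' records only that the conformal conclusion does not depend on fixing the orientation beforehand, consistent with the $(\Leftarrow)$ argument above. The hard part is not re-proved here: the genuinely difficult analytic content --- reconstructing the differentiable and conformal structure from a purely order-theoretic map --- is precisely what Levichev's theorem supplies, so the main task is to verify its hypotheses (distinguishability and dimension $\geq 3$) and to recall, as in the footnote, that dimension two is genuinely exceptional, where negative conformal factors must be admitted.
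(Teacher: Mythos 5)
Your proposal is correct, and its skeleton coincides with the paper's: the backward implication is the easy one (the paper dismisses it as trivial; your curve-level argument via $\hat{L}_{\tau}(\beta)=\hat{L}_{\tau^*}(F\circ\beta)$ is exactly what makes it trivial), and the forward implication proceeds by the same chain of identities to show $p\leq q \Leftrightarrow F(p)\leq F(q)$, i.e.\ that $F$ is a causal bijection. The one genuine divergence is the final rigidity step. The paper first observes that $F$ is a \emph{homeomorphism} -- for free, because both $\hat{d}_\tau$ and $\hat{d}_{\tau^*}$ induce the manifold topology \cite{sormanivega} and $F$ is a metric isometry between them -- and then applies the Hawking--King--McCarthy theorem \cite[Theorem 6]{HKM}, which upgrades a causality-preserving homeomorphism to a smooth conformal diffeomorphism. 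You instead invoke Levichev's theorem \cite{Levichev1987causalstructure}, which recovers the topology purely from the order isomorphism and so dispenses with the homeomorphism verification; this is precisely the route the paper attributes to Sakovich and Sormani in the discussion preceding the theorem, and your verification of its hypotheses (stable causality from the existence of time functions, hence distinguishing, plus $n+1\geq 3$) is sound. What each buys: the paper's route exploits a structural feature specific to the null-distance setting (that $\hat d_\tau$ metrizes the manifold topology), making the proof self-contained within the tools already on the table, while yours trades that for the stronger order-theoretic rigidity result and would survive even in situations where one did not know a priori that the null distances induce the manifold topology. One small caveat: your closing claim that $F$ preserving the directed relation $\leq$ forces time-orientation preservation is correct under the stated hypotheses, but your gloss on the parenthetical is slightly off -- it is there to cover the backward implication, where a time-orientation-reversing conformal diffeomorphism still preserves null lengths because $\hat{L}_\tau$ involves absolute values; this does not affect the validity of either direction of your argument.
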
 
\begin{proof} The sufficient condition is trivial. 

For the converse, as $F$ preserves time, we have $\tau = \tau^* \circ F$, and because it also preserves null distances, we obtain, for any $p \leq q$,
\begin{equation*}
    \hat{d}_{\tau^*} (F(p), F(q)) = \hat{d}_\tau (p,q) = \tau (q) - \tau (p) = \tau^{*} (F(q)) - \tau^* (F(p)).
\end{equation*}
By the codification of causality via null distance, this implies that $F(p) \leq F(q)$, so $F$ is a causal bijection, that is,
\begin{equation*}
	p \leq q \;\;\Leftrightarrow\;\; F(p) \leq F(q).
\end{equation*}
Note that $F$ is also a homeomorphism as both $\hat{d}_\tau$ and $\hat{d}_{\tau^*}$ induce the manifold topology \cite[Prop. 3.5]{sormanivega}. Therefore $F: M \to M^*$ is a conformal diffeomorphism \cite[Theorem~6]{HKM}. 
\end{proof}

\section{Wick-rotated Riemannian geometry 
} \label{sec:g_W}


\subsection{Conformally invariant $g^{\tau}_W$ associated with an $h$-steep $\tau$} \label{subsec:conformally_invariant_g_W}

Given a spacetime with timelike boundary 
$(\bM, g)$ endowed with a 
temporal function $\tau$, one can write the metric $g$ as  in \eqref{eq:splitting_boundary}: 
\begin{equation*}\label{eq:splitting-tilde}
g = - \Lambda d\tau ^2 + \sigma_{\tau} =: \Lambda (- d\tau ^2 + \bar{\sigma}_{\tau}) =: \Lambda g^\tau,
\end{equation*}
where $\Lambda= |\nabla \tau|^{-2}$ and the last equality defines a representative of the conformal class. {This can be done even if $\bM$ does not split as a product manifold (using the orthogonal splitting
$TM= $ (Span $\nabla t$) $\oplus \nabla t^\perp $ no matter if $\bM$ truly splits).} 

\begin{definition}\label{d_wick} Let  $(\bM, g, \tau)$ be a   spacetime-with-timelike-boundary endowed with 
	a {\em temporal} function $\tau$. 
	The {\em canonical representative} or {\em $\tau$-representative} $g^\tau$ of the conformal class $[g]$ of $g$ is 
		$$g^\tau = \frac{1}{\Lambda}g= -d\tau ^2 + \bar{\sigma}_{\tau}, $$
		(that is,   $\tilde g$ makes unit the  $\tilde g$-gradient of $\tau$; in particular,  $\tau$ is steep for $\tilde g$).	
The Riemannian {\em Wick-rotated} metric of (the conformal class of) $g$ (associated with the temporal function $\tau$) is 
 \begin{equation*}
 	g^{\tau}_W = d\tau ^2 + \bar{\sigma}_{\tau}.
 \end{equation*}
\end{definition}
The Wick-rotated metric is positive definite and conformally invariant. These properties are now immediate, in contrast with the case of the null distance, where additional assumptions were required to establish the former. Next, we will also prove that it is complete when $\tau$ is $h$-steep (for a complete $h$). This will give us a characterization of global hyperbolicity and $h$-steepness, in a similar way as in the case of the null distance.


\subsection{Completeness of the Wick-rotated $g^{\tau}_W$}\label{subsec:4_2} 
%

Let $| \cdot |_W^\tau$, $L_W^\tau$ and $d_W^\tau$ denote the norm, length functional and distance associated with the metric $g_W^{\tau}$. \
We will denote $\nabla^W$ the gradient w.r.t $g_W^\tau$.
We will study the completeness of $g_W^\tau$ for $\tau$ $h$-steep, which clearly implies that $\tau$ must be onto on $\R$ (a property which can be ensured with no loss of generality using a rescaling, see Remark \ref{remark:lipschitz_uptorescaling}(2)).



The following fundamental result will complement those of Section \ref{sec:temporalfunctions}.

\begin{lemma} \label{lemma:g_w-completa}
Let $\tau$ be a temporal function and $v=v^\tau+v^\perp$ a ligthlike vector, where $v^\tau \in$ Span$(\nabla \tau)$ and $v^\perp \in (\nabla \tau)^\perp$. Then:
\begin{equation} \label{e_desc_dtv}
d\tau(v)^2=g_W^\tau(v^\tau,v^\tau)=g_W^\tau(v^\perp,v^\perp) .
\end{equation}
Therefore, for any   Riemannian metric $h$: 
$$d\tau(w)^2 \geq h(w,w), \quad \forall w \; \hbox{causal} 
\qquad \Longrightarrow \qquad  2 g_W^\tau(v,v) \geq h(v,v), \quad \forall v\in TM, $$
(the antecedent meaning that $\tau$ is $h$-steep). In particular, if $h$ is complete, then so is $g_W^\tau$.

\end{lemma}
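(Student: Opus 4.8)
The plan is to work throughout with the canonical representative $g^\tau=-d\tau^2+\bar{\sigma}_\tau$, exploiting two facts about the orthogonal splitting: $\bar{\sigma}_\tau$ annihilates $\mathrm{Span}(\nabla\tau)$ (indeed $\bar{\sigma}_\tau=g^\tau+d\tau^2$, and the $g^\tau$-unit generator $e:=\nabla^{g^\tau}\tau$ satisfies $d\tau(e)^2=1=-g^\tau(e,e)$, whence $\bar{\sigma}_\tau(e,\cdot)=0$), while $d\tau$ annihilates $(\nabla\tau)^\perp$. Since being lightlike is conformally invariant, for the given $v=v^\tau+v^\perp$ I would first expand $0=g^\tau(v,v)=g^\tau(v^\tau,v^\tau)+g^\tau(v^\perp,v^\perp)=-d\tau(v)^2+\bar{\sigma}_\tau(v^\perp,v^\perp)$, using $d\tau(v^\tau)=d\tau(v)$ and $\bar{\sigma}_\tau(v^\tau,\cdot)=0$; hence $\bar{\sigma}_\tau(v^\perp,v^\perp)=d\tau(v)^2$. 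Substituting into $g_W^\tau=d\tau^2+\bar{\sigma}_\tau$, and using again $d\tau(v^\perp)=0$ and $\bar{\sigma}_\tau(v^\tau,v^\tau)=0$, gives at once $g_W^\tau(v^\tau,v^\tau)=d\tau(v^\tau)^2=d\tau(v)^2$ and $g_W^\tau(v^\perp,v^\perp)=\bar{\sigma}_\tau(v^\perp,v^\perp)=d\tau(v)^2$, which is \eqref{e_desc_dtv}.

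For the implication, the plan is to bound the $h$-length of each block of $v=v^\tau+v^\perp$ by its $g_W^\tau$-length and then recombine. The decomposition is $g_W^\tau$-orthogonal (both $d\tau^2$ and $\bar{\sigma}_\tau$ have vanishing cross terms), so $g_W^\tau(v,v)=g_W^\tau(v^\tau,v^\tau)+g_W^\tau(v^\perp,v^\perp)$; write $a:=\sqrt{g_W^\tau(v^\tau,v^\tau)}$ and $b:=\sqrt{g_W^\tau(v^\perp,v^\perp)}$. If $a,b>0$ I would choose $u\in(\nabla\tau)^\perp$ and $u'\in\mathrm{Span}(\nabla\tau)$ with $g_W^\tau(u,u)=a^2$ and $g_W^\tau(u',u')=b^2$, so that a computation as above shows the four vectors $w_\pm:=v^\tau\pm u$ and $w'_\pm:=\pm u'+v^\perp$ to be lightlike, hence causal. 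Applying the hypothesis $d\tau(\cdot)^2\ge h(\cdot,\cdot)$ together with \eqref{e_desc_dtv} gives $\|w_\pm\|_h\le|d\tau(w_\pm)|=a$ and $\|w'_\pm\|_h\le b$; since $v^\tau=\tfrac12(w_++w_-)$ and $v^\perp=\tfrac12(w'_++w'_-)$, the triangle inequality for the Riemannian norm $\|\cdot\|_h$ yields $\|v^\tau\|_h\le a$ and $\|v^\perp\|_h\le b$ (the degenerate cases $a=0$ or $b=0$ are trivial, as then $v^\tau=0$ or $v^\perp=0$). Finally $\|v\|_h\le\|v^\tau\|_h+\|v^\perp\|_h\le a+b$, and the elementary bound $(a+b)^2\le 2(a^2+b^2)=2\,g_W^\tau(v,v)$ gives $h(v,v)\le 2\,g_W^\tau(v,v)$. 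I expect this construction of the auxiliary lightlike vectors, together with pinning down the sharp constant $2$ from $(a+b)^2\le 2(a^2+b^2)$, to be the main point of the argument; everything else is bookkeeping around the splitting.

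The completeness statement then follows by a standard domination argument: from $2g_W^\tau\ge h$ one gets $g_W^\tau\ge\tfrac12 h$ pointwise, hence $L_W^\tau(\gamma)\ge\tfrac1{\sqrt2}L_h(\gamma)$ for every curve and thus $d_W^\tau\ge\tfrac1{\sqrt2}\,d_h$. Consequently every $d_W^\tau$-Cauchy sequence is $d_h$-Cauchy, so it converges in the manifold topology by completeness of $h$; since $d_W^\tau$ also induces the manifold topology, that limit is a $d_W^\tau$-limit, proving $g_W^\tau$ complete. The only care needed is that these metric-geometry facts are applied on a manifold (possibly with boundary), exactly as in the metric arguments already invoked for Theorem~\ref{thm:GHnulldistance-conborde}(i).
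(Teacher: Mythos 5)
Your proof is correct, and for the central implication it takes a genuinely different route from the paper's. The paper argues by cases: when $g_W^\tau(v^\tau,v^\tau)\ge g_W^\tau(v^\perp,v^\perp)$ the vector $v$ is itself causal and the hypothesis applies directly (with constant $1$); when $v\in(\nabla\tau)^\perp$ it tests the hypothesis on the two lightlike vectors $w^{\pm}=\sqrt{g_W^\tau(v,v)}\,\nabla^W\tau\pm v$ and chooses the sign that makes the cross term $\mp 2\sqrt{g_W^\tau(v,v)}\,h(\nabla\tau,v)$ nonnegative, again getting constant $1$; and in the remaining case it restricts $h$ to the plane $\pi_v=\mathrm{Span}\{v^\tau,v^\perp\}$, where, in a $g_W^\tau$-orthonormal basis, the two previous cases force the diagonal entries $a,c\le 1$ of $h|_{\pi_v}$, so its eigenvalues are bounded by the trace $a+c\le 2$. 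You instead prove the blockwise bounds $\|v^\tau\|_h\le a$ and $\|v^\perp\|_h\le b$ (with $a,b$ the $g_W^\tau$-norms of the blocks) uniformly, by writing each block as the average of two lightlike vectors, $v^\tau=\tfrac12(w_++w_-)$ and $v^\perp=\tfrac12(w'_++w'_-)$, and invoking subadditivity of $\|\cdot\|_h$; the constant $2$ then comes from the elementary estimate $(a+b)^2\le 2(a^2+b^2)$ rather than from the trace bound on eigenvalues. Both arguments pivot on the same mechanism -- feeding auxiliary lightlike vectors built from the orthogonal splitting into the $h$-steepness hypothesis, which is exactly what \eqref{e_desc_dtv} makes possible -- but your symmetrization via the triangle inequality plays the role of the paper's sign choice, and you avoid both the case distinction and the $2\times 2$ eigenvalue computation, at the mild price of the auxiliary rescaled vectors $u,u'$ (whose existence is immediate, e.g.\ $u=(a/b)\,v^\perp$ and $u'=(b/a)\,v^\tau$ when $a,b>0$, the degenerate cases being handled as you indicate). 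You also spell out the completeness step, $2g_W^\tau\ge h$ dominating distances so that $d_W^\tau$-Cauchy sequences are $d_h$-Cauchy, which the paper leaves implicit; your observation that this argument is purely metric and hence valid in the presence of a timelike boundary is consistent with how the paper treats Theorem \ref{thm:GHnulldistance-conborde}(i).
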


\begin{proof} The first displayed equation follows from the definition of $g_W^\tau$. 
	 The implication in the second one is trivial for $v\in $ Span$(\nabla \tau)$ as well as when  $v=v^\tau+v^\perp$ with 
\begin{equation*}
	g_W^\tau(v^\tau,v^\tau)\geq g_W^\tau(v^\perp,v^\perp),
\end{equation*}
since it implies that $v$ is causal. For $v \in (\nabla \tau)^{\perp}$, consider the lightlike vectors $$w^{\pm} = \sqrt{g^{\tau}_W (v,v)} \nabla^W \tau \pm v.$$ By $h$-steepness of $\tau$ and \eqref{e_desc_dtv}, we have (notice that  $\nabla \tau =-\nabla^W\tau$):
\begin{equation*}
	g^\tau _W (v,v)= d\tau (w^{\pm}) \geq h(w^{\pm},w^{\pm}) = g^{\tau}_W (v,v) h(\nabla \tau, \nabla \tau) \mp 2\sqrt{g^{\tau}_W (v,v)} h(\nabla \tau, v) + h(v,v),
\end{equation*}
where the term $\mp \sqrt{g^{\tau}_W (v,v)} h(\nabla \tau, v)$ can be chosen positive for $+$ or $-$. Thus $g^\tau _W (v,v) \geq h(v,v)$.
{More generally, if $v$ satisfies}
\begin{equation*}
	g_W^\tau(v^\tau,v^\tau) \leq g_W^\tau(v^\perp,v^\perp),
\end{equation*}
we can consider the $g_{W}^\tau$-orthonormal basis $B=\{\frac{v^\tau}{|v^\tau|},\frac{v^\perp}{|v^\perp|}\}$ of the plane $\pi_v:=$ Span$\{v^\tau,v^\perp\}$.  In this basis  the matrix  $\begin{pmatrix}
a & b\\b & c
\end{pmatrix}$ of $h|_{\pi_v}$ satisfies $0<a,c\leq 1$ (by the two previous cases), $b^2<ac$  and the eigenvalues $\lambda_{\pm} > 0$ (since $h$ is Riemannian), with the latter are not greater than $2$ because $\lambda_+ + \lambda_- = a + c \leq 2$.
\end{proof}

The following result will be the key to relate Lorentz and Riemannian convergences.

\begin{theorem} \label{intro:thm_A}
	Let $(\overline{M}, g)$ be a stably causal spacetime-with-timelike-boundary and choose an onto temporal function $\tau: \bM \rightarrow \R$. The following properties are  equivalent:
	\begin{itemize}
		\item $\tau$ is $h$-steep for a complete metric $h$,
		\item the Wick-rotated (Riemannian) metric $g_W^{\tau}$ is complete.
	\end{itemize}
	In this case, (a) $\tau$ is $h$-steep with respect to 
	$h=g_W^{\tau}/2$, and (b) $\tau$ is a Cauchy temporal function.
\end{theorem}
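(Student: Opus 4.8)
The plan is to deduce the whole statement from the earlier Lemma~\ref{lemma:g_w-completa} together with the short causal computation that establishes (a); indeed, (a) is what drives the nontrivial implication of the equivalence, and (b) then follows from Theorem~\ref{thm:GHnulldistance-conborde}.

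First I would prove (a) as an \emph{unconditional} fact about temporal functions (no completeness needed yet): for any future causal $v$ one has $d\tau(v)^2\geq \tfrac12 g_W^\tau(v,v)$, i.e. $\tau$ is $h$-steep for $h=g_W^\tau/2$. Using the orthogonal splitting $TM=\mathrm{Span}(\nabla\tau)\oplus(\nabla\tau)^\perp$, write $v=v^\tau+v^\perp$. Since $d\tau$ annihilates $(\nabla\tau)^\perp$, one has $d\tau(v)=d\tau(v^\tau)$ and $g_W^\tau(v^\tau,v^\tau)=d\tau(v)^2$, while on $(\nabla\tau)^\perp$ the metrics $g^\tau$ and $g_W^\tau$ both restrict to $\bar{\sigma}_{\tau}$. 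The causal condition $g^\tau(v,v)\leq 0$ then reads $\bar{\sigma}_{\tau}(v^\perp,v^\perp)\leq d\tau(v)^2$, whence
\begin{equation*}
g_W^\tau(v,v)=d\tau(v)^2+\bar{\sigma}_{\tau}(v^\perp,v^\perp)\leq 2\,d\tau(v)^2 .
\end{equation*}
As $d\tau(v)>0$ for future causal $v$ (temporality of $\tau$), this gives $d\tau(v)\geq\|v\|_{g_W^\tau/2}$, which is exactly (a).

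With (a) in hand the equivalence is immediate. The implication ``$\tau$ $h$-steep for a complete $h$ $\Rightarrow$ $g_W^\tau$ complete'' is the final assertion of Lemma~\ref{lemma:g_w-completa} (via the domination $2g_W^\tau\geq h$ and the fact that a metric dominating a complete one is itself complete). Conversely, if $g_W^\tau$ is complete then $h:=g_W^\tau/2$ is complete (completeness is preserved by a constant rescaling), and by (a) the function $\tau$ is $h$-steep for this complete $h$. For (b), place ourselves in the equivalent situation, so that $\tau$ is $h$-steep for a complete $h$; by Theorem~\ref{thm:GHnulldistance-conborde}(i) the null distance $\hat{d}_\tau$ is a complete metric, and then by (ii) $\tau$ is a Cauchy time function (and $(\bM,g)$ is globally hyperbolic). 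Being moreover temporal and onto, $\tau$ is a Cauchy temporal function.

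I expect no serious obstacle: the genuinely analytic step --- turning the pointwise domination $g_W^\tau\geq h/2$ into an implication between completeness of the two metrics --- is already packaged inside Lemma~\ref{lemma:g_w-completa}, and the passage to ``Cauchy'' is delegated to Theorem~\ref{thm:GHnulldistance-conborde}. The only points requiring care are purely algebraic: that $d\tau$ kills $(\nabla\tau)^\perp$, that the splitting is simultaneously orthogonal for $g^\tau$ and $g_W^\tau$, and that the sharp constant $2$ emerges from the causal inequality.
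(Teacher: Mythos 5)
Your proposal is correct and follows essentially the same route as the paper: the forward implication via Lemma~\ref{lemma:g_w-completa}, the converse together with (a) via the computation of Proposition~\ref{p_hsteep_is_time}(1) (which you write out explicitly and with the constant handled cleanly --- your display $g_W^\tau(v,v)=d\tau(v)^2+\bar{\sigma}_\tau(v^\perp,v^\perp)\leq 2\,d\tau(v)^2$ is in fact a correct rendering of the inequality that appears garbled in the paper's proof of that proposition), and (b) by delegation to Theorem~\ref{thm:GHnulldistance-conborde}(i)--(ii). No gaps.
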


\begin{proof} 
If $\tau$ is $h$-steep for a complete metric $h$, Lemma \ref{lemma:g_w-completa} gives us that $g^\tau_W$ is complete. For the converse, as well as assertion (a), the proof is the same as Prop. \ref{p_hsteep_is_time} (1). Assertion (b) was proved in Theorem \ref{thm:GHnulldistance-conborde}.
\end{proof}

As a corollary to {Lemma \ref{lemma:g_w-completa} and Theorem \ref{intro:thm_A},} we obtain a new characterization of global hyperbolicity that also encompasses the case with timelike boundary.

\begin{corollary}\label{coro:wick-rotated_globally hyperbolic}
		A spacetime-with-timelike-boundary $(\overline{M},g)$ is globally hyperbolic if and only if it admits a temporal function $\tau$ such that $g_W^{\tau}$ is a complete Riemannian metric.
\end{corollary}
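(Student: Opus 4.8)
The plan is to read this corollary off as a direct assembly of Lemma \ref{lemma:g_w-completa} and Theorem \ref{thm:GHnulldistance-conborde}, with no new analytic content: both implications reduce to choosing which half of a prior result to invoke. The one point worth isolating first is that \emph{any} temporal function $\tau$ is automatically $h$-steep for $h = g^\tau_W/2$, independently of completeness. Indeed, the first identity in Lemma \ref{lemma:g_w-completa}, $d\tau(v)^2 = g^\tau_W(v^\tau,v^\tau)$, uses only $g^\tau_W = d\tau^2 + \bar\sigma_\tau$ and $v^\perp \in \ker d\tau$, so it holds for every $v \in TM$ (the lightlike hypothesis in \eqref{e_desc_dtv} is needed only for the second equality). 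For a future causal $v = v^\tau + v^\perp$ one has $g^\tau_W(v^\tau,v^\tau) \ge g^\tau_W(v^\perp,v^\perp)$, whence $g^\tau_W(v,v) \le 2\,g^\tau_W(v^\tau,v^\tau) = 2\, d\tau(v)^2$, i.e. $d\tau(v)^2 \ge \tfrac12 g^\tau_W(v,v) = h(v,v)$. Since $d\tau(v) > 0$ on future causal vectors, this is exactly $h$-steepness for $h = g^\tau_W/2$. This is what converts completeness of $g^\tau_W$ into $h$-steepness for a complete $h$.

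For the forward implication I would start from global hyperbolicity and apply Theorem \ref{thm:GHnulldistance-conborde}(iii) to obtain an (adapted Cauchy) temporal function $\tau$ that is $h$-steep with respect to a complete Riemannian metric $h$. Lemma \ref{lemma:g_w-completa} then gives the pointwise bound $2\,g^\tau_W \ge h$ and, as recorded there, completeness of $h$ forces completeness of $g^\tau_W$. Thus $\tau$ is the temporal function required by the statement.

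For the converse, suppose $(\bM, g)$ carries a temporal function $\tau$ with $g^\tau_W$ complete. The mere existence of a temporal function makes the spacetime stably causal, so we are in the setting of the earlier results. By the observation of the first paragraph, $\tau$ is $h$-steep for $h = g^\tau_W/2$, which is complete precisely because $g^\tau_W$ is. Theorem \ref{thm:GHnulldistance-conborde}(i) then yields completeness of $\hat d_\tau$, and Theorem \ref{thm:GHnulldistance-conborde}(ii) concludes that $(\bM, g)$ is globally hyperbolic; alternatively one may quote the equivalence (a)$\,\Leftrightarrow\,$(b) of that theorem directly.

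I do not expect a genuine obstacle, since the hard work is already packaged into Lemma \ref{lemma:g_w-completa} and Theorem \ref{thm:GHnulldistance-conborde}. The only care needed is to avoid an apparent circularity with Theorem \ref{intro:thm_A}, whose onto hypothesis on $\tau$ is \emph{not} required here: items (i) and (ii) of Theorem \ref{thm:GHnulldistance-conborde} are stated for time functions without any surjectivity assumption. Should one nonetheless prefer to route the converse through Theorem \ref{intro:thm_A}, surjectivity of $\tau$ comes for free, since completeness of $g^\tau_W$ makes the unit field $\nabla^W\tau$ (of bounded $g^\tau_W$-norm, hence with complete flow) one along which $\tau$ grows at unit rate, so $\tau(\bM) = \R$.
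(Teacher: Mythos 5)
Your proof is correct and takes essentially the same route as the paper, which obtains this corollary precisely by assembling Lemma \ref{lemma:g_w-completa} with Theorem \ref{intro:thm_A} (whose assertion (a) is exactly your opening observation that any temporal $\tau$ is $h$-steep for $h=g^\tau_W/2$) and the equivalences of Theorem \ref{thm:GHnulldistance-conborde}. Your attention to the onto hypothesis of Theorem \ref{intro:thm_A} --- bypassed by quoting Theorem \ref{thm:GHnulldistance-conborde}(i)--(ii) directly, or recovered via the complete unit flow of $\nabla^W\tau$ --- is a precision the paper leaves implicit but does not alter the argument.
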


\begin{remark}\label{remark:g_w-completa_integralcurrent}
	(1) If a Wick-rotated metric $g_{W}^{\tau}$ is complete, then the canonical representative of the conformal class 
	$g = -d\tau ^2 + \bar{\sigma}_\tau$ restricts as a complete Riemannian metric $\bar{\sigma}_{\tau_0}$ on each slice $\{\tau_0\} \times \bar{\Sigma}$. In particular, our Lemma \ref{lemma:g_w-completa} gives sufficient conditions for the slices to be complete.
	
	(2) One of the authors constructed an example $(M,g)$ where $\tau$ is a Cauchy temporal function  and $g$ is the canonical representative of the conformal class, but it has incomplete slices, see   \cite[\S 6.4]{sanchez:slicings}. This cannot occur in the case that $\tau$ is $h$-steep (with $h$ complete), as proved in Lemma \ref{lemma:g_w-completa}.	
	\end{remark}

	\begin{remark}\label{r_correccion}
    García-Heveling and Burtscher \cite[Remark 2.7]{GHnulldistance} point out that the existence of $h$-steep temporal functions $\tau$ in globally hyperbolic spacetimes  implies that $\hat d_\tau$ admits a natural structure of local integral current. This relies on the fact that $\tau$ fulfills  the sufficient conditions on completeness imposed by Allen and Burtscher in \cite[Theorem 1.3]{AllenBurtscher2022Properties}. Notice that analogous conditions on completeness are also satisfied in the case of globally hyperbolic spacetimes-with-timelike-boundary, suggesting the extension of results about  integral currents to the case with boundary.
\end{remark}

\subsection{The metric $d_W^\tau$ and the encoding of causality}
\label{subsec:4_3}

Following Burtscher and Garc\'ia-Heveling,  Proposition \ref{thm:leonardo_codificacion} establishes that the null distance associated with a {\em locally anti-Lipschitz Cauchy time function} encodes the causal structure in globally hyperbolic spacetimes-with-timelike-boundary. The Wick-rotated distance $d_W^\tau$, in turn, trivially satisfies:

\begin{proposition} For any globally hyperbolic spacetime-with-timelike-boundary and {Cauchy temporal} function $\tau$:
	\begin{itemize}
		\item[(i)] The projection $\bM=\bSigma \times \R \rightarrow \R$ for the $\tau$-representative is a Riemannian submersion:
		$d_W^\tau (p,q)\geq |\tau(q)-\tau(p)|$.
	
	\item[(ii)] For any couple of points $p,q\in \bM$ connected by an (unbroken) lightlike segment $\gamma$, the following identity holds: 
	$L_W^\tau(\gamma)=\sqrt{2} \, |\tau(q)-\tau(p)|$.
	\end{itemize}
	In particular, if $p\leq q$ then
	$$|\tau(q)-\tau(p)| \leq d_W^\tau (p,q) \leq  \sqrt{2}  \, |\tau(q)-\tau(p)|.$$
\end{proposition}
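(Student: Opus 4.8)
The plan is to establish the two displayed items separately and then combine them, since the final two-sided estimate follows immediately once (i) and (ii) are in place (given that a causal pair $p \leq q$ is joined by a future-directed causal curve, and one may compare it against a lightlike path to obtain the upper bound).

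For item (i), I would recall that for the $\tau$-representative the Wick-rotated metric is $g_W^\tau = d\tau^2 + \bar\sigma_\tau$, which is an \emph{orthogonal} sum in which the $d\tau^2$-factor is exactly the pullback of the standard metric on $\R$ under the projection $\pi:\bM = \bSigma \times \R \to \R$. Thus $\pi$ is a Riemannian submersion essentially by construction: for any tangent vector $v = v^\tau + v^\perp$ one has $|v|_W^2 = d\tau(v)^2 + \bar\sigma_\tau(v^\perp,v^\perp) \geq d\tau(v)^2 = |d\pi(v)|^2$. Integrating along any $g_W^\tau$-curve $\gamma$ from $p$ to $q$ gives $L_W^\tau(\gamma) \geq |\tau(q) - \tau(p)|$, since the length of the projected curve in $\R$ is at least $|\tau(q)-\tau(p)|$; taking the infimum over all such curves yields $d_W^\tau(p,q) \geq |\tau(q)-\tau(p)|$.

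For item (ii), the key computational input is Lemma \ref{lemma:g_w-completa}: if $\gamma$ is an unbroken lightlike segment with velocity $v = v^\tau + v^\perp$ (decomposed along $\Span(\nabla\tau)$ and $(\nabla\tau)^\perp$), then \eqref{e_desc_dtv} gives $d\tau(v)^2 = g_W^\tau(v^\tau,v^\tau) = g_W^\tau(v^\perp,v^\perp)$. Hence the $g_W^\tau$-norm of the velocity satisfies $|v|_W^2 = g_W^\tau(v^\tau,v^\tau) + g_W^\tau(v^\perp,v^\perp) = 2\, d\tau(v)^2$, so $|v|_W = \sqrt{2}\,|d\tau(v)|$ pointwise. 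Since $\tau$ is strictly monotone along the causal curve $\gamma$, $d\tau(v)$ has a constant sign, and integrating gives
\begin{equation*}
L_W^\tau(\gamma) = \int |v|_W \, ds = \sqrt{2}\int |d\tau(v)|\, ds = \sqrt{2}\,\Bigl| \int d\tau(v)\, ds\Bigr| = \sqrt{2}\,|\tau(q)-\tau(p)|,
\end{equation*}
as claimed.

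Finally, for the two-sided estimate, suppose $p \leq q$. The lower bound $|\tau(q)-\tau(p)| \leq d_W^\tau(p,q)$ is just item (i). For the upper bound, since $p \leq q$ there is a future-directed causal curve joining them, and I would argue that one can connect $p$ to $q$ by a (possibly broken, but here unbroken is claimed) lightlike path realizing, up to the factor $\sqrt2$, the time separation; invoking (ii), $d_W^\tau(p,q) \leq L_W^\tau(\gamma) = \sqrt{2}\,|\tau(q)-\tau(p)|$. The main obstacle I anticipate is precisely this last point: guaranteeing the existence of an \emph{unbroken} lightlike geodesic segment between causally related $p$ and $q$ is not automatic in a general globally hyperbolic spacetime-with-timelike-boundary (causal relation only gives a causal, not necessarily lightlike, connecting curve). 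I would expect the intended reading to be that (ii) applies whenever such a lightlike segment exists, and the upper bound in the final display should be understood as holding along any lightlike connecting segment (or via an approximation by piecewise-lightlike curves whose null length controls $d_W^\tau$); clarifying this existence/approximation issue is where the real care is needed, whereas the norm computations are routine consequences of the orthogonal splitting and Lemma \ref{lemma:g_w-completa}.
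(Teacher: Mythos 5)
Your computations for items (i) and (ii) are correct, and they are exactly what the paper intends: the proposition is stated there without proof (``trivially satisfies''), as an immediate consequence of the orthogonal splitting $g_W^\tau = d\tau^2 + \bar{\sigma}_\tau$ and the identity \eqref{e_desc_dtv} from Lemma \ref{lemma:g_w-completa}. The genuine gap is in your treatment of the final two-sided estimate. You attempt to derive the upper bound from item (ii), and therefore need an \emph{unbroken} lightlike segment joining $p$ and $q$; you correctly observe that such a segment need not exist (e.g.\ when $p \ll q$), but your proposed resolutions --- reinterpreting the statement, or an unspecified approximation by piecewise null curves --- leave the claim unproved. The point you miss is that the upper bound requires only an \emph{inequality}, and that inequality holds pointwise for \emph{every} future-directed causal vector, not just lightlike ones: if $v = v^\tau + v^\perp$ is future causal for $g^\tau = -d\tau^2 + \bar{\sigma}_\tau$, then $-d\tau(v)^2 + \bar{\sigma}_\tau(v^\perp,v^\perp) \leq 0$, whence
\begin{equation*}
|v|_W^2 \;=\; d\tau(v)^2 + \bar{\sigma}_\tau(v^\perp,v^\perp) \;\leq\; 2\, d\tau(v)^2 ,
\qquad \text{i.e.} \qquad |v|_W \leq \sqrt{2}\, d\tau(v).
\end{equation*}
Integrating this along \emph{any} future-directed causal curve $\gamma$ from $p$ to $q$ --- which exists by the very definition of $p \leq q$, causality being conformally invariant --- gives $d_W^\tau(p,q) \leq L_W^\tau(\gamma) \leq \sqrt{2}\,(\tau(q)-\tau(p))$, and the lower bound is your item (i). Thus item (ii) is the \emph{equality case} of this estimate, not its source, and the existence/approximation issue you flagged simply dissolves.

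For completeness: your sketched fallback via piecewise lightlike curves could be made to work (each future-directed null segment contributes $\sqrt{2}$ times its $\tau$-increment by (ii), and the increments telescope to $\tau(q)-\tau(p)$), but you would still owe a proof that causally related points in a spacetime-with-timelike-boundary can be joined by such a path, which is more delicate than the one-line causal-vector inequality above and entirely unnecessary.
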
 
However, the converse does not hold: the Wick-rotated distance $d_W^\tau$ does not encode causality (as the null distance does in Proposition \ref{thm:leonardo_codificacion}), even when $(\bM, d_W^\tau)$ is a complete -- and hence globally hyperbolic -- metric space, and even if the definition of $d_W^\tau$ is restricted to piecewise causal curves. An explicit counterexample is provided in Appendix \ref{app:no_codifica}.

Indeed, the null distance $\hat{d}_\tau$ encodes causality because, unlike the broad class of piecewise smooth curves used to define $d_W^\tau$, $\hat{d}_\tau$ is constructed using piecewise causal curves -- a class that inherently reflects the causal structure of the spacetime. Indeed, the factor $\sqrt{2}$ was introduced because it is optimal for lightlike curves and {\em if one were to define $d_W^\tau$ using only piecewise null curves\footnote{This class of curves can also be considered to compute the null distance \cite[Remark 3.7]{sormanivega}.}, a similar encoding of causality would emerge}.

More precisely, using $g_W^\tau$ in combination with the temporal function define a null distance-type metric now in terms of $L_W^\tau$, namely
$$\hat{d}_W^\tau (p,q) := \inf \{L_W^\tau (\alpha): \beta \text{ is a piecewise null curve from } p \text{ to } q\}.$$
Clearly, if $\alpha$ is a lightlike geodesic from $p$ to $q$ then
	$$L_W^\tau (\alpha) = \sqrt{2} \hat{L}_\tau (\alpha).$$
Consequently:
\begin{proposition}\label{prop:d_W_encoding}
The identity $\hat d_W^\tau= \sqrt{2} \hat d_\tau$ holds
(thus, $\hat d_W^\tau/\sqrt{2}$ also encodes causality).
\end{proposition}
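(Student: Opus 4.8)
The plan is to reduce the asserted global identity to a pointwise comparison of the two norms on null vectors and then integrate. First I would record the pointwise fact underlying the preliminary observation $L_W^\tau(\alpha)=\sqrt2\,\hat L_\tau(\alpha)$ for lightlike geodesics, namely that it already holds for arbitrary null vectors. If $v$ is null, decompose $v=v^\tau+v^\perp$ with $v^\tau\in\Span(\nabla\tau)$ and $v^\perp\in(\nabla\tau)^\perp$. This splitting is also $g_W^\tau$-orthogonal (its factors are the $\partial_\tau$-direction and the slice directions of the $\tau$-representative, and $g_W^\tau=d\tau^2+\bar{\sigma}_\tau$ pairs them to zero), so by \eqref{e_desc_dtv} of Lemma~\ref{lemma:g_w-completa} one gets $g_W^\tau(v,v)=g_W^\tau(v^\tau,v^\tau)+g_W^\tau(v^\perp,v^\perp)=2\,d\tau(v)^2$, hence $|v|_W^\tau=\sqrt2\,|d\tau(v)|$.

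Next I would promote this identity to lengths of curves. Let $\alpha$ be any piecewise null curve with break points $x_0,\dots,x_k$. On each null segment $\tau$ is temporal and therefore $\tau\circ\alpha$ is strictly monotone, so $d\tau(\alpha')$ has constant sign there and $\int|d\tau(\alpha')|\,ds=|\tau(x_i)-\tau(x_{i-1})|$. Integrating the pointwise identity and summing over the pieces yields $L_W^\tau(\alpha)=\sqrt2\sum_{i}|\tau(x_i)-\tau(x_{i-1})|=\sqrt2\,\hat L_\tau(\alpha)$ for every piecewise null $\alpha$, so the geodesic hypothesis in the preliminary computation turns out to be unnecessary. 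Taking the infimum over all piecewise null curves from $p$ to $q$ then gives $\hat d_W^\tau(p,q)=\sqrt2\,\inf\{\hat L_\tau(\alpha):\alpha\text{ piecewise null}\}$.

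It remains to identify this last infimum with $\hat d_\tau(p,q)$, i.e.\ to show that restricting the Sormani--Vega null length to piecewise null rather than piecewise causal curves leaves the infimum unchanged. This is the only genuinely non-formal point, and I would settle it by invoking \cite[Remark 3.7]{sormanivega}: every piecewise causal curve can be approximated by a piecewise null (zigzag) curve whose $\tau$-increments along the segments converge to those of the original, so the two infima coincide. With this, $\hat d_W^\tau=\sqrt2\,\hat d_\tau$; and since $\hat d_\tau$ encodes causality by Proposition~\ref{thm:leonardo_codificacion}, so does $\hat d_W^\tau/\sqrt2$. The main obstacle is exactly this equivalence of curve classes --- everything else is a direct consequence of Lemma~\ref{lemma:g_w-completa} and the temporality of $\tau$.
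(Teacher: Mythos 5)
Your proposal is correct and follows essentially the same route as the paper, which (very tersely) derives the proposition from the length identity $L_W^\tau(\alpha)=\sqrt{2}\,\hat L_\tau(\alpha)$ together with the footnoted appeal to \cite[Remark 3.7]{sormanivega} for computing $\hat d_\tau$ via piecewise null curves. You in fact supply more detail than the paper does: the pointwise identity $|v|_W^\tau=\sqrt{2}\,|d\tau(v)|$ from \eqref{e_desc_dtv} of Lemma~\ref{lemma:g_w-completa}, the monotonicity of $\tau$ on each null segment needed to integrate it, and the observation that the paper's restriction to lightlike \emph{geodesics} is unnecessary --- all correctly filled in.
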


%
%

This result, together with the previous ones, suggests that the null distance incorporates all the information about causality. Therefore, using the conformal splitting given directly by the temporal function entails no loss of generality in relation to the null distance.

\subsection{Stability of $h$-steep  functions and adaptability to $\partial M$}\label{subsec:4_4}
The stability of a property satisfied by the metric $g$ (or by a related element, as time functions in Remark \ref{remark:weak_temporal} (b)), means that the property holds not only for $g$ itself but also by any metric within a $C^0$-neighborhood of $g$. In the case of conformally invariant properties -- such as global hyperbolicity  or, more restrictively, the Cauchy character of the slices of a Cauchy temporal function $\tau$ -- stability means that the property remains valid just under a slight widening of the light cones of g (since it would then automatically hold for any metric with narrower cones)\footnote{This also occurs for all the causal properties considered here. However, in the standard causal ladder  of spacetimes, this is not true for {\em  causal continuity} and {\em causal simplicity} \cite{garciaparrado-sanchez}, where one must also examine narrower cone structures to fully capture these properties.}. As checked in  \cite[Theorem 4.2 and Remark 4.9]{zepp:structure}, being Cauchy and steep are stable properties for any temporal function on a spacetime. 

Furthermore, Bernard and Suhr \cite{Bernard2020Cauchyanduniform} proved that being an $h$-steep function (with $h$ complete) is a stable property for {\em hyperbolic cone fields}, which include globally hyperbolic spacetimes-with-timelike-boundary. 

With respect to the adaptability to the boundary, the fact that   $\nabla \tau$ is  tangent to $\partial M$ is conformally invariant but, clearly, it is not a stable property.\footnote{Notice that, here, it is not relevant if one chooses the gradient for the original metric, its $\tau$-representative or the Wick-rotated one.} However, it is possible to obtain a (globally hyperbolic) metric $g'$ with $g < g'$ such that the $g'$-gradient of $\tau$, $\nabla' \tau$, remains tangent to $\partial M$ and $\tau$ is still Cauchy temporal for $g'$. To prove this, we will follow the ideas in the proof of \cite[Thms 4.1 and 4.2]{zepp:structure}.

\begin{theorem}\label{thm:stability} Let $(\bM,g)$ be a (necessarily globally hyperbolic) spacetime-with-timelike-boundary and $\tau$ an $h$-steep (resp. Cauchy steep; $h$-steep and steep) function adapted to the boundary. Then, there is a metric $g'$ with wider cones ($g<g'$) such that 
$\tau$ is also $h'$-steep (resp. Cauchy steep; $h'$-steep and steep) for a complete (possibly different) Riemannian metric $h'$ and adapted to $\partial M$ for $g'$ (i.e. $\nabla' \tau$ remains tangent to $\partial M$).

Moreover, the Wick-rotated metric of $g'$ is also complete. 
\end{theorem}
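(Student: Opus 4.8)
The plan is to work inside the $\tau$-adapted orthogonal splitting and to widen the cones by an explicit modification that respects it, so that the one genuinely delicate feature --- tangency of $\nabla\tau$ to $\partial M$, which (unlike being Cauchy, steep or $h$-steep) is \emph{not} stable --- is kept by construction. First I would note that in each of the three cases $\tau$ is an adapted Cauchy temporal function: in the cases involving $h$-steepness this holds because $h$-steepness for a complete $h$ forces global hyperbolicity and the Cauchy character (Theorems~\ref{intro:thm_A} and \ref{thm:GHnulldistance-conborde}), and in the Cauchy steep case it holds by hypothesis. Hence Theorem~\ref{theorem:splitting_boundary} yields the product splitting $\bM=\R\times\bSigma$ with $g=-\Lambda\,d\tau^2+\sigma_\tau$, in which the coordinate field $\partial_\tau$ is orthogonal to the slices and tangent to $\partial M=\R\times\partial\bSigma$; write $\bar\sigma_\tau=\sigma_\tau/\Lambda$ for the spatial part of the $\tau$-representative.

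Next I would fix a constant $\kappa\in(0,1)$ (close to $1$ in the Cauchy steep case) and set $g':=-d\tau^2+\kappa\,\bar\sigma_\tau$, i.e. I scale only the spatial part of the $\tau$-representative. Three properties are then immediate. Cone-widening: a nonzero $g$-causal vector $v=a\partial_\tau+w$ satisfies $a^2\ge\bar\sigma_\tau(w,w)$, whence $g'(v,v)=-a^2+\kappa\,\bar\sigma_\tau(w,w)\le(\kappa-1)a^2<0$, so $v$ is $g'$-timelike and $g<g'$. Adaptability: the modification touches only slice directions, so $\partial_\tau$ remains $g'$-orthogonal to the slices and $\nabla'\tau=-\partial_\tau$ stays tangent to $\partial M$ --- this is the whole reason for modifying $g$ within the adapted splitting instead of invoking an abstract stability statement. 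Steepness: $g'$ is its own $\tau$-representative, so $\nabla'\tau$ is $g'$-unit and $\tau$ is steep for $g'$ (relevant to the second and third cases).

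For the completeness and $h'$-steepness I would use a Wick-rotated comparison. The Wick-rotated metric of $g'$ is ${g'}^\tau_W=d\tau^2+\kappa\,\bar\sigma_\tau\ge\kappa\,(d\tau^2+\bar\sigma_\tau)=\kappa\,g_W^\tau$. In the two cases with $h$-steepness the metric $g_W^\tau$ is complete by Theorem~\ref{intro:thm_A}; since ${g'}^\tau_W\ge\kappa\,g_W^\tau$ gives $d_{{g'}^\tau_W}\ge\sqrt{\kappa}\,d_{g_W^\tau}$, every ${g'}^\tau_W$-Cauchy sequence is $g_W^\tau$-Cauchy and hence convergent, so ${g'}^\tau_W$ is complete. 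As $\tau$ is a temporal (hence time) function for $g'$, the spacetime $(\bM,g')$ is stably causal, and Theorem~\ref{intro:thm_A} then delivers simultaneously that $\tau$ is $h'$-steep for the complete metric $h'={g'}^\tau_W/2$, that $\tau$ is Cauchy temporal for $g'$, and that the Wick-rotated metric of $g'$ is complete. In the Cauchy steep case, where $g_W^\tau$ need not be complete, I would instead obtain the Cauchy (and steep) character of $\tau$ for $g'$ from its stability under cone-widenings \cite{zepp:structure}, choosing $\kappa$ close to $1$.

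The main obstacle is reconciling the tangency $\nabla'\tau\parallel\partial M$ with the stability/splitting machinery, which is stated for manifolds without boundary. Following \cite{zepp:structure} and \cite{Bernard2020Cauchyanduniform}, I expect to handle this through the double-manifold device: double $\bM$ across $\partial M$ with its reflection isometry, extend $g$ and the reflection-invariant $\tau$, run the construction and the completeness/stability estimates equivariantly, and restrict back; invariance forces $\nabla'\tau$ to remain tangent to the fixed locus $\partial M$, while completeness is inherited under doubling. The delicate point to monitor is that the equivariant construction keep $\tau$ simultaneously steep, $h'$-steep (for a complete $h'$) and adapted --- precisely the combination shown attainable in Theorem~\ref{thm:GHnulldistance-conborde}(iii), which the present statement upgrades to a widening $g<g'$.
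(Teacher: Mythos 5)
Your deformation is, up to conformal factor, the same one the paper uses: writing $g=-\Lambda d\tau^2+\sigma_\tau$, your $g'=-d\tau^2+\kappa\bar\sigma_\tau$ coincides (conformally) with the paper's $g'=g-\alpha\, d\tau^2$ for the particular choice $\alpha=\Lambda(1-\kappa)/\kappa$, and the two proofs share the decisive observation that a deformation preserving the $\tau$-orthogonal decomposition leaves the direction of $\nabla'\tau$ equal to that of $\nabla\tau$, so adaptability to $\partial M$ is automatic (no doubling is needed here; your final paragraph is superfluous, since the splitting-preserving deformation already secures tangency, and the stability results you cite --- \cite{zepp:structure} and \cite{Bernard2020Cauchyanduniform} for closed cone fields --- already cover the boundary case). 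In the two branches involving $h$-steepness your argument is complete and essentially identical to the paper's: from ${g'}^{\tau}_W\geq\kappa\, g^{\tau}_W$ (the paper's version is $\tfrac12 g_W^\tau<{g'}^{\tau}_W$, forced by $\alpha<\Lambda$) you get completeness of the Wick-rotated metric, and Theorem~\ref{intro:thm_A} then yields at once the $h'$-steepness for $h'={g'}^{\tau}_W/2$, the Cauchy character, and the final completeness assertion; steepness for $g'$ is immediate since $\nabla'\tau$ is $g'$-unit.

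The genuine gap is in the pure ``Cauchy steep'' branch, where no complete Wick metric is available. There you invoke stability of the Cauchy (and steep) character ``choosing $\kappa$ close to $1$,'' but stability in \cite{zepp:structure} is a $C^0$-\emph{fine} statement: it guarantees \emph{some} metric $g''>g$ below which the property persists, and the admissible widening encoded by $g''$ may have to shrink towards the cones of $g$ at infinity. A constant $\kappa<1$ widens the cones \emph{uniformly} with respect to the normalization $\bar\sigma_\tau$, and no choice of $\kappa$ close to $1$ forces $g'_\kappa<g''$; compare the phenomenon in \cite[\S 6.4]{sanchez:slicings} and \cite[Ex.~4.5]{GHnulldistance}, where $\tau$ is Cauchy steep with unit gradient yet $\hat d_\tau$ is incomplete --- precisely the situation in which slightly-faster-than-causal curves can escape, so a uniform opening of the cones is not licensed. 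This is why the paper takes $g'=g-\alpha\, d\tau^2$ with a suitable \emph{function} $0<\alpha<\Lambda$ dictated by the constructive procedure of \cite{zepp:structure}, rather than a constant ratio. The repair is small: replace your constant $\kappa$ by the function $\kappa(x)=\Lambda/(\Lambda+\alpha)$ supplied by that construction; your adaptability and Wick-comparison arguments go through verbatim, since they only used that the deformation respects the orthogonal decomposition and that $\kappa$ is bounded below by a positive constant (here $\tfrac12$, from $\alpha<\Lambda$).
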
 

\begin{proof}

For the adaptability to the boundary for $g'$, just notice that the constructive procedure in \cite{zepp:structure} (maintaining the steep structure) uses a deformation type $g'=
g-\alpha d\tau^2$ for a suitable function $0<\alpha<\Lambda$. This metric preserves the orthogonal decomposition, and so, the directions of  $\nabla' \tau$ and $\nabla \tau$, which are preserved, must coincide.\footnote{However, not all the metrics $g''$ satisfying $g<g''<g'$ will preserve these property, in general.} 

%

The last assertion is now immediate since $\frac{1}{2}g_W^\tau < {g'}_W^\tau$ for $0 < \alpha < \Lambda$.
\end{proof}

\begin{remark}\label{remark:convenient_is_stable}
Note that the Riemannian metric $h'$ can be different for $g'$, since the cones are wider. This is also the case in \cite{Bernard2020Cauchyanduniform}, where it is proved that $\tau$ is $\frac{1}{4} h$-steep for the enlarged cone structure.
\end{remark}

\section{Convergence of semi-Riemannian manifolds}\label{sec:convergence_semi}


In this section, we address the challenges of studying convergence in the semi-Riemannian setting, highlighting key differences from the well-established Riemannian theory. We introduce a notion of smooth convergence suited to semi-Riemannian manifolds and prove the uniqueness of the limit within any convergent sequence.

We begin with a brief review of smooth convergence for Riemannian manifolds, following the framework established in \cite[Definition 1.1]{Anderson2004CheegerGromov}.

%

\subsection{Brief review on convergence for Riemannian manifolds}\label{subsec:5_1}

The notion of convergence considered here is always uniform on compact subsets. For sequences of tensor fields defined on a fixed manifold, or for sequences of maps between two given manifolds, this concept has a standard and well-understood meaning. However, when dealing with sequences of Riemannian manifolds, the presence of intermediary diffeomorphisms necessitates a more refined approach. To set the stage, we begin by recalling the definition of smooth convergence of Riemannian manifolds following standard references such as \cite{Petersen, Chow2007ricciflow}.


\subsubsection{Setting and uniqueness of the limits}

\begin{definition}[Cheeger-Gromov convergence of Riemannian manifolds] \label{def:riemann_convergence}
	A sequence of pointed complete Riemannian manifolds $\{(M_i, g_i, p_i)\}_{i\in\mathbb{N}}$ is said to {\em converge in the (pointed) $C^{k, \alpha}$ topology} to a pointed complete Riemannian manifold $(M,g,p)$ if there exist
	\begin{enumerate}[label=(\roman*)]
		\item an exhaustion $\{U_i\}_{i \in \N}$ of $M$ by open sets with $p \in U_i$, and
		\item a sequence $\{\phi_i\}_{i \in \N}$ of embeddings $\phi_i : U_i \to V_i := \phi_i (U_i) \subset M_i$ with $\phi_i^{-1} (p_i) \rightarrow p$,\footnote{The usual definition of Cheeger-Gromov convergence requires that $\phi_i (p) = p_i$ for all $i$. However, this condition can be relaxed to convergence of the basepoints in the Riemannian case with no further problem (c.f. \cite{bamler2007ricci}). We choose this weaker condition to compare with the semi-Riemannian case and explore the difficulties that this weakening bring to the convergence.}
	\end{enumerate}
	such that for a locally finite collection $\{\varphi_\beta\}_{\beta\in\mathcal{B}}$ of charts covering $M$, we have that 
	\begin{equation*}
		(\phi_i^* g_i)_{jk} \xrightarrow{C^{k, \alpha}} g_{jk}\quad\hbox{uniformly on compact sets,}
	\end{equation*}
	where $(\phi_i^* g_i)_{jk}$ and $g_{jk}$ are the local component functions of the metrics in the charts $\{\varphi_\beta\}_{\beta\in\mathcal{B}}$.
\end{definition}

\begin{remark} \label{remark:riemann_convergence}
	\begin{enumerate}[label=(\Alph*)]
		\item When working with $C^k$ convergence one can trivially deduce 
		that for any compact subset $K \subset M$ and every $\epsilon > 0$, there exists $i_0 = i_0 (\epsilon)$ such that for $i \geq i_0$,
		$$\sup_{0 \leq r \leq k} \sup_{x\in K} |\nabla^{r} (\phi_i ^* g_i - g)|_h < \epsilon,$$
		where the covariant derivative $\nabla$ is respect to an auxiliary Riemannian metric $h$. Since we work on compact sets, the choice of the metric $h$ on $K$ does not affect the convergence, in particular, we can choose $h$ equal to the limit metric $g$. Also, w.l.o.g., one can assume that $U_i = B_g(p,i)$, the $g$-ball of center $p$ and radius $i$. This perspective also provides an alternative formulation of $C^k$ convergence in Definition~\ref{def:riemann_convergence}, as in \cite{Chow2007ricciflow}, which is used in the proof of the standard compactness result (Theorem~\ref{thm:compactness_theorem_riemann} below).

		\item The {\em basepoint} (or origin) is carried along with the manifold and the Riemannian metric to indicate which regions of the manifolds in the sequence remain on the focus -- an essential feature for ensuring the uniqueness of the limit (see c.f. \cite[Example~8.2.2]{HopperAndrews2011ricciflow}). This framework allows one to compare sequences of different Riemannian manifolds with the limit in both non-compact and compact spaces (notice that in the former case the diameters may be finite but diverging). In the case where all the manifolds (including the limit) are compact, the maps $\phi_i : M \to M_i$ are global diffeomorphisms (up to a finite number), making it possible to consider  unpointed convergence.
		
	\end{enumerate}
\end{remark}

\begin{convention}
 {\em    For simplicity, in what follows, the convergence of (semi-)Riemannian manifolds will be assumed $C^k$ with $k=0,1,  \dots, \infty$, except if otherwise specified.
 Thus, when we say that a sequence of Riemannian metrics \textit{converges in $C^k$ (via diffeomorphisms $\{\phi_i\}$)}, we mean convergence in the sense of Definition \ref{def:riemann_convergence}, which is equivalent to condition~(A) in Remark \ref{remark:riemann_convergence}.}

\end{convention}

The following result is foundational for  the Riemannian setting.

\begin{theorem}\label{prop:uniqueness_riemann_convergence}
	If a sequence $\{(M_i, g_i, p_i)\}$ of pointed complete Riemannian manifolds converges in $C^0$, then the limit is unique up to isometries.
\end{theorem}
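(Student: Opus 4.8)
The plan is to prove uniqueness by exhibiting a smooth isometry between any two $C^0$-limits $(M,g,p)$ and $(\tilde M,\tilde g,\tilde p)$ of the same sequence. Let $\phi_i:U_i\to V_i\subset M_i$ and $\tilde\phi_i:\tilde U_i\to\tilde V_i\subset M_i$ be the embeddings witnessing convergence to the two limits, with $\phi_i^{-1}(p_i)\to p$ and $\tilde\phi_i^{-1}(p_i)\to\tilde p$. The natural candidate for the isometry is a limit of the transition maps $\psi_i:=\tilde\phi_i^{-1}\circ\phi_i$, defined on the open sets $\phi_i^{-1}(\tilde V_i)\cap U_i\subset M$, which (by the exhaustion property) eventually contain any prescribed compact subset of $M$.

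First I would record the metric-level meaning of $C^0$-convergence: on any compact $K\subset M$ and for every $\epsilon>0$, for large $i$ one has $(1-\epsilon)g\le\phi_i^*g_i\le(1+\epsilon)g$ as quadratic forms on $K$, and likewise for $\tilde\phi_i$. Consequently each $\psi_i$ is an \emph{almost isometry} in the metric sense: it distorts the Riemannian distances by a factor tending to $1$ uniformly on compact sets, so that $d_{\tilde g}(\psi_i(x),\psi_i(y))\to d_g(x,y)$. In particular the $\psi_i$ are uniformly locally Lipschitz, hence equicontinuous.

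Next I would apply Arzel\`a--Ascoli. Equicontinuity is in hand; for pointwise relative compactness observe that $\psi_i(\phi_i^{-1}(p_i))=\tilde\phi_i^{-1}(p_i)\to\tilde p$ while $\phi_i^{-1}(p_i)\to p$, so the images of a fixed compact neighborhood of $p$ stay in a bounded $d_{\tilde g}$-region; completeness of $\tilde g$ (Hopf--Rinow) makes closed bounded sets compact, giving relative compactness of the orbits. Extracting a subsequence yields a limit map $F:M\to\tilde M$, uniform on compacts, which by passing to the limit in the almost-isometry estimate satisfies $d_{\tilde g}(F(x),F(y))=d_g(x,y)$ and $F(p)=\tilde p$. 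Running the same argument on the inverse transition maps $\psi_i^{-1}=\phi_i^{-1}\circ\tilde\phi_i$ produces a distance-preserving $G:\tilde M\to M$; a diagonal argument shows that $F\circ G$ and $G\circ F$ are distance-preserving maps fixing the basepoints and coinciding with the respective identities, so $F$ is a bijective metric isometry.

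Finally, since $F$ is a distance-preserving bijection between the Riemannian manifolds $(M,d_g)$ and $(\tilde M,d_{\tilde g})$, the Myers--Steenrod theorem upgrades it to a smooth Riemannian isometry with $F^*\tilde g=g$, completing the proof. The main obstacle is the extraction of the limit map: one must ensure the transition maps are eventually defined on every compact subset of $M$, that their images do not escape to infinity (the base-point control together with completeness), and that the subsequential limit is genuinely a bijection rather than merely an isometric embedding. The deepest single input is Myers--Steenrod, which is precisely what allows $C^0$ control of the metrics --- with no control whatsoever on derivatives --- to nonetheless produce a \emph{smooth} isometry of the limits.
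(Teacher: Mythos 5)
Your proposal is correct in outline, but it takes a genuinely different, more self-contained route than the paper. The paper disposes of Theorem \ref{prop:uniqueness_riemann_convergence} by three citations: $C^0$ Cheeger--Gromov convergence implies pointed Gromov--Hausdorff convergence, pointed Gromov--Hausdorff limits are unique up to a distance-preserving bijection (Burago--Burago--Ivanov), and Myers--Steenrod upgrades that map to a smooth isometry. You instead reprove the middle step directly: transition maps $\psi_i=\tilde\phi_i^{-1}\circ\phi_i$, almost-isometry estimates from the $C^0$ bounds, Arzel\`a--Ascoli with basepoint control and Hopf--Rinow, a limiting distance-preserving bijection, and then the same Myers--Steenrod endgame. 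This buys self-containedness, and it is in fact the very template the paper later develops by hand in the semi-Riemannian setting (the quasi-isometry Lemmas \ref{lema1}--\ref{lema3} feeding Theorem \ref{teor:uniqueness_semi-riemannian}), where the metric-space shortcut is unavailable; your argument makes visible the mechanism the paper's citations hide. Two points need repair, though both are routine. First, your claim that the domains $\phi_i^{-1}(\tilde V_i)\cap U_i$ eventually contain any compact $K\subset M$ ``by the exhaustion property'' does not follow from exhaustion alone: the two exhaustions live in different manifolds, and their images in $M_i$ could a priori drift apart (this is precisely the mechanism behind Example \ref{ejemplo1}(b)). What rescues the claim is the basepoint convergence combined with the two-sided distance-distortion bounds: both $V_i$ and $\tilde V_i$ contain $p_i$, the image of a large $\tilde g$-ball about $\tilde p$ under the almost-isometric embedding $\tilde\phi_i$ contains a $g_i$-ball of comparable radius about $p_i$ (using completeness of $g_i$ and the fact that a path exiting the image must cross the image of the boundary sphere, which is metrically far), while $\phi_i(K)$ stays within bounded $g_i$-distance of $p_i$. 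You flag this as the main obstacle at the end, but the body of the proof should run this ball-containment argument rather than invoke exhaustion. Second, the distortion estimate $d_{\tilde g}(\psi_i(x),\psi_i(y))\to d_g(x,y)$ requires the same ``exiting a large ball costs too much length'' argument to rule out shortcuts in $M_i$ passing outside $\tilde V_i$, since $d_{g_i}$ is an infimum over all paths in $M_i$, not only those in the embedded image. With these two standard repairs your proof is complete and correct.
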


It is worth pointing out that
 $C^0$ convergence suffices because it  implies convergence in the pointed Gromov-Hausdorff sense (see \cite{Petersen}). Moreover, the limit of a sequence converging in this sense is unique up to a distance-preserving map (see \cite[Theorem 8.1.7]{burago:metric}). 
 By the Myers-Steenrod theorem, any such distance-preserving map between smooth Riemannian manifolds is necessarily a smooth isometry (see \cite[Thm 5.6.15]{Petersen}).

\subsubsection{Existence of limits: compactness Riemannian theorem}

We now collect some useful results that guarantee convergence for sequences of Riemannian manifolds. The standard framework extends naturally to Riemannian manifolds with boundary. However, extending compactness results -- i.e., those ensuring the existence of convergent subsequences -- to the manifold with boundary case is more delicate and will be addressed in future work, both in the Riemannian and Lorentzian settings. 

As a preliminary step toward establishing compactness for sequences of complete Riemannian metrics on a fixed manifold, we first recall a compactness theorem for sequences of complete pointed Riemannian manifolds. We first examine the potential obstructions to compactness, before recalling the key lemma and the compactness theorem itself.

\begin{remark}\label{remark:cotas_curvatura_radio_inyectividad}
	Intuitively, there are two main phenomena that may prevent a sequence of complete pointed Riemannian manifolds from having a subsequence that converges to a complete Riemannian manifold: either the dimension drops in the limit, because of the injectivity radius going to zero; or non-smooth features such as edges or corners develop when the curvature blows up (cf. \cite[\S 3.2.2]{bamler2007ricci} and \cite[\S 8.3]{HopperAndrews2011ricciflow}, the latter with an explicit construction).
\end{remark}

The following lemma allows one to locally bound the metric coefficients and their derivatives on normal coordinates when one has control over both the injectivity radius at a point and a bound on the curvature (see \cite[Proposition 4.32]{Chow2007ricciflow}, whose proof can be found in \cite[Corollary 4.11]{hamilton95a}).

\begin{lemma}\label{lemma:uniform_bounds_metrics_derivatives}
	Let $(M^n,h)$ be a Riemannian manifold {without boundary}. Let $p \in M$ and $r_0 \in (0, \frac{1}{4}\inj (p))$. Assume that for $l \geq 0$ there are constants $C_0,...,C_l < \infty$ such that
	\begin{equation*}
		| \nabla^a Rm | \leq C_a \quad \text{ in }\;\; B(x,r_0)\;\; \text{ for }\;\; a = 0,..., l.
	\end{equation*}
	Then, there are constants $\tilde{C}_0,..., \tilde{C}_l$ depending on $n$, $\inj (px)$, $C_0,..., C_l$, and a constant $c_1$ depending only on $n$ such that for any multi-index $\alpha$ with $|\alpha| \geq 1$,
	\begin{equation*}
		\begin{array}{cl}
		\frac{1}{2} (\delta_{ij}) \leq (h_{ij}) \leq 2 (\delta_{ij}) & \text{(as scalar products)} \quad \text{and} \\[4pt]
		\left|\frac{\partial ^\alpha h_{ij}}{\partial x^\alpha} \right| \leq \tilde{C}_{|\alpha|} &\text{ in } B(x, \min\{c_1 / \sqrt{C_0}, r_0\}),
		\end{array}
	\end{equation*}
	where $\{x^\mu\}$ are normal coordinates on $B(x, r_0)$.
\end{lemma}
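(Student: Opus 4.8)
The plan is to work in geodesic normal coordinates centered at $x$ and to proceed by induction on the order of differentiation, using comparison geometry for the base case and the radial structure equations to propagate the bounds to higher orders. The hypothesis $r_0 < \frac{1}{4}\inj(x)$ guarantees that $\exp_x$ is a diffeomorphism onto $B(x, r_0)$, so the normal coordinates $\{x^\mu\}$ are well defined there and every point of the ball is joined to $x$ by a unique minimizing radial geodesic; this is what lets the radial equations below be integrated from the center.

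First I would establish the $C^0$ estimate $\frac{1}{2}(\delta_{ij}) \leq (h_{ij}) \leq 2(\delta_{ij})$. Writing the metric in geodesic polar form and expressing its angular part through Jacobi fields along the radial geodesics, the Jacobi equation $J'' + R(J,\dot\gamma)\dot\gamma = 0$ together with the bound $|Rm| \leq C_0$ controls the growth and decay of $|J|$ via the Rauch comparison theorem. On a ball of radius $r \leq c_1/\sqrt{C_0}$, with $c_1$ a dimensional constant chosen small enough, the Jacobi fields differ from their Euclidean counterparts by at most a factor of $2$, which yields the two-sided bound on $(h_{ij})$ on $B(x, \min\{c_1/\sqrt{C_0}, r_0\})$.

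Next I would bound the coordinate derivatives $\partial^\alpha h_{ij}$. The key is the first-order radial system: the shape operator $S$ of the geodesic spheres obeys the radial Riccati equation $S' + S^2 + R(\cdot,\dot\gamma)\dot\gamma = 0$, and the spherical metric satisfies $\partial_r h_{ab} = 2 h_{ac} S^c_b$. Integrating the Riccati equation from the regular singular point at $x$ with the curvature bound controls $S$, hence $\partial_r h_{ab}$; differentiating the system in the angular directions and iterating produces equations for the higher derivatives whose forcing terms are polynomial in the lower-order metric derivatives and in the \emph{coordinate} derivatives $\partial^\beta Rm$ of the curvature.

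The main obstacle, and the step I would carry out most carefully, is converting the hypothesized covariant bounds $|\nabla^a Rm| \leq C_a$ into bounds on the coordinate derivatives $\partial^\beta Rm$ that actually appear as forcing terms. This is exactly where the induction closes: the Christoffel symbols are first coordinate derivatives of $h_{ij}$, so bounds on $\partial^{\leq m} h$ allow one to rewrite each $\nabla^a Rm$ in terms of the $\partial^\beta Rm$ through the iterated schematic identity $\partial Rm = \nabla Rm + \Gamma * Rm$, which in turn feeds the structure equations to bound the next-order derivatives of $h$. Running this bootstrap from the $C^0$ estimate up through order $l$ yields the constants $\tilde C_{|\alpha|}$, depending only on $n$, $\inj(x)$ and $C_0, \dots, C_l$, as claimed.
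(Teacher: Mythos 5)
Your outline is correct and coincides with the argument the paper relies on: the paper does not prove this lemma itself but cites it as \cite[Proposition 4.32]{Chow2007ricciflow} with proof in \cite[Corollary 4.11]{hamilton95a}, and that proof is exactly your scheme -- Rauch/Jacobi comparison under $|Rm|\leq C_0$ for the two-sided bound $\frac{1}{2}(\delta_{ij}) \leq (h_{ij}) \leq 2(\delta_{ij})$ on a ball of radius $\min\{c_1/\sqrt{C_0}, r_0\}$, followed by integrating the radial Jacobi/Riccati system from the (singular) center and bootstrapping, with the covariant bounds $|\nabla^a Rm|\leq C_a$ converted to coordinate derivative bounds via the schematic identity $\partial Rm = \nabla Rm + \Gamma * Rm$ and the inductively obtained control of $\Gamma$. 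The only caveat is bookkeeping: the derivative count should be made explicit (bounds on $\partial^\alpha h$ are obtained for $|\alpha|$ up to an order determined by $l$, not for arbitrary multi-indices, a sloppiness already present in the statement as quoted), but this does not affect the validity of your approach.
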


The following is the fundamental (sequential) compactness theorem for sequences of complete pointed Riemannian manifolds (see \cite{hamilton95a}; a full proof is given in \cite[Chapter 4]{Chow2007ricciflow}).

\begin{theorem}[]\label{thm:compactness_theorem_riemann}
	Let $\{(M_i, g_i, p_i)\}_{i\in\N}$ be a sequence of complete pointed $C^\infty$ Riemannian manifolds without boundary satisfying
	\begin{enumerate}[label=(\arabic*)]
		\item \emph{uniformly bounded geometry,}
		$$\left| \nabla_i^{a} Rm_i \right|_i \leq C_a \;\; \text{on } M_i,\quad\hbox{for all $i\in \mb N$ and for each $a \geq 0$,}$$
		 where $\{C_a\}_{a\in\mathbb{N}}$ is a sequence of real constants independent of $i$, and the
		\item \emph{injectivity radius estimate},
		$$\inj_i (p_i) \geq \iota_0,\quad\hbox{for all $i\in \mb N$ and for some constant $\iota_0 > 0$.}$$
	\end{enumerate}
	Then, there exists a subsequence of $\{(M_i,g_i,p_i)\}_{i \in \N}$ converging in $C^{\infty}$ to a complete pointed Riemannian manifold $(M,g,p)$ in the sense of Definition \ref{def:riemann_convergence}. 
\end{theorem}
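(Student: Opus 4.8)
The plan is to follow the classical Cheeger--Gromov--Hamilton strategy, using Lemma~\ref{lemma:uniform_bounds_metrics_derivatives} as the local engine and an Arzel\`a--Ascoli--plus--gluing argument to assemble the global limit. The first step is to upgrade the pointwise injectivity radius estimate (2), which is assumed only at the basepoints $p_i$, into a uniform lower bound on metric balls of any fixed radius: for each $R>0$ there should exist $\iota(R)>0$, independent of $i$, with $\inj_i(q)\geq \iota(R)$ for all $q\in B_{g_i}(p_i,R)$. This is where the full strength of the curvature bound (1) enters. By comparison geometry --- volume comparison together with $|Rm_i|\leq C_0$, via a Cheeger--type lemma or Klingenberg-type estimates --- the injectivity radius cannot collapse as one moves a bounded distance from $p_i$ while curvature stays bounded.

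With a uniform injectivity radius lower bound in hand, I would cover each ball $B_{g_i}(p_i,R)$ by geodesic normal coordinate charts of a fixed radius $r_0$ centered at the points of a maximal $\frac{r_0}{2}$-separated net, whose cardinality is uniformly bounded by volume comparison. On each such chart, Lemma~\ref{lemma:uniform_bounds_metrics_derivatives} yields, for every $a$, the two-sided comparison $\frac{1}{2}(\delta_{jk})\leq (g_i)_{jk}\leq 2(\delta_{jk})$ and uniform bounds on all coordinate derivatives $|\partial^\alpha (g_i)_{jk}|$. The transition maps between overlapping charts, being compositions of exponential maps and their inverses, inherit analogous uniform $C^k$ bounds.

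I would then run Arzel\`a--Ascoli: the uniform $C^{k+1}$ bounds make the sequences of metric coefficients and of transition functions precompact in $C^k$ on each chart, and a diagonal argument over the net, over the radii $R\to\infty$, and over $k\to\infty$ produces a single subsequence along which all local data converge in $C^\infty$. The limiting coefficients define a smooth Riemannian metric on a limiting chart, while the limiting transition maps are smooth diffeomorphisms that inherit the cocycle condition from the pre-limit charts.

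The step I expect to be the main obstacle is the gluing: one must glue the limiting charts along the limiting transition maps into a genuinely smooth limit manifold $M$ with a smooth metric $g$ and basepoint $p$ (the center of the first chart), and then construct the embeddings $\phi_i:U_i\to M_i$ required by Definition~\ref{def:riemann_convergence}. The delicate bookkeeping is to select the charts and the net consistently across $i$ so that the local limits are mutually compatible --- producing a true manifold rather than a disjoint union of patches --- and to verify that the assembled $\phi_i$ are honest embeddings with $\phi_i^* g_i\to g$ in $C^\infty$ uniformly on compact sets; this coherent-charts construction is the technical heart of Hamilton's argument. Finally, completeness of $(M,g)$ follows because the uniform lower injectivity radius bound passes to the limit and the exhaustion $\{U_i\}$ can be arranged so that each contains a ball $B_g(p,i)$; a Cauchy sequence in $M$ then eventually lies in a compact chart region and therefore converges.
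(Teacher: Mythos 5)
Your proposal is correct and is essentially the same argument the paper relies on: the paper does not prove Theorem~\ref{thm:compactness_theorem_riemann} itself, but quotes it from Hamilton \cite{hamilton95a} with the full proof in \cite[Chapter 4]{Chow2007ricciflow}, and that proof follows precisely your route --- upgrading the basepoint injectivity radius bound to a uniform bound on metric balls via volume comparison and a Cheeger-type lemma, applying Lemma~\ref{lemma:uniform_bounds_metrics_derivatives} in normal charts over a uniformly finite net, Arzel\`a--Ascoli with diagonalization over charts, radii and derivative orders, and Hamilton's coherent gluing of the limit charts to assemble $(M,g,p)$ together with the embeddings $\phi_i$ of Definition~\ref{def:riemann_convergence}. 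The only part you leave schematic is the chart-gluing bookkeeping, which you correctly identify as the technical heart and which is exactly where the cited references invest their effort.
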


In particular, any sequence $\{g_i\}_{i\in\mathbb{N}}$ of complete Riemannian metrics on a manifold without boundary $M^n$, satisfying (i) and (ii) of the previous theorem, the latter at a fixed point $p = p_i$ for all $i$; admits a subsequence converging to a complete Riemannian metric $g_\infty$.


\subsection{Convergence of semi-Riemannian manifolds}\label{subsec:convergence_semi-riemann}

We now turn to the question of convergence for spacetimes and, more generally, for semi-Riemannian manifolds of arbitrary signature.

\subsubsection{Discussion} \label{subsec:5_2_1}

In the Riemannian case, the positive-definite signature plays a central role in addressing three fundamental aspects, to be faced in the semi-Riemannian setting.

\smallskip 
\smallskip
\noindent {\em 1. Compactness of the  isometry group}. This fact  is crucial in proving the uniqueness of the limits for  the Riemannian case, i.e., the construction of an isometry between any two potential limits in Theorem \ref{prop:uniqueness_riemann_convergence}). 
M. Anderson already observed in \cite[\S 5]{Anderson2004CheegerGromov}, that the non-compactness of the isometry group poses a significant obstacle when attempting to apply Cheeger-Gromov theory in the Lorentzian setting. 
The following examples illustrate the difficulty of just transplantating the Riemannian notion considering pointed  uniform compact convergence {\em up to diffeomorphisms}, even under geodesic completeness or compactness.

\begin{example}\label{ejemplo1}{\em 
(a) Consider pointed Lorentz-Minkowski space in null coordinates $(\LL^2=(\R^2, 2dudv), 0)$. Define the isometry $\phi(u,v)=(2u,v/2)$. Then both the identity and the iterates $\{\phi^k\}_k$ define sequences converging  to $(\LL^2,0)$, but in different ways. However, there is no way to construct a pointed isometry $(\LL^2,0)\rightarrow (\LL^2,0)$ from the sequences, in sharp difference with Theorem \ref{prop:uniqueness_riemann_convergence} (compare with the case of rotations in $\R^2$).

(b) Now, modify $\LL^2$ in a small neighborhood $U$ of the point $(0,1)$ to obtain a non-flat metric $g$, and consider the 
sequence $\{(\R^2, (\phi^k)^*g,0)\}_{k\in\N}$. The pullback $(\phi^k)^*g$ translates the region $U$ towards infinity as $k\rightarrow\infty$. Thus, for any compact subset $K\subset \R^2$, the metric $(\phi^k)^*g$ is isometric to the flat metric $-2dudv$ on $K$ for sufficiently large $k$, that is, the pointed sequence $\{(\R^2, (\phi^k)^*g,0)\}_{k\in\N}$ converges uniformly on compact subsets to $(\LL^2, 0)$ (see Figure \ref{fig:example_uniqueness}). Nonetheless, each $(\phi^k)^*g$ is isometric to $g$ (being $\phi^{-k}$ an isometry which preserves $0$), implying that the sequence admits a second, distinct, non-isometric pointed limit (up to diffemorphisms).
}    
\end{example}

\begin{figure}[h]
	\centering \includegraphics[scale=0.75]{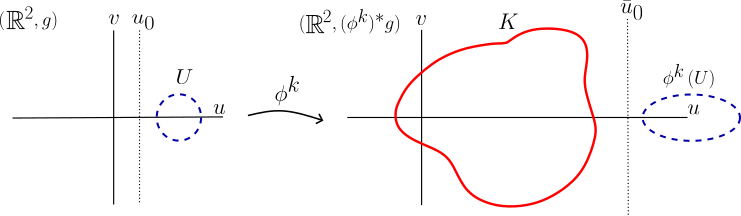}
	\caption{Example \ref{ejemplo2}(b). The neighborhood $U$ is contained in the region $\{u > u_0>0.\}$. For any compact subset $K \subset \R^2$, there exists $\bar{u}_0>0$
     such that $K \subset \{u < \bar{u}_0\}$ and $k \in \N$ such that  $\phi^{k} (U)$ is contained in the region $u > \bar{u}_0 $.}
	\label{fig:example_uniqueness}
\end{figure}

\begin{example}[\cite{Mounoud2003}]\label{ejemplo2}{\em
Non-uniqueness of limits may also arise on compact manifolds. Indeed, consider the moduli space of flat metrics with volume equal to  1 on a torus, up to isometries. The orbit of almost any flat torus under the action of the diffeomorphism group is dense in this moduli space. That is,  almost any unit volume flat metric $g_0$ on $\R^2 / \Z ^2$ satisfies that, for any other unit volume  flat metric $g_1$ on $\R^2 / \Z^2$, 
there exists a sequence $\{f_i\}_i$ of diffeomorphisms $f_i \in SL (2, \Z)$ such that the sequence of pull-backs $\{f_i ^* g_0\}_i$ Cheeger-Gromov converges to $g_1$, see \cite{Mounoud2003}. Notice that non-isometric flat Lorentzian tori may exhibit pathologies (from the Riemannian point of view), such as non-compact isometry groups. }
\end{example}
This issue will be addressed satisfactorily by using  an {\em anchor}, as in Definition \ref{def:anchor}, because the group of  linear isometries preserving the anchor is compact.

    
        \smallskip 
\smallskip
\noindent {\em 2 Norms and quasi-isometries}. 
 In the Riemannian case, the positive-definite structure provides a natural pointwise norm for tensors  for each manifold. 
 The fact that the metrics are  converging permits a comparison of norms with the limit and, then, to obtain quasi-isometries, which are relevant for uniqueness of the limits. Moreover,  these tools allow for a well-defined injectivity radius, uniform curvature norms and the application of the Arzelá-Ascoli's theorem which are the ingredients for the existence of the limits (see the estimates in Lemma \ref{lemma:uniform_bounds_metrics_derivatives} and the convergence result in Theorem \ref{thm:compactness_theorem_riemann}).

In the general semi-Riemannian case, the anchors will permit to construct canonically auxiliary Riemannian metrics around basepoints and, then, uniqueness up to isometries for the limits around the basepoints (Theorem \ref{teor:uniqueness_semi-riemannian}). 

Stronger results for uniqueness and existence of limits will be obtained for globally hyperbolic spacetimes in \S \ref{sec:convergence_globally_hyperbolic} by using the stronger anchoring of a $h$-steep temporal function. Indeed, by Theorem \ref{intro:thm_A} this will be a Riemannian issue for Wick-rotated metrics.  

\smallskip 
\smallskip
\noindent {\em 3. Hopf-Rinow equivalences}. Riemannian completeness implies the equivalence of several global properties which are frequently used implicitly.

In the semi-Riemannian setting, these properties are not equivalent and one or more of them should be imposed, eventually complemented with others. We will give a ge\-neral semi-Riemannian result of uniqueness of limits (Theorem \ref{coro_simplyconnected}) by imposing a condition weaker than geodesic  completeness. 
Other global hypotheses (for example, a controlled geodesic connectivity from the basepoint) may lead to different strengthenings of global  uniqueness results. Anyway, Hopf-Rinow equivalence will be available for the Wick-rotated metrics in the globally hyperbolic setting.


\subsubsection{General notion of convergence} \label{subsec:5_2_2}

\begin{definition}\label{def:marked_manifolds}
	An {\em anchored semi-Riemannian manifold $(M,g,\mathcal{B}_p)$} is a (connected) semi-Riemannian manifold $(M,g)$ of dimension $n+\nu$ and index $\nu$ with a distinguished basepoint $p \in M$, and a subspace $
    \mathcal{B}_p\subset T_p M$ of maximum dimension $\nu$ on which $g$ is negative definite. The {\em anchor} $\mathcal{B}_p$  determines an orthonormal decomposition $T_p M = \mathcal{B}_p \oplus \mathcal{B}_p^\perp$, where $g_p$ is negative definite on $\mathcal{B}_p$ and positive definite on $\mathcal{B}_p^\perp$. An orthonormal basis
$B_p$ of $T_pM$ is {\em adapted to the anchor} if its $\nu$ first vectors lie in $\mathcal{B}_p$ (and, thus, the others in $\mathcal{B}^\perp_p$); we will write then $B_p\in \B_p$ (abusing of the notation).
\end{definition}
In particular, an anchored semi-Riemannian manifold is a pointed one (in the Riemannian setting both notions are equivalent)
 and Definition \ref{def:riemann_convergence} naturally extends as follows.

\begin{definition}\label{def:convergence_semi-riemann} 
	A sequence of anchored semi-Riemannian manifolds $\{(M_i, g_i, \mathcal{B}_{p_i})\}$ is said to  {\em (Cheeger-Gromov) $C^k$ converge} to an anchored semi-Riemannian manifold $(M, g,  \mathcal{B}_{p})$, for $k=1,2 \dots \infty$, if
	\begin{enumerate}[label=(\roman*)]	
            \item\label{def:semiriem_convergence_2} the pointed manifolds $(M_i, g_i, p_i)$ converge in $C^k$ to $(M,g,p)$ in the sense of Definition \ref{def:riemann_convergence}, and
		\item\label{def:semiriem_convergence_3} $\{(d\phi_i ^{-1})_{p_i} (\mathcal{B}_{p_i})\}$ converges to $\mathcal{B}_p$ on $T_p M$.
	\end{enumerate}
\end{definition}

\begin{remark}
	In the Lorentzian case, condition (ii) 
    is equivalent to fix a unit timelike vector $v_i\in T_{p_i} M_i$ such that $\{(d\phi_i^{-1})_{p_i} (v_i)\}$ converges to a unit timelike vector $v\in T_p M$. 
    In the semi-Riemannian setting, 
	the condition amounts to requiring directly that a negative-definite subspace of $T_{p_i} M_i$ converges, via $(d\phi_i ^{-1})_{p_i}$, to a fixed negative-definite subspace of $T_p M$. However, the following result will show an essential equivalence with the convergence of adapted orthonormal bases.
	\end{remark}
    
    \begin{proposition}
        \label{prop:compacidad_adaptedframe} Assume that anchored convergence holds as in Def. \ref{def:convergence_semi-riemann} and let $B_i$ be an adapted orthonormal basis for each $\mathcal{B}_{p_i}$. Then, there exists and adapted orthonormal basis $B\in \mathcal{B}_{p}$ such that, up to a subsequence, $
    (d\phi_i ^{-1})_{p_i}(B_i)\longrightarrow B
    $,
    in the Grassmannian of frames for $M$. 
    \end{proposition}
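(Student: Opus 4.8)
The plan is to transport the problem into a fixed coordinate chart around $p$ and to recognize the adapted frames as orthonormal frames for a \emph{convergent} family of auxiliary positive-definite forms, for which compactness is restored. Set $q_i := \phi_i^{-1}(p_i)$, so $q_i \to p$ by Definition~\ref{def:riemann_convergence}, and write $\psi_i := (d\phi_i^{-1})_{p_i}\colon T_{p_i}M_i \to T_{q_i}M$. Fixing a chart at $p$ identifies each $T_{q_i}M$ and $T_pM$ with a common $\R^{n+\nu}$ through the coordinate frames, and convergence in the Grassmannian of frames of $M$ is then read componentwise in these coordinates. By naturality of the pullback, $\psi_i$ is a linear isometry from $(T_{p_i}M_i,g_i)$ onto $(T_{q_i}M,(\phi_i^*g_i)_{q_i})$; hence the transported frames $\hat B_i := \psi_i(B_i) = (u^i_1,\dots,u^i_{n+\nu})$ are orthonormal for the index-$\nu$ form $\hat g_i := (\phi_i^*g_i)_{q_i}$, their first $\nu$ vectors spanning $W_i := \psi_i(\mathcal{B}_{p_i})$. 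Since $\phi_i^*g_i \to g$ in $C^0$ on compact sets and $q_i \to p$, the chart components of $\hat g_i$ converge to those of $g_p$, i.e. $\hat g_i \to g_p$; and hypothesis~\ref{def:semiriem_convergence_3} of Definition~\ref{def:convergence_semi-riemann} gives $W_i \to \mathcal{B}_p$ in the Grassmannian of $\nu$-planes.

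The key step is the compactness furnished by the anchor. To each pair $(\hat g_i, W_i)$ attach the auxiliary Riemannian form $h_i := \hat g_i - 2\,\hat g_i(P_i\,\cdot\,,P_i\,\cdot\,)$, where $P_i$ is the $\hat g_i$-orthogonal projection onto $W_i$ (well defined since $\hat g_i|_{W_i}$ is negative definite). A one-line check gives $h_i = -\hat g_i$ on $W_i$, $h_i = \hat g_i$ on $W_i^{\perp_{\hat g_i}}$, and $W_i \perp_{h_i} W_i^{\perp_{\hat g_i}}$, so $h_i$ is positive definite and $\hat B_i$ is $h_i$-orthonormal; this is precisely the auxiliary Riemannian metric determined by the anchor. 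As $P_i$ depends continuously on $(\hat g_i,W_i)$ while $\hat g_i$ stays nondegenerate and $\hat g_i|_{W_i}$ negative definite, the convergences $\hat g_i \to g_p$ and $W_i \to \mathcal{B}_p$ yield $h_i \to h$, the auxiliary metric of $(g_p,\mathcal{B}_p)$. In particular the $h_i$ are eventually uniformly comparable to the fixed metric $h$, so the vectors of $\hat B_i$, being $h_i$-unit, have uniformly bounded norm and hence lie in a fixed compact subset of $\R^{n+\nu}$.

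I would then extract, by Bolzano--Weierstrass applied componentwise, a subsequence along which $\hat B_i \to B=(f_1,\dots,f_{n+\nu})$ in the frame bundle of $M$ (with base points $q_i\to p$, so $B$ sits at $p$). Passing to the limit in the orthonormality relations $\hat g_i(u^i_j,u^i_k)=\epsilon_j\delta_{jk}$, with $\epsilon_j=-1$ for $j\le\nu$ and $\epsilon_j=+1$ otherwise, and using $\hat g_i\to g_p$, shows that $B$ is $g_p$-orthonormal. Finally, $f_1,\dots,f_\nu$ are limits of vectors of $W_i$ and $W_i\to\mathcal{B}_p$, so each $f_a\in\mathcal{B}_p$; being linearly independent they span the $\nu$-plane $\mathcal{B}_p$, whence $B$ is adapted, i.e. $B\in\mathcal{B}_p$, which is the claim.

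The crux is the compactness in the second paragraph: for a fixed indefinite form the orthonormal frames constitute a non-compact set (the non-compactness of $O(\nu,n)$ emphasized in \S\ref{subsec:5_2_1}), so a bare sequence of $\hat g_i$-orthonormal frames need not subconverge. The role of the anchor is exactly to repackage these frames as orthonormal frames of the \emph{convergent positive-definite} family $\{h_i\}$, which restores compactness; the only points requiring care are the continuous dependence of the projection $P_i$ on $(\hat g_i,W_i)$ and the handling of the moving base points $q_i\to p$ through the fixed chart.
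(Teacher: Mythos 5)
Your proof is correct, and it rests on the same compactness mechanism as the paper's -- metric convergence plus convergence of the anchors neutralizes the non-compactness of $O(\nu,n)$ -- but it implements that mechanism by a genuinely different route. The paper writes the matrix $A_i$ of $(d\phi_i^{-1})_{p_i}$ in the bases $B_i$ and a fixed adapted orthonormal basis $B_0$ at $p$, uses the near-isometry relation $|A_i^T\eta A_i-\eta|<\epsilon$ together with the convergence of the anchors to show that $A_i$ eventually lies in a compact neighborhood of $O(\nu)\times O(n)$ (diagonal blocks $\epsilon$-close to the orthogonal groups, off-diagonal blocks $\epsilon$-small), extracts a convergent subsequence there, and gets orthonormality and adaptedness of the limit frame for free from $A\in O(\nu)\times O(n)$. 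You instead work directly with the transported frames: the sign-flipped forms $h_i=\hat g_i-2\,\hat g_i(P_i\,\cdot\,,P_i\,\cdot\,)$ are exactly the pointwise version of the anchored metric $g^A_p$ of Definition~\ref{def:anchor}, the frames are $h_i$-orthonormal, $h_i$ converges to the positive-definite limit form by continuity of the projection, so the frames are bounded; you then pass to the limit in the orthonormality relations and use Grassmannian convergence for adaptedness. The two routes encode the same fact -- your $h_i$-orthonormality is equivalent to saying the transition matrices stay near $O(\nu)\times O(n)=O(\nu,n)\cap O(n+\nu)$ -- but yours buys a few things: it makes explicit, already at the linear-algebra level, the auxiliary Riemannian structure that the paper only introduces afterwards in Definition~\ref{def:anchor}; it isolates the single analytic ingredient (continuous dependence of the $\hat g_i$-orthogonal projection on $(\hat g_i,W_i)$, which deserves a one-line justification, e.g., via Gram-matrix inversion on bases of $W_i$ converging to a basis of $\mathcal{B}_p$); and it visibly uses only $C^0$ convergence of the metrics at the basepoints, whereas the paper's proof invokes $C^1$ (which is in fact more than the pointwise estimate needs). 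Conversely, the paper's matrix computation delivers, with no appeal to projections, the explicit block-diagonal form of the limit transition matrix, which is essentially the same information packaged differently. Like the paper's argument (cf.\ Remark~\ref{rema:compacidad_adaptedframe}), your proof is purely local around $p$, so it inherits the same scope.
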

	\begin{proof} 
Choose coordinates around $p$ in $M$ yielding an orthonormal and adapted basis $B_0$ at $p$, and let $A_i$ be the matrix of $d\phi_i ^{-1}|_{p_i}$ expressed in the bases $B_i$ and $B_0$.
As the hypothesis of convergence \ref{def:semiriem_convergence_3} implies $\{\phi_i ^{-1}(p_i)\}\rightarrow p$, the problem reduces to checking that, up to a subsequence, $\{A_i\}$ converges to a matrix $A\in O(\nu )\times O(n)$.
Since $(\phi_i ^{-1})^{*} g$ converges to $g$ in $C^{1}$, this map is close to a linear isometry, that is, for each $\epsilon>0$ 
	 
\begin{equation*}
			|A_i^T \eta A_i - \eta| < \epsilon \quad\hbox{on $T_p M$}, \qquad \text{where} \; \eta = \text{diag}(-1,...,-1,1,...,1),
		\end{equation*}
for $i$ large. Then, using  that all $B_i$ are adapted and the convergence of the anchors, 

\begin{equation*}\label{eq:matrices_Ai} 
			A_i = \left(\begin{matrix}
				A_{11}^i & E_{12}^i \\
				E_{21}^i & A_{22}^i
			\end{matrix}\right)
		\end{equation*}
(for larger $i$), 
		where $A_{11}^i \in O_\epsilon (\nu), A_{22}^i \in O_\epsilon (n)$, ($O_\epsilon$  denotes  a $\epsilon$-small  compact neighborhood in the Euclidean space of the square matrices for the corresponding orthogonal group) and the submatrices $E_{12}^i$ and $E_{21}^i$ have entries in the compact interval $[-\epsilon, \epsilon]$. 
		Thus, converging subsequences of $\{A_i\}$ must exist, and the limit will be of the required form 
        $	A = \left(\begin{matrix}
				A_{11} & 0 \\
				0 & A_{22}
			\end{matrix}\right)\in
            O(
            \nu) \times O (n)
            $, thus providing the transition matrix from $B_0$ to $B$.
	\end{proof}

\begin{remark}\label{rema:compacidad_adaptedframe} Notice that the previous proof does not require full Cheeger-Gromov convergence on $M$ but just convergence in a compact neighborhood of $p$.     
\end{remark}
    
\subsubsection{Isometries between different limits}\label{subsec:5_2_3}
	The uniqueness of the limits is essential for the applications of Cheeger-Gromov convergence. The following general result shows how anchoring yields it for at least $C^2$-convergence locally, without global alternatives to Riemannian completeness as  as geodesic completeness, geodesic connectedness or compactness. 
    
\begin{theorem}\label{teor:uniqueness_semi-riemannian}
	Let $\{(M_i,g_i,\mathcal{B}_{p_i})\}$ be a sequence of anchored semi-Riemannian manifolds $C^2$-converging to two limits   $(M, g, \mathcal{B}_p)$, $(\tilde M, \tilde g, \mathcal{B}_{\tilde p})$
       (in the sense of Def. \ref{def:convergence_semi-riemann}). Then, there exists an isometry between neighborhoods of $p$ and $\tilde p$, $F: W_p\longrightarrow W'_{\tilde p}$. 
       
       If the $\tilde g$-geodesics starting at $\tilde p$ are complete, then $F$ can be extended to an isometry from any (starshaped) $g$-normal neighborhood $\C$  of $p$ which includes $W_p$.
       
\end{theorem}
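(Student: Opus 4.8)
\emph{The plan} is to realise the isometry as a limit of the comparison maps between the two convergences and then to propagate it along geodesics. Write the two convergences (in the sense of Definition~\ref{def:convergence_semi-riemann}) as embeddings $\phi_i\colon U_i\subset M\to M_i$ with $\phi_i^*g_i\to g$ in $C^2$ and $\phi_i^{-1}(p_i)\to p$, and $\tilde\phi_i\colon \tilde U_i\subset\tilde M\to M_i$ with $\tilde\phi_i^*g_i\to\tilde g$ and $\tilde\phi_i^{-1}(p_i)\to\tilde p$. I would set $F_i:=\tilde\phi_i^{-1}\circ\phi_i$, defined on the shrinking neighbourhoods of $p$ where the composition makes sense; since $\phi_i(p)\approx p_i\approx\tilde\phi_i(\tilde p)$ one has $F_i(p)\to\tilde p$. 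The point of anchoring is exactly to prevent the degeneracy of Example~\ref{ejemplo1}: choosing adapted orthonormal bases $B_i$ at $p_i$, Proposition~\ref{prop:compacidad_adaptedframe} gives, up to a subsequence, $(d\phi_i^{-1})_{p_i}(B_i)\to B$ and $(d\tilde\phi_i^{-1})_{p_i}(B_i)\to\tilde B$, adapted orthonormal bases for $g$ at $p$ and for $\tilde g$ at $\tilde p$; hence $d(F_i)_{p}$ carries the first frame to the second and stays bounded, providing the missing ``initial condition''.

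Next I would prove $F_i\to F$ (on a subsequence) in $C^1$ on a neighbourhood $W_p$. The key estimate is $F_i^*\tilde g\to g$ in $C^2$: writing $F_i^*\tilde g=\phi_i^*\big[(\tilde\phi_i^{-1})^*\tilde g\big]$ and using $(\tilde\phi_i^{-1})^*\tilde g-g_i=(\tilde\phi_i^{-1})^*(\tilde\phi_i^*g_i-\tilde g)\to0$ together with $\phi_i^*g_i\to g$. Here the auxiliary Riemannian metrics attached to the anchors (Definition~\ref{def:anchor}) and the estimates of $\S\ref{subsec:5_2_3}$ are what turn these pullback statements into uniform $C^1$ bounds on $F_i$ and equicontinuity of $dF_i$, with the boundedness of $d(F_i)_p$ from the previous step pinning the family down. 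Arzel\'a--Ascoli then yields a limit $F$ with $F(p)=\tilde p$ and $F^*\tilde g=g$; thus $F\colon W_p\to W'_{\tilde p}$ is a local isometry (and automatically smooth).

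For the extension, recall that a local isometry commutes with the exponential maps, so $F=\exp^{\tilde g}_{\tilde p}\circ dF_p\circ(\exp^g_p)^{-1}$ near $p$. Geodesic completeness of $\tilde g$ at $\tilde p$ makes $\hat F:=\exp^{\tilde g}_{\tilde p}\circ dF_p\circ(\exp^g_p)^{-1}$ well defined on the whole starshaped normal neighbourhood $\mathcal{C}$. To see it is a local isometry I would push each radial geodesic $\gamma_v(t)=\exp^g_p(tv)$ back through the sequence: for large $i$ the curve $\phi_i\circ\gamma_v$ is $C^2$-close to a $g_i$-geodesic, so $F_i\circ\gamma_v$ is close to the $\tilde g$-geodesic $\tilde\gamma(t)=\exp^{\tilde g}_{\tilde p}(t\,dF_p v)$, which by completeness exists for all $t\in[0,1]$ and keeps its image in a compact set eventually covered by $\tilde\phi_i(\tilde U_i)$. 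A continuity (open--closed) argument along $\gamma_v$ then shows that $F_i$ is eventually defined along the whole geodesic and converges there to $\hat F$; since $F_i^*\tilde g\to g$ throughout, $\hat F^*\tilde g=g$ on $\mathcal{C}$. Equivalently, the $C^2$ convergence makes $F_i$ nearly curvature-preserving along $\gamma_v$, so in the limit the curvature tensors of $g$ and $\tilde g$ match along $\gamma_v$ and $\tilde\gamma$ through $dF_p$ and parallel transport, which is exactly the Cartan--Ambrose--Hicks hypothesis guaranteeing that $\hat F$ is a local isometry.

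\emph{The main obstacle} I expect is this last propagation step. Controlling the domains of the $F_i$ along the entire geodesic, and upgrading the isometry of $dF_p$ at the single point to an isometry along $\gamma_v$, is where both the $C^2$ (curvature) hypothesis and the geodesic completeness of $\tilde g$ are genuinely needed; a naive transport via $\exp$ alone would not match the curvatures far from $p$ and could fail to be isometric.
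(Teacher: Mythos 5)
Your proposal is correct and, in substance, it is the paper's own argument: the comparison maps $F_i=\tilde\phi_i^{-1}\circ\phi_i$, the frame compactness of Proposition \ref{prop:compacidad_adaptedframe}, the anchored metrics of Definition \ref{def:anchor} used as quasi-isometry gauges, and the key convergence $F_i^*\tilde g\to g$ in $C^2$ are exactly Lemmas \ref{lema1}--\ref{lema3}, while your formula $\hat F=\exp^{\tilde g}_{\tilde p}\circ dF_p\circ(\exp^g_p)^{-1}$ and the Cartan--Ambrose--Hicks remark are precisely how the paper builds and extends $F$ (see footnote \ref{foot:Ambrose}). The one real divergence is the middle step: you produce $F$ as a subsequential $C^1$-limit of the $F_i$ via Arzel\'a--Ascoli, whereas the paper never makes the $F_i$ converge --- it defines $F:=\exp_{\tilde p}\circ L\circ\exp_p^{-1}$ outright, with $L(B_p)=B_{\tilde p}$ supplied by Lemma \ref{lema3}, and passes limits only through the geodesic and parallel-transport ODEs of the pulled-back metrics. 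Be aware that your route needs equicontinuity of $dF_i$, which you assert but which does not follow from the quasi-isometry estimates alone (those give only uniform $C^0$ bounds on $dF_i$); it can be repaired by observing that each $F_i$ is an isometry from $(\B^A(r_0),F_i^*\tilde g)$ onto its image in $(\tilde M,\tilde g)$, so its second derivatives are algebraic expressions in $dF_i$ and the Christoffel symbols of $F_i^*\tilde g$ and $\tilde g$, both uniformly controlled by the $C^2$ convergence --- at which point you have essentially rederived the paper's explicit exponential-map construction. For the extension, your open--closed propagation along each radial geodesic (completeness keeping $\tilde\gamma$ in a compact set eventually covered by $\tilde\phi_i(\tilde U_i)$) is a workable variant; the paper instead argues by contradiction at a maximal radius $r_0<r_u$, re-anchoring at $p_0=\exp_p(r_0u)$ with parallel-transported frames and reapplying the local statement as a black box (this uses Remark \ref{rema:compacidad_adaptedframe}, that only convergence near the new basepoint is needed), which avoids re-running the domain-control estimates along the whole geodesic.
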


As a preparation, we sketch the idea of the proof and give some definitions and lemmas illustrating the difficulties specific to the semi-Riemannian case.

Let $\{\phi_i\}, \{\tilde \phi_i\}$ be  the diffeomorphisms associated to the convergence with domains in $M$ and $\tilde M$, respectively. 
    Let $U^i:=\phi_i^{-1}(\phi_i(U_i)\cap \tilde \phi_i(\tilde U_i))$, which satisfies $U^i\subset U^{i+1}$ (as both $\{U_i\}, \{\tilde U_i\} $ are exhaustions). Notice that  each $\phi^{-1}_i(p_i)\in U^i$ but, a priori, $p$ might never belong.  
    
    In the Riemanian case, pointed convergence would be enough to ensure that  
    $\{(\tilde \phi_i^{-1}\circ\phi_i)^* \tilde g\}$ 
    converges uniformly on compact subsets   as well as $p\in U^i$ for large $i$ and, then, $\{\tilde \phi_i^{-1}\circ\phi_i(p)\}\rightarrow\tilde p$. 
    This relies on the fact that the maps $\tilde \phi_i^{-1}\circ\phi_i$ are, for some $\lambda > 1$, $\lambda$ quasi-isometries for the distances associated to $g$ and $\tilde g$ \footnote{See for example \cite[Prop 4.2 (ii)]{Chow2007ricciflow}.}, that is, they satisfy as quadratic forms
\begin{equation*}
\frac{1}{\lambda}g \leq     (\tilde \phi_i^{-1}\circ\phi_i)^* {\tilde g} \leq \lambda g,
\end{equation*}
 and the inequalities are preserved by the associated distances. 

For the general semi-Riemannian case, the anchors will be used to find local Riemannian metrics around $p$ and $\tilde p$ for which $\{(\tilde \phi_i^{-1}\circ\phi_i)\}$ are quasi-isometries. 
These metrics depend on the choice of a representative $B$ adapted to each anchor $\B$, and the following convention will be used. 

\begin{convention}\label{conv} In what follows, we will choose any adapted representative $B_{p_i} \in \B_{p_i}$ and will assume  the convergences (up to a subsequence)  
$$(d\phi_i ^{-1})_{p_i}(B_{p_i})\longrightarrow B_p, \qquad (d\tilde \phi_i ^{-1})_{p_i}(B_{p_i})\longrightarrow \tilde B_{\tilde p},$$ 
for suitable representatives  $B_p\in \B_p$ and $\tilde B_{\tilde  p}\in \B_{\tilde p}$ and without loss of generality by Prop. \ref{prop:compacidad_adaptedframe}.
\end{convention}
These anchored metrics will allow us to obtain uniform convergence of $\{(\tilde \phi_i^{-1}\circ\phi_i)^* \tilde g\}$ on small 
compact neighborhoods of $p, \tilde p$ (Lemmas \ref{lema1}, \ref{lema2} and \ref{lema3}) and, then, to find the required isometries. 

\begin{definition}\label{def:anchor}
Let     $(M, g, \mathcal{B}_p)$ be an anchored semi-Riemannian manifold and let $g^A_p$ be the Euclidean  scalar product at $T_pM$ obtained reversing the sign of $\B$, i.e, $g^A_p=-g_p|_{\B} \oplus g_p|_{\B^\perp}$ and fix an orthonormal basis $B \in \B$ (which will be implicitly chosen using Convention \ref{conv}).

The {\em ($B$-)anchored metric} $g^A$  of radius $r_0>0$ 
is the Riemannian metric constructed in a compact 
normal neighborhood $\B^A(r_0)$ of $p$  as follows.
For each $g^A_p$-unit vector $u$ consider the $g$-geodesic $\gamma_u$ with initial velocity $\dot \gamma(0)$ and take $g^A_{\gamma_u(r)}, 0<r\leq r_0$ by $g$-parallely propagating the orthonormal basis $B$ of $g^A_p$ along $
\gamma_u$ (here, $r_0$ is taken small enough so that  $\B^A(r_0)$ is normal).  

The natural $g^A$ distance will be denoted $d^A$ and the norms (on tensors or tensor fields on compact sets) $|\cdot |_{g_A}$.   
\end{definition}

\begin{lemma}\label{lema1}
    Assume that the anchored  metric $g^A$ of the limit  $(M, g, \mathcal{B}_p)$ is well  defined in $\B^A(4r_0)$.
    Then, for each $\lambda >1$ there exists $i_\lambda$ such that for $i>i_\lambda$: 
    
    (a) the anchored metric $g^A_i$ of each element $(M_i,g_i,\mathcal{B}_{p_i})$ in the  
    converging sequence is well-defined  on $\B_i ^A(2r_0)$ and 
    
    (b) the restrictions of the maps $\phi_i$ 
    to $B^A(r_0)$ are well defined and become $\lambda$ quasi-isometries between $g^A$ and each $g^A_i$.
\end{lemma}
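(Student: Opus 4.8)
The plan is to transport everything to the limit manifold $M$ via the diffeomorphisms $\phi_i$ and to exploit the fact that the anchored metric of Definition~\ref{def:anchor} is built entirely from isometry-invariant data: the basepoint, an adapted orthonormal frame, the $g$-geodesic flow and $g$-parallel transport. Since $\phi_i:(U_i,\phi_i^*g_i)\to(V_i,g_i)$ is an isometry, this naturality yields $\phi_i^*(g_i^A)=\hat g_i^A$, where $\hat g_i^A$ denotes the anchored metric constructed \emph{on $M$} from the basepoint $q_i:=\phi_i^{-1}(p_i)$, the frame $\hat B_i:=(d\phi_i^{-1})_{p_i}(B_i)$ and the pulled-back metric $\hat g_i:=\phi_i^*g_i$. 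By Convention~\ref{conv} (justified by Proposition~\ref{prop:compacidad_adaptedframe}) together with Definitions~\ref{def:convergence_semi-riemann} and \ref{def:riemann_convergence}, we have $q_i\to p$, $\hat B_i\to B_p$, and $\hat g_i\to g$ in $C^2$ uniformly on compact subsets. Thus both assertions reduce to showing that the construction $(q,B,g)\mapsto g^A$ depends continuously on its data, compatibly with the domains.

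The key analytic input is the continuous dependence of solutions of ODEs on initial conditions and parameters. The $g$-geodesics $r\mapsto\gamma_u(r)=\exp_p^g(ru)$ and the $g$-parallel frames along them solve ODEs whose coefficients are the Christoffel symbols of the metric. Since $\hat g_i\to g$ in $C^2$, these coefficients converge in $C^1$; moreover, the differential $d\exp$ is governed by the Jacobi equation, whose coefficients are curvature terms that likewise converge (in $C^0$). Consequently $\exp_{q_i}^{\hat g_i}\to\exp_p^g$ in $C^1_{\mathrm{loc}}$, and the parallel-transported frames $\{e_\alpha^i\}$ converge uniformly to $\{e_\alpha\}$ on the relevant compact balls. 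It is precisely this step that forces the $C^2$-hypothesis of Theorem~\ref{teor:uniqueness_semi-riemannian}.

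For part~(a), recall that $g^A$ being defined on $\B^A(4r_0)$ means that $\exp_p^g$ is a diffeomorphism on the closed $g_p^A$-ball of radius $4r_0$. Since $\exp_{q_i}^{\hat g_i}\to\exp_p^g$ in $C^1$, while $q_i\to p$ and the initial reflected inner products at $q_i$ converge to $g_p^A$, the standard stability of diffeomorphisms under $C^1$-small perturbations (the limit map being a diffeomorphism on a strictly larger ball) gives that, for $i$ large, $\exp_{q_i}^{\hat g_i}$ is a diffeomorphism on the ball of radius $2r_0$; pushing forward by $\phi_i$ shows that $g_i^A$ is well-defined on $\B_i^A(2r_0)$. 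The same convergence guarantees $\B^A(r_0)\subset\phi_i^{-1}(\B_i^A(2r_0))$ for $i$ large, so that the comparison below takes place on a common domain.

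For part~(b), note first that $\B^A(r_0)\subset U_i$ for $i$ large, because $\{U_i\}$ is an exhaustion of $M$ with $p\in U_i$; hence $\phi_i|_{\B^A(r_0)}$ is well-defined. On the compact set $\overline{\B^A(r_0)}$ the metric $\hat g_i^A$ is the inner product declaring $\{e_\alpha^i\}$ orthonormal, while $g^A$ does so for $\{e_\alpha\}$; since these frames converge uniformly, $\hat g_i^A\to g^A$ uniformly in $C^0$. Therefore, given $\lambda>1$, there is $i_\lambda$ such that for $i>i_\lambda$
\begin{equation*}
\tfrac1\lambda\, g^A \;\le\; \phi_i^*(g_i^A)=\hat g_i^A \;\le\; \lambda\, g^A \qquad\text{on } \B^A(r_0),
\end{equation*}
as quadratic forms, i.e.\ $\phi_i|_{\B^A(r_0)}$ is a $\lambda$-quasi-isometry between $g^A$ and $g_i^A$. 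The main obstacle is the domain-stability in part~(a): one must ensure the exponential maps remain diffeomorphisms on a neighborhood large enough to contain $\B^A(r_0)$, which is exactly what the $C^2$-convergence, through the Jacobi equation, provides.
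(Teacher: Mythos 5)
Your proposal is correct and follows essentially the same route as the paper's proof: $C^2$-convergence of the pulled-back metrics gives $C^1$-convergence of the Levi-Civita connections (Christoffel symbols), and then continuous dependence of geodesics and parallel transport on coefficients and initial data, together with the convergence of the anchors, yields uniform convergence of $\phi_i^*g_i^A$ to $g^A$, from which both assertions follow. The paper compresses the remaining steps into ``all the assertions are straightforward,'' whereas you usefully make explicit the naturality $\phi_i^*(g_i^A)=\hat g_i^A$ under isometries, the Jacobi-equation argument for $C^1$-convergence of the exponential maps, and the stability of diffeomorphisms needed for the domain claim in part (a) -- details the paper leaves implicit.
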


\begin{proof} The $C^2$ uniform convergence of 
$\{\phi_i^*g_i\}$ to $g$ on the compact set  $\B^A(2r_0)$
implies the $C^1$-convergence $\{\phi_i^*\nabla^i\} \longrightarrow \nabla$ of the corresponding Levi-Civita connections. By 
    hypothesis, $\{(d\phi_i ^{-1})_{p_i} (\mathcal{B}_{p_i})\} \longrightarrow \mathcal{B}_p$, thus  
    $g^A_{p_i}\longrightarrow g^A$, and the $C^1$-dependence of the parallel transport with the initial condition implies the uniform convergence of $\{\phi_i^*g^A_i\}$ to $g^A$. Then, all the assertions are straightforward.
\end{proof}

\begin{lemma} \label{lema2} Given the two limits, $(M, g, \B_p)$ and $(\tilde M, \tilde g, \B_{\tilde p})$, take $r_0>0$  such that  the anchored metrics $g^A$, $\tilde g^A$   with radius $r_0$ and $4r_0$ resp. are well defined. Then:

(a) Up to a finite number,  the maps 
     $\tilde \phi_i^{-1}\circ\phi_i: \B^A(r_0)\longrightarrow \tilde \B^A(4r_0))$ are well-defined.      
     
     (b) For each $\lambda >1$ there exists $i_\lambda \in \N$ such that $\tilde \phi_i^{-1}\circ\phi_i$ becomes a $\lambda$ quasi-isometry for $g^A$ and 
     $\tilde g^A$ when $i>i_\lambda$.

(c) $\lim_{i\rightarrow\infty}(\tilde \phi_i^{-1}\circ \phi_i)(p)=\tilde p$.
\end{lemma}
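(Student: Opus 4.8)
The plan is to prove the three parts of Lemma \ref{lema2} in the order (a), (b), (c), leveraging the quasi-isometry estimates already packaged into Lemma \ref{lema1} applied \emph{twice}: once to the sequence $\{\phi_i\}$ converging to $(M,g,\B_p)$ on $\B^A(r_0)$, and once to $\{\tilde\phi_i\}$ converging to $(\tilde M,\tilde g,\B_{\tilde p})$ on $\tilde\B^A(4r_0)$. The key observation is that $\tilde\phi_i^{-1}\circ\phi_i$ is a composition of two maps whose quasi-isometric behaviour we already control, so the difficulty is not in the analytic estimates but in the bookkeeping of domains.

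For part (a), first I would note that $\phi_i(\B^A(r_0))$ is a compact subset of $M_i$, and I must ensure it lies in $\tilde\phi_i(\tilde U_i)$ so that $\tilde\phi_i^{-1}$ can be applied; this is where I use that $\phi_i(\B^A(r_0))$ is eventually contained in $\tilde\phi_i(\tilde\B^A(4r_0))$. Concretely, by Lemma \ref{lema1}(b) the maps $\phi_i$ are $\lambda$ quasi-isometries onto their images for $i$ large, and the images $\phi_i(\B^A(r_0))$ converge (in the uniform sense provided by the convergence of the metrics) so that they are eventually trapped inside $\tilde\phi_i(\tilde\B^A(4r_0))$; the slack between radius $r_0$ on the source side and $4r_0$ on the target side is exactly what absorbs the distortion so that the composition is defined. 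Thus, discarding finitely many indices, $\tilde\phi_i^{-1}\circ\phi_i:\B^A(r_0)\to\tilde\B^A(4r_0)$ makes sense.

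For part (b), the estimate follows by composing quasi-isometry bounds. Applying Lemma \ref{lema1}(b) to $\{\phi_i\}$ gives, for any $\mu>1$ and $i$ large, $\tfrac{1}{\mu}g^A\leq\phi_i^*g^A_i\leq\mu g^A$ on $\B^A(r_0)$; applying it to $\{\tilde\phi_i\}$ gives the analogous bound relating $\tilde\phi_i^*\tilde g^A_i$ and $\tilde g^A$ on $\tilde\B^A(4r_0)$. Since the \emph{same} anchored sequence metrics $g^A_i=\tilde g^A_i$ live on $M_i$ (both anchored metrics descend from the single anchor $\B_{p_i}$ via Convention \ref{conv}), chaining these two comparisons yields $(\tilde\phi_i^{-1}\circ\phi_i)^*\tilde g^A$ within a factor $\mu^2$ of $g^A$; choosing $\mu=\sqrt{\lambda}$ delivers the desired $\lambda$ quasi-isometry. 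The main point to verify carefully is precisely that the auxiliary metric on $M_i$ used in both applications is the common anchored metric $g^A_i$, which is guaranteed because Convention \ref{conv} fixes a single adapted basis $B_i$ at each $p_i$.

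For part (c), I would argue that $(\tilde\phi_i^{-1}\circ\phi_i)(p)\to\tilde p$ using the convergence of basepoints together with the quasi-isometry control just established. We know $\phi_i^{-1}(p_i)\to p$ and $\tilde\phi_i^{-1}(p_i)\to\tilde p$ from Definition \ref{def:convergence_semi-riemann}\ref{def:semiriem_convergence_2}; writing $(\tilde\phi_i^{-1}\circ\phi_i)(p)$ and comparing $d^A$-distances to $\tilde\phi_i^{-1}(p_i)$, the $\lambda$ quasi-isometry bound of part (b) shows that the $\tilde g^A$-distance from $(\tilde\phi_i^{-1}\circ\phi_i)(p)$ to $\tilde\phi_i^{-1}(p_i)$ is controlled by the $g^A$-distance from $p$ to $\phi_i^{-1}(p_i)$, which tends to $0$; since $\tilde\phi_i^{-1}(p_i)\to\tilde p$, the claim follows. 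The main obstacle I anticipate is entirely in part (a): ensuring the domains line up so that the composition is globally defined on all of $\B^A(r_0)$ for $i$ large, rather than only on a shrinking subset, and this is resolved by the radius mismatch $r_0$ versus $4r_0$ built into the hypotheses, which provides a uniform margin absorbing both the quasi-isometric distortion and the displacement of basepoints.
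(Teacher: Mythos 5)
Your proposal is correct and follows essentially the same route as the paper's own proof: part (a) by applying Lemma \ref{lema1} twice and letting the gap between radii $r_0$ and $4r_0$ absorb the quasi-isometric distortion, part (b) by composing two quasi-isometries (the paper notes a $\lambda$--$\lambda^2$ composition where you take $\mu=\sqrt{\lambda}$, the same idea), and part (c) via exactly the estimate $\tilde d^A\bigl((\tilde\phi_i^{-1}\circ\phi_i)(p),\tilde\phi_i^{-1}(p_i)\bigr)\leq \lambda\, d^A\bigl(p,\phi_i^{-1}(p_i)\bigr)\to 0$ combined with $\tilde\phi_i^{-1}(p_i)\to\tilde p$. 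Your explicit remark that Convention \ref{conv} fixes a single adapted basis $B_i$, so that both applications of Lemma \ref{lema1} compare against the \emph{same} anchored metric $g^A_i$ on $M_i$, is a point the paper leaves implicit, and it is the right thing to verify.
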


\begin{proof} (a) Apply  Lemma \ref{lema1} twice choosing  
$\lambda=2$ (so that $\phi_i(\B^A(r_0))\subset \tilde \phi_i(\tilde \B^A(4r_0))$). 

(b) The composition of  two $\lambda$ quasi-isometries is a  $\lambda^2$ quasi-isometry. 

(c)  $\{\phi^{-1}(p_i)\}$ eventually remains in 
    $\B^A(r_0)$ and, therein, $\tilde \phi_i^{-1}\circ \phi_i$ can be regarded as a $\lambda$ 
    quasi-isometry. Thus, the result follows as $\tilde \phi_i^{-1}(p_i)\rightarrow \tilde p$ and
    $\tilde d^A(\tilde \phi_i^{-1}(p_i), (\tilde \phi_i^{-1}\circ \phi_i)(p)) \leq
    \lambda^2 d^A(\phi_i^{-1}(p_i), p) \longrightarrow 0.
     $
\end{proof}

The following step  emphasizes an essential semi-Riemannian property yielded by the anchoring (which is straightforward in the Riemannian case).
\begin{lemma}[Anchoring initiates uniform convergence] \label{lema3} 
    For $\B^A(r_0)$ as in Lemma \ref{lema2}, and any choice of basis  adapted to the anchoring $B_p \in \B_p$:
    \begin{equation*}
        \begin{array}{cc}
         (\tilde \phi_i^{-1}\circ\phi_i)^*(\tilde g)\longrightarrow g     &  C^2-\hbox{uniformly on}
    \; \B^A(r_0)   \\
    d(\tilde \phi_i ^{-1}\circ \phi_i)_{p}(B_p)\longrightarrow  B_{\tilde p}, 
             & \hbox{up to a subsequence},
        \end{array}
    \end{equation*}
    for some $B_{\tilde p}\in  \B_{\tilde p}$.
\end{lemma}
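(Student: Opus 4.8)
The plan is to reduce the statement to pulling a $C^2$-small ``error'' tensor through the transition maps, and then to control those maps using \emph{both} ingredients produced by the anchoring. First I would set $\psi_i:=\tilde\phi_i^{-1}\circ\phi_i$, which by Lemma~\ref{lema2}(a) is well defined on $\B^A(r_0)$ with image in $\tilde\B^A(4r_0)$ for $i$ large. Since $\tilde\phi_i\circ\psi_i=\phi_i$, one has the exact algebraic identity $\psi_i^*\tilde g=(\tilde\phi_i\circ\psi_i)^*g_i+\psi_i^*E_i=\phi_i^*g_i+\psi_i^*E_i$, where $E_i:=\tilde g-\tilde\phi_i^*g_i$. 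By the two convergence hypotheses $\phi_i^*g_i\to g$ in $C^2$ on $\B^A(r_0)$, while $E_i\to 0$ in $C^2$ on the compact set $\tilde\B^A(4r_0)$ (together with its $\tilde\nabla$-covariant derivatives up to order two). Thus the whole first claim reduces to proving $\psi_i^*E_i\to 0$ in $C^2$ on $\B^A(r_0)$.

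The heart is then a uniform control of $\psi_i$ up to order three. The indefinite metric $g$ furnishes no pointwise bound for $d\psi_i$, which is exactly where the anchor intervenes: by Lemma~\ref{lema2}(b) each $\psi_i$ is a $\lambda$-quasi-isometry between the \emph{Riemannian} anchored metrics $g^A$ and $\tilde g^A$, so $\tfrac{1}{\sqrt\lambda}|v|_{g^A}\le|d\psi_i(v)|_{\tilde g^A}\le\sqrt\lambda\,|v|_{g^A}$, a uniform two-sided $C^0$-bound on $d\psi_i$ (and $\psi_i(p)\to\tilde p$ by Lemma~\ref{lema2}(c)). To bootstrap to higher order I would exploit a structural feature of transition maps that factor through the common sequence manifold $M_i$: $\psi_i$ is \emph{exactly affine} for the pulled-back connections, $\psi_i^*(\tilde\phi_i^*\nabla^i)=(\tilde\phi_i\circ\psi_i)^*\nabla^i=\phi_i^*\nabla^i$. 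As in Lemma~\ref{lema1}, $C^2$ metric convergence gives $\phi_i^*\nabla^i\to\nabla$ and $\tilde\phi_i^*\nabla^i\to\tilde\nabla$ in $C^1$; writing $A_i:=\nabla-\phi_i^*\nabla^i$ and $\tilde A_i:=\tilde\nabla-\tilde\phi_i^*\nabla^i$, the Hessian of $\psi_i$ with respect to the \emph{limit} connections equals $\nabla d\psi_i(\cdot,\cdot)=\tilde A_i(d\psi_i\,\cdot,d\psi_i\,\cdot)-d\psi_i\,A_i(\cdot,\cdot)$. Since $A_i,\tilde A_i\to 0$ in $C^1$ and $d\psi_i$ is bounded, a short bootstrap shows that $d\psi_i$, $\nabla d\psi_i$ and $\nabla^2 d\psi_i$ are all uniformly bounded on $\B^A(r_0)$ (equivalently $\psi_i$ is bounded in $C^3$), with $\nabla d\psi_i,\nabla^2 d\psi_i\to 0$.

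With this in hand, expanding $\nabla^r(\psi_i^*E_i)$ for $r=0,1,2$ by the covariant chain rule yields a finite sum of terms, each a pullback $\psi_i^*(\tilde\nabla^sE_i)$ with $0\le s\le 2$ contracted against a product of the factors $d\psi_i$, $\nabla d\psi_i$, $\nabla^2 d\psi_i$. Every factor $\psi_i^*(\tilde\nabla^sE_i)$ tends to $0$ uniformly on $\B^A(r_0)$ (because $\tilde\nabla^sE_i\to 0$ uniformly on $\tilde\B^A(4r_0)\supset\psi_i(\B^A(r_0))$), while every map-derivative factor is uniformly bounded by the previous step; hence $\psi_i^*E_i\to 0$ in $C^2$, and combined with $\phi_i^*g_i\to g$ this proves $\psi_i^*\tilde g\to g$ in $C^2$ on $\B^A(r_0)$. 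I expect the only genuine difficulty to be precisely the top-order term $E_i\cdot\nabla^2 d\psi_i$: it would be uncontrollable from the naive $C^2$-bound on $\psi_i$ alone, and it is the exact affineness for the pulled-back connections (a feature special to maps factoring through a common $M_i$), together with the anchored quasi-isometry, that upgrades the bound to $C^3$ and makes merely $C^2$ metric convergence suffice.

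Finally, for the frame convergence I would argue by compactness. The uniform bound on $\nabla d\psi_i$ makes $\{d\psi_i\}$ equicontinuous, while the quasi-isometry keeps $\{d\psi_i|_p(B_p)\}$ in a bounded, nondegenerate set of frames over the compact $\tilde\B^A(4r_0)$; since $\psi_i(p)\to\tilde p$, a subsequence converges to a frame at $\tilde p$. To identify the limit, set $q_i:=\phi_i^{-1}(p_i)\to p$ and note $d\psi_i|_{q_i}\bigl(d\phi_i^{-1}|_{p_i}(B_i)\bigr)=d\tilde\phi_i^{-1}|_{p_i}(B_i)\to\tilde B_{\tilde p}\in\B_{\tilde p}$ by Convention~\ref{conv}. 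Equicontinuity of $d\psi_i$, together with $q_i\to p$ and $d\phi_i^{-1}|_{p_i}(B_i)\to B_p$, lets me replace the pair $(q_i,d\phi_i^{-1}|_{p_i}(B_i))$ by $(p,B_p)$ in the limit, giving $d\psi_i|_p(B_p)\to\tilde B_{\tilde p}=:B_{\tilde p}\in\B_{\tilde p}$, as required.
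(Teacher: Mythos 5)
Your proposal is correct and follows essentially the same route as the paper: the identical decomposition $\psi_i^*\tilde g=\phi_i^*g_i+\psi_i^*\bigl(\tilde g-\tilde \phi_i^{*}g_i\bigr)$ estimated through the $\lambda$-quasi-isometries of Lemma \ref{lema2}, with the frame statement obtained from Lemma \ref{lema2}(c) together with the compactness argument of Proposition \ref{prop:compacidad_adaptedframe} and Remark \ref{rema:compacidad_adaptedframe}. Your affine-connection bootstrap (using $\psi_i^*(\tilde\phi_i^*\nabla^i)=\phi_i^*\nabla^i$ to bound $\psi_i$ in $C^3$) is a sound elaboration of the one step the paper leaves implicit, since its displayed inequality literally controls only pointwise tensor norms and transferring the $C^2$ statement through the pullback does require the derivative control you supply, in the same spirit as the connection-convergence argument of Lemma \ref{lema1}.
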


\begin{proof}  Using that all $\tilde \phi_i^{-1}\circ\phi_i$ (and their inverses) are quasi-isometries for the same $\lambda$, 
$$  \begin{array}{rl}
     |(\tilde \phi_i^{-1}\circ\phi_i)^*(\tilde g)-g|_{g^A} &\leq \left|(\tilde \phi_i^{-1}\circ\phi_i)^*(\tilde g-\tilde \phi_i^{*}g_i)\right|_{g^A} 
     +\left|\phi_i^*g_i-g\right|_{g^A} 
     \\
     & \leq  \lambda^2  \left|\tilde g-\tilde \phi_i^{*}g_i\right|_{\tilde g^{A}}
     +\left|\phi_i^*g_i-g\right|_{g^A},   
\end{array}    $$
which $C^2$-converges to $0$ (because, by hypothesis $\{\phi_i^*g_i\}\rightarrow g$ and $\tilde \phi_i^{*}g_i\rightarrow \tilde g$, $C^2$-uniformly on the compact sets  $\B^A(r_0)$, $\tilde \B^A(4r_0)$, resp.), 
yielding the first convergence. The second one follows from Lemma \ref{lema2}(c) and Prop. \ref{prop:compacidad_adaptedframe} with Remark \ref{rema:compacidad_adaptedframe}. 
\end{proof}

This result on convergence allows us to obtain the required isometries between the original semi-Riemannian metrics $g, \tilde g$ close to $p, \tilde p$.
 %

    

\begin{proof}[Proof of Theorem \ref{teor:uniqueness_semi-riemannian}] For the first assertion,  construct an isometry $$F: (\B^A(r_0), g)\longrightarrow (\tilde \B^A(r_0), \tilde g)$$ 
as follows.  
Choose $B_p\in \B_p$, take the basis $B_{\tilde p}$ provided by Lemma \ref{lema3}, and let $L$ 
be the linear isometry $T_pM \longrightarrow\tilde T_{\tilde p}\tilde M$ satisfying $L (B_p)=B_{\tilde p}$. 
Now, define:
$$
F: \B^A(r_0)\longrightarrow \tilde \B^A(4r_0), \qquad 
F=   \exp_{\tilde p} \circ L \circ \exp_p ^{-1},
$$   
This map, as well as all $\tilde \phi_i ^{-1}\circ \phi_i$ are defined on the whole $\B^A(r_0)$ because their domains and codomains depend on the exponential map as in Lemma \ref{lema2}.
Now, The $C^2$ uniform convergence of 
$\{(\tilde \phi_i ^{-1}\circ \phi_i)_{p}^*(\tilde g)\}$ to $g$ stated in Lemma \ref{lema3} permits a similar reasoning as in Lemma \ref{lema1}. Namely, first it implies  the convergence  of the corresponding Levi-Civita connections $\{(\tilde \phi_i ^{-1}\circ \phi_i)_{p}^*(\tilde \nabla)\}$ to $\nabla$ and, then,  the  $C^1$-dependence of the parallel transport   with the initial condition will make to converge uniformly $F^*(\tilde g)$ to $g$.\footnote{\label{foot:Ambrose}Notice that the required convergence at this step as well as in Lemma \ref{lema1}, is $C^2$ because this implies that the Christoffel symbols (as coefficients in the ODE system for parallel transport) are $C^1$.  This agrees with the regularity required for the classical  Cartan--Ambrose--Hicks theorem, where a map similar to $F$ is constructed and the existence of an isometry is obtained under the preservation of the radial curvature  (notice that this theorem can be extended to the affine case \cite[Theorem 4]{NY} and, then, to the semi-Riemannian one).}
Finally, the fact the $F$ is an isometry also implies $F (\B^A(r_0)) = \tilde \B^A(r_0)$ by the construction of these convex neighborhoods.\footnote{We use \textit{convex neighborhoods} in the sense of O'Neill \cite[Definition 5.5]{Oneill}, that is, an open set which is a (star-shaped) normal neighborhood of all its points.} 

 For the last assertion, notice first  that the completeness of  the geodesics starting at $\tilde p$  
permits to extend $F$ as a local isometry on any normal neighborhood  of\footnote{\label{foot_fin}  The completeness of the geodesics from $\tilde p$ is used only for this purpose. Thus, given $\C$ it is enough that the domains of the corresponding geodesics from $\tilde p$ permit to define $F$.} $p$.  Let us consider any direction in the tangent space 
$u\in T_pM$ and the corresponding radial segment 
$\{\exp_p(ru):  0\leq r < r_u\}\subset \C$. Assume that  there is a maximum $r_0<r_u$ such that $F$ is not a local isometry beyond $r_0$.  Notice that, as  $F$ is smooth, it  must preserve the metric also at $p_0:=\exp(r_0 u)$. In particular, if $B_0$ is the basis at $T_{p_0}M$ obtained by parallely propagating along the segment the anchor  $B_p$, then $\tilde B_0:= (dF)_{p_0}B_0$ is a basis at a point $\tilde p_0\in \tilde M$. Thus, we can apply the previous part  for convergence to the two limits with the new anchors. This will provide a local isometry between neighborhoods of  $p_0$ and $\tilde p_0$ extending $F$, a contradiction. We also claim that this local isometry is injective (thus, global) by the argument stressed in the next remark.
    \end{proof}

    \begin{remark} \label{r_puntilla}
        By construction, $\{\tilde \phi_i^{-1}\circ\phi_i\}$ converges uniformly to $F$ on compact sets of its domain  $\C$. In particular, this implies the injectivity of $F$ claimed at the end of the previous proof. Indeed, otherwise there would exist points $q,q'\in \C$ satisfying $F(q)=F(q')=: \tilde q$. 
        Let $W_q, W_{q'} \subset \C$ be open disjoint neighborhoods of     $q, q'$, resp. Then, $F(W_q)$ is a neighborhood of $\tilde q$ and, by assumption $\tilde q=F(q')=\lim_i (\tilde \phi_i^{-1}\circ\phi_i) (q')$. Thus,  $(\tilde \phi_i^{-1}\circ\phi_i) (q')$ goes into $F(W_q)$ (eventually, in $(\tilde \phi_i^{-1}\circ\phi_i) (W_q)$), in contradiction with the injectivity of  $\tilde \phi_i^{-1}\circ\phi_i$.    
    \end{remark}

Next, this semi-local result is used to obtain  the global one stated in the Introduction  (Theorem  \ref{coro_simplyconnected}). Notice that a connected semi-Riemannian metric is called {\em inextensible} when it does not admit an isometric embedding as an open subset strictly included in another connected semi-Riemannian manifold. 

\begin{remark}\label{r_inext}
This notion  is standard, 
and it is developed in
Beem et al. book \cite[\S 6]{BEE}  in the indefinite case, where some subtleties appear. Among 
them, notice that {\em geodesic completeness, even only for geodesics on one of the three types (timelike, lightlike or spacelike), implies inextensibility} \cite[Prop. 6.16]{BEE}. 

It is worth pointing out  that we use (global) inextensibility, which is less restrictive than the local one defined in  \cite[Definition 6.17]{BEE}). This happens even in the Riemannian case. For example, 
consider a cone (up to its vertex 0), constructed from a  circular section of the plane of angle $\theta\in]0,\pi[$ by identifying naturally its two sides. Clearly, it  is not globally extensible to 0 (the  length of circumpherences centered at $0$ does not approach  $2\pi r$ fast enough when $r\to 0$) but it is locally extensible. In fact, an open circular  subsection of angle $\theta/2$ is  isometric to an open region of $\R^2$ and, thus,  extensible to $\R^2$, (compare with 
\cite[Example 6.21]{BEE}).
\end{remark}
\begin{proof}[Proof of Theorem \ref{coro_simplyconnected}]   
		Assume  geodesic completeness first. 
       
       Extend the local isometry $F$ in Theorem \ref{teor:uniqueness_semi-riemannian} to a local isometry $F:M\longrightarrow \tilde M$ as follows. 
    For each $q\in M$ consider a curve $\gamma: [0,1]\rightarrow M$  from $p$ to $q$, cover it with a finite number of $g$-convex neighborhoods with convex intersections\footnote{The existence of these coverings for the whole manifold is reasoned in \cite[Prop. 5.10]{Oneill} (a star refinement should be used therein).}   and  choose a
    Lebesgue number $\delta=1/m$ of the covering so that each 
    $\gamma_l:=\gamma|_{[(l-1)\delta,l\delta]}$ lies in the convex neighborhood $\C_l$, for $l=1, \dots,  m$. Theorem \ref{teor:uniqueness_semi-riemannian} permits to find the isometry on $\C_1$ and, applying it again with anchoring on $\gamma_1(\delta)$ (as in the proof of the last part of that theorem), $F$ is extended to $\C_2$ 
    (both agreeing in the convex intersection $\C_1\cap \C_2$) and, proceeding inductively, to $\C_m$ (thus to $q$). 
    
    Notice that the extensions are locally independent of the chosen paths (indeed, they are independent in each convex neighborhood). In the case that $M$ is simply connected, the global   independence of the extension with the choice of $\gamma$ 
    comes from an standard argument on monodromy. If simple connectedness is dropped, assume  that, using a second curve $\hat \gamma$ from $p$ to $q$ and covering it with convex $\hat \C_{\hat l}$, $ \hat l=1,\dots \hat m$, we construct a second map $\hat F$, which will agree with $F$ around  any convex neghborhood of $p$ but, eventually $\hat F(q) \neq F(q)$. Notice that all  the convex neighborhoods $\C_l, \hat \C_{\hat l}$ can be    assumed compact and, by  Remark \ref{r_puntilla}, 
    $$\{\tilde \phi_i^{-1}\circ\phi_i\circ \gamma\}\rightarrow  F\circ \gamma , \qquad \{\tilde \phi_i^{-1}\circ\phi_i\circ \hat  \gamma\}\rightarrow \hat F\circ \hat \gamma ,$$    
    uniformly on the  domains $[0,1]$. In particular, $\hat F(q) =\lim_i \left(\tilde \phi_i^{-1}\circ\phi_i(q)\right) =F(q)$, as required. Injectivity  holds 
    by Remark \ref{r_puntilla}.  Finally, the geodesic completeness of $g$ implies that $F$ is  a covering map \cite[Corollary 7.29]{Oneill}, thus onto too.\footnote{ Alternatively,   the roles of $p$ and $\tilde p$ can be interchanged to prove, for example, injectivity, as  $F^{-1}$ 
          would be well defined locally around 
          $\tilde p$ and then extended globally.} 

\smallskip

\noindent Assume  only inextensibility now   
 and modify the previous proof  as follows.

To check that  $F$ (which is well defined on some maximum open subset of $M$ and, then, it is an isometry onto its image) can be defined on the whole $M$, recall that, otherwise, the procedure above would permit to find a geodesic $\gamma:[0,1]\rightarrow M$  from $p$ such that $F$ is well defined 
on $\gamma([0,1))$ but cannot be extended to $q:=\gamma(1)$. Choose a convex neighborhood $W$ of $q$, let $p_0:=\gamma(t_0)$ for some $t_0$ close to 1 so that  $p_0 \in W$ ($W$ is then regarded as a convex normal neighborhood of $p_0$) and let $\tilde p_0:= F(p_0)$. 
Choose a basis  of $T_{p_0}M$, the natural Euclidean scalar product induced by it and its unit sphere $S_0$
 and define the $W$-cut locus functions
$$
\begin{array}{lll}
       c_W: S_0\rightarrow (0,\infty], & & c_W(u)=\sup\{s\in \R: \; \exp_{p_0} (su) \in W\}, 
     \\     
       \tilde c_W: S_0\rightarrow (0,\infty], & & \tilde c_W(u)=\sup\{s\in \R: \; \exp_{\tilde p_0} \, \hbox{is well defined on} \, d(\exp_{p_0})(su) \}. 
\end{array}
$$
With no loss of generality, we can assume $u_q\in S_0$ for $u_q$ satisfying $\exp_{p_0} (u_q) = q$ (thus, $\tilde c_W (u_q) = 1$) and $c_W \geq 3$. Now, define $W_2$ as the open normal neighborhood of $p_0$ determined by $c_{W_2} \equiv 2$, and consider the disjoint union $\tilde M \sqcup W_2$. In this union, identify each point $p'\in W_2$ with $F(p')$  whenever $F$ is defined on $p'$.
In this case, also neighborhoods of $p$ and $p'$ are identified and, thus, the projection on the quotient 
 $\pi: \tilde M \sqcup W_2 \rightarrow  (\tilde M \sqcup W_2)/\sim$ is an open map\footnote{Indeed, any open connected set $V\subset \tilde M \sqcup W_2$ (thus, included in one of the connected parts, $\tilde M$ or $ W_2$) can be written as $V=V_\sim \sqcup V_* $ where all the points of $V_\sim$ are included either in the domain or in the image of $F$ (thus, $V_\sim$ is open too) and the points of $V_*$ will not be identifiable with others in the quotient. Then $\pi(V)=\pi(V_\sim) \sqcup \pi(V_* )$ and this set is open because $\pi^{-1}(\pi(V))= \pi^{-1}( \pi  (V_\sim)) \sqcup V_* $, that is,  either $\pi^{-1}(\pi(V))= V \cup F(V_\sim)$ or $\pi^{-1}(V)= V \cup F^{-1}(V_\sim)$ (in any case, an open set in $\tilde M \sqcup W_2$ homeomorphic to the disjoint union of $V$ and a set homeomorphic to  $V_\sim$).}.
The quotient contains $q$ (which cannot be identified with a point in $\tilde M$ by hypothesis), and it will be  the required extension. The key point is  Hausdorffness because, once  achieved,  $(\tilde M \sqcup W_2)/\sim$ becomes a manifold (because the set of identified points is open in each one of the two manifolds) and, by  construction, $(\tilde M \sqcup W_2)/\sim$  is also naturally endowed  with a semi-Riemannian metric  globally extending $\tilde M$, giving us a contradiction as required.  To  prove  Hausdorffness, let us reason by contradiction and assume that   
$[r_\infty], [\tilde r'_\infty]$ are non-Hausdorff related. These points cannot be equivalence classes of points lying both in $\tilde M$ or both in $W_2$ (as $\tilde M$ and $W_2$ are Hausdorff, $F$ is injective and $\pi$ is open) so, we can assume: (i) 
  $r_\infty \in W_2   \;   \hbox{and $F$ cannot be defined on $r_\infty$} $, 
   and (ii) $  \tilde r'_\infty \in \tilde M  \; \hbox{and the image of $F$ cannot include $\tilde r'_\infty $}$. %
Now, take a topological basis of nested convex neighborhoods, 
$\{\tilde U^k_\infty  (\subset \tilde M) \}_k$ of 
$\tilde r'_\infty$, and $\{W_\infty^k (\subset W_2)\}_k$ of 
$r_\infty$. Notice that, for each $k$, the  intersection   $[ W_\infty^k ] \cap [\tilde U^k_\infty]$ contains a point $[z_k]\in  (\tilde M \sqcup W_2)/\sim$ 
(because of non-Hausdorffness) that is, each $[z_k]$ comes from a non-trivial identification of some $z_k \in  W_\infty^k $ with $\tilde z_k:=F(z_k)\in \tilde U^k_\infty,$ 
and such that $z_k\rightarrow r_\infty$, $\tilde z_k\rightarrow \tilde r_\infty'$. 
Now, focus on $z_1\in W^1_\infty$,  regarding $W^1_\infty$ as a convex neighborhood of $z_1$, and $\tilde z_1 \in \tilde U^1_\infty$. Recall that  these neighborhoods also contain, respectively, all the $z_k$'s and $\tilde z_k$'s. Taking into account Theor. \ref{teor:uniqueness_semi-riemannian} and footnote \ref{foot_fin}, the isometry $F$ sends the  unique radial geodesic $\gamma_k$ from $z_1$ to $z_k$ to the unique radial geodesic $\tilde \gamma_k$ from $\tilde z_1$ to $\tilde z_k$.
By continuity, $F$ also maps the unique geodesic from $z_1$ to $r_\infty$ to the unique geodesic from $\tilde z_1$ to $\tilde r'_\infty$  and, thus, $r_\infty$ and $\tilde r'_\infty$ are identified.


To check that the local isometry $F$ is then a covering, by using \cite[Theorem 7.28]{Oneill} it is enough to check that, for any geodesic $\tilde \sigma:[0,1]\rightarrow 
\tilde M$ and $p\in M$ with $F(p)=\tilde\sigma(0)$, there exists a lift $ \sigma:[0,1]\rightarrow  M$ through $F$ starting at $p$. Reasoning 
by contradiction as above, we can assume that the lift fails at $1$, choose a convex neighborhood $\tilde W$ of $\tilde \sigma(1)$ and $t_0$ close to $1$ such that $\tilde \sigma(t_0)\in \tilde W$. Then, a local extension of $M$ around $F^{-1}(\tilde W)$ will be achieved, as required. 
\end{proof}


\begin{remark}\label{r_Geroch} 
	In 1969  \cite{Geroch},  R. P. Geroch studied  the limit of a smooth one-parameter family of spacetimes from an intrinsic viewpoint. This is obviously more restrictive than the Cheeger-Gromov convergence explained above (for example,  
 the difficulties which led to Lemmas 5.18--5.20 would not appear). However, the main dissimilarity relies on the different aims of both approaches.  
Indeed, Geroch's goal was to construct a  maximal limit space. With this aim, he introduced ideas as the convergence of a curve of frames which underlie pointed  Cheeger-Gromov convergence and  the anchored one here. However, he does not intend to obtain results on uniqueness of limits up to isometries, as standard in Cheeger-Gromov's and here.
Next, we summarize briefly his approach and explain the relation with the Cheeger-Gromov setting.

 Essentially, Geroch studies a smooth family of spacetimes 
$\{(M_\lambda, g_\lambda), \lambda >0\}$ which depend smoothly on $\lambda$ and match in  a 1-higher dimensional  manifold $\m$ endowed with a degenerate dual metric $g^{ab}$ with kernel Span$\{d\lambda\}$. Then,  he considers any possible  limit manifold  $\m'$ with boundary satisfying: (i) $\m$ is isometrically embedded in the interior of $\m '$  inducing a function $\lambda'$ on it, and (ii) the boundary $\partial \m'= \{\lambda' =0\}$ is also a smoothly matched spacetime.  He stresses that $\m'$ might be non-Hausdorff  (see \cite[footnote 3]{Geroch}). 

 For each limit space $\m'$,  Geroch constructs a maximal one   $\bar \m$ and claims that  it is unique.  This is obtained by  considering all its further possible extensions and, then, their disjoint union $\n$  up to two relations of equivalence. The first one identifies all the points in the interior of the limit manifolds with  the original one in $\m$. For the second one, given two extensions  $\m_1, \m_2$ he identifies two points in the boundary $p_1 \in \m_1, p_2\in \m_2$ when there is a curve of frames $\omega(\lambda), \lambda >0$ in $\m$ such that the curve in each $\m_i$ identified with $\omega(\lambda)$ converges to a frame of $\partial \m_i$ at each $p_i$. 

The assumption of the convergence of frames  implies that, whenever $p_1$ and $p_2 $ are identified, small open neighborhoods of them $U_1, \subset \partial \m_1$ and $U_2\subset \partial \m_2 $ are identified too. Based on this, Geroch asserts that the maximal limit $\bar \m$ is a  (possibly non-Hausdorff) manifold. 

Indeed, when $\m_1, \m_2$ admit a common converging  curve of frames,  one can ensure only that two (biggest) open neighborhoods $U_1, U_2$ are isometric. However, the isometry cannot be extended to the points of the boundary of each $U_i$ in $\partial \m_i$. These non-identified points naturally lead to non-Hausdorffness. 

In comparison,  when the uniqueness of limits   holds as studied here  (i.e., when the hypotheses of Theorem \ref{coro_simplyconnected}, implicitly assumed by Geroch, hold)  then each sequence of converging frames determines a unique isometry class of limits,  and each Geroch's limit $\m'$ would Hausdorff.  
 This seems enough for any purpose, even though one might try to reformulate Geroch's proposal regarding the set all the non-isometric limits as non-Hausdorff separated limits of the original sequence (or family)  of spacetimes\footnote{
Notice, however, Geroch claims that $\partial \m_i$ is taken Hausdorff, connected and non-empty \cite[p. 182, item 2]{Geroch}.}.   



\end{remark}

\section{Convergence of globally hyperbolic spacetimes}\label{sec:convergence_globally_hyperbolic}

As discussed in Section~\ref{subsec:5_2_1}, in the spacetime setting there is no canonical norm available to establish convergence. However, a temporal function enables the selection of a representative metric $g^\tau$ within the conformal class $[g]$, along with a timelike vector field $T$, which can be normalized to have unit length. Furthermore, in the globally hyperbolic case, temporal functions that are $h$-steep yield a Wick-rotated Riemannian metric $g_W ^\tau$ that is complete and can thus be used to study convergence.

We begin by studying spacetimes endowed with a privileged timelike vector field $T$. In future subsections, the vector field $T$ will be $\nabla \tau$ for a temporal function $\tau$.

\subsection{Convergence of Wick-complete anchored spacetimes}\label{sec:convergence_observed_spacetimes}


\begin{definition}
A {\em Wick-complete anchored spacetime $(M,g,T,p)$} is a spacetime $(M,g)$ together with a basepoint $p$ and a unit\footnote{\label{foot:T} This condition is imposed by simplicity. Otherwise, one can carry out equally a Wick rotation. Moreover, a normalization could be used and consider the split between convergence of conformal structures and conformal factors as in the case of $h$-steep anchoring later.}
 timelike vector field $T$ (with $g$-dual form denoted $T^\flat = g(T, \cdot)$) such that the Wick-rotated metric 
$$  g_T := g + 2 T^\flat \otimes T^\flat  $$ 
is a complete Riemannian metric and we will denote $|\cdot|_T = |\cdot|_{g_T}$. In particular, such a spacetime is anchored as a semi-Riemmannian manifold by $\Span(T_p)$.
\end{definition}


\begin{definition}[Cheeger-Gromov convergence for Wick-complete anchored spacetimes]\label{def:lorentz_convergence}
	A sequence of Wick-complete anchored spacetimes $(M_i, g_i, T_i, p_i)$ is said to {\em (Cheeger-Gromov) converge in the $C^{k}$ topology}, $k \geq 0$, to a Wick-complete anchored spacetime $(M,g,T,p)$ if
	\begin{enumerate}[label=(\roman*)]
		\item the pointed manifolds $(M_i, g_i, p_i)$ converge in $C^k$ to $(M,g,p)$ in the sense of Definition \ref{def:riemann_convergence}, and
		\item $(\phi_i^{-1})_* (T_i)$ converges in 
         $C^{k}$ to $T$.
	\end{enumerate}
\end{definition}

{We now prove that the limit of a sequence of Wick-complete anchored spacetimes is unique up to isometry. The associated Wick-rotated metrics are constructed from the two elements that remain under control: the sequence of Lorentzian metrics and the corresponding observer fields, both of which converge. This structure enables the comparison of the resulting Riemannian metrics and provides uniform control over the diffeomorphisms between different limits, especifically $F_i = \psi_i^{-1} \circ \phi_i : M \to N$. Moreover, the completeness of the Wick-rotated metrics yields a natural exhaustion for the spacetimes.} 

\begin{proposition}\label{prop:uniqueness_observed_spacetimes}
	The limit of a $C^0$ converging sequence of Wick-complete anchored spacetimes $(M_i, g_i, T_i, p_i)$ is unique up to isometries. 
\end{proposition}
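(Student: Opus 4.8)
The plan is to reduce the statement to the uniqueness of Cheeger--Gromov limits for complete Riemannian manifolds (Theorem~\ref{prop:uniqueness_riemann_convergence}) by passing to the Wick-rotated metrics, and then to upgrade the resulting Riemannian isometry to an isometry of the full Lorentzian-plus-vector-field structure. Suppose $(M_i,g_i,T_i,p_i)$ converges to two Wick-complete anchored spacetimes $(M,g,T,p)$ and $(N,g',T',p')$, via embeddings $\phi_i\colon U_i\subset M\to M_i$ and $\psi_i\colon U_i'\subset N\to M_i$ respectively, and write $g_{i,T_i}:=g_i+2\,T_i^\flat\otimes T_i^\flat$ for the (complete) Wick-rotated metric of each element of the sequence.

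First I would show that the Wick-rotated Riemannian metrics converge to both candidate limits. Since the Wick rotation $g_i\mapsto g_{i,T_i}$ is a pointwise (indeed polynomial) operation in $g_i$ and $T_i$, the $C^0$-convergences $\phi_i^*g_i\to g$ and $(\phi_i^{-1})_*T_i\to T$ yield $\phi_i^*T_i^\flat=(\phi_i^*g_i)\big((\phi_i^{-1})_*T_i,\cdot\big)\to g(T,\cdot)=T^\flat$, and hence $\phi_i^*g_{i,T_i}\to g_T$ in $C^0$; likewise $\psi_i^*g_{i,T_i}\to g'_{T'}$. As $g_{i,T_i}$, $g_T$ and $g'_{T'}$ are all complete Riemannian metrics, this exhibits $(M_i,g_{i,T_i},p_i)$ as a $C^0$-converging sequence of complete pointed Riemannian manifolds with two limits $(M,g_T,p)$ and $(N,g'_{T'},p')$. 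By Theorem~\ref{prop:uniqueness_riemann_convergence} these are isometric; the isometry is obtained (via the Arzel\`a--Ascoli construction underlying that theorem) as the subsequential $C^0$-limit $\Phi\colon M\to N$ of the transition maps $F_i=\psi_i^{-1}\circ\phi_i$, which are uniform quasi-isometries between $g_T$ and $g'_{T'}$ with constants tending to $1$; moreover $\Phi(p)=p'$ (using $\phi_i^{-1}(p_i)\to p$ and $\psi_i^{-1}(p_i)\to p'$), and by Myers--Steenrod $\Phi$ is a smooth $g_T$--$g'_{T'}$ isometry.

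It remains to see that $\Phi$ respects the Lorentzian data. Here I would use the reconstruction identity $g=g_T-2\,g_T(T,\cdot)\otimes g_T(T,\cdot)$ (and its analogue on $N$), which expresses the Lorentzian metric functorially in terms of the Wick-rotated metric and the unit field, since $g_T(T,\cdot)=-T^\flat$ and $T^{\perp_g}=T^{\perp_{g_T}}$. Consequently it suffices to prove $\Phi_*T=T'$, and then $\Phi^*g'=g$ follows automatically. To this end I would track the fields through the transition maps: because $(F_i)_*\big((\phi_i^{-1})_*T_i\big)=(\psi_i^{-1})_*T_i\to T'$ holds exactly, while $(\phi_i^{-1})_*T_i\to T$ in $C^0$, the uniform quasi-isometry bounds on $dF_i$ give $(F_i)_*T\to T'$ in $C^0$ as well.

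The main obstacle is the final passage to the limit: deducing $\Phi_*T=T'$ from $(F_i)_*T\to T'$ and $F_i\to\Phi$ is not automatic, because the pushforward involves the differentials $dF_i$, which $C^0$-convergence of the maps alone does not control. The feature to exploit is that the $F_i$ are \emph{asymptotic isometries}, $F_i^*g'_{T'}\to g_T$ in $C^0$; applied to $G_i:=\Phi^{-1}\circ F_i\to \mathrm{id}$ with $G_i^*g_T\to g_T$, this rigidity forces $dF_i\to d\Phi$ (at least along a subsequence and in a suitable weak sense), which is precisely what is needed to transfer both $(F_i)_*T\to T'$ and $F_i^*g'\to g$ to the limiting identities $\Phi_*T=T'$ and $\Phi^*g'=g$. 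Once this is secured, $\Phi$ is the desired isometry of Wick-complete anchored spacetimes, and uniqueness follows.
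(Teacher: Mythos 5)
Your proposal follows essentially the same route as the paper's proof: Wick-rotate to obtain $C^0$-convergence of the complete Riemannian metrics $g_{i,T_i}$ to both $g_T$ and $g'_{T'}$, invoke the Riemannian $C^0$-uniqueness theorem to extract a limit isometry $F$ from the transition maps $F_i=\psi_i^{-1}\circ\phi_i$, deduce $F_*T=T'$ from $(F_i)_*T\to T'$, and reconstruct the Lorentzian isometry via $g=g_T-2\,T^\flat\otimes T^\flat$, exactly as the paper does. The only difference is that you explicitly flag the subtlety in passing from $(F_i)_*T\to T'$ and $F_i\to F$ to $F_*T=T'$ (which requires some control on the differentials $dF_i$) and sketch a rigidity argument to handle it, a step the paper's proof asserts without comment -- so your treatment is, if anything, slightly more careful at that point.
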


\begin{proof}
	Assume that $(M_i,g_i,T_i,p_i)$ converges in $C^{0}$ with diffeomorphisms $\phi_i$ to the Wick-complete anchored spacetime $(M,g,T,p)$, and with diffeomorphisms $\psi_i$ to the Wick-complete anchored spacetime $(M',g',T',p')$. We will prove that the pointed complete Riemannian manifolds $(M_i, (g_i)_{T_i}, p_i)$ converge in $C^0$ to both $(M,g_T,p)$ and $(M',g'_{T'},p')$ in the sense of Definition \ref{def:riemann_convergence}. As $C^0$ convergence is enough to guarantee uniqueness of the limits in the Riemannian case, this yields an isometry 
	$F : (M,g_T) \to (M', g'_{T'})$
	satisfying  
	$F(p) = p'$
	and 
	$F_*(T) = T'$.
	As a consequence, we can conclude that 
	$F: (M,g) \to (M',g')$
	is also a Lorentzian isometry.
	
	Fix a compact subset $K \subset M$. Since $(\phi_i ^{-1})_* (T_i)$ converges to $T$ and $\phi_i^{*} g_i$ converges to $g$ in $C^0$ uniformly on compact sets, it follows that
	$\phi_i ^*(T_i^{\flat})  \rightarrow T^{\flat}$ and $\phi_i^* (T_i^{\flat} \otimes T_i^{\flat}) \rightarrow T^{\flat} \otimes T^{\flat}$ in $C^0$ uniformly on $K$, which in turn implies that
	\begin{equation*}
		\phi_i^{*} g_{i, T_i} = \phi_i^{*} g_i + 2 \phi_i^{*}(T_i^{\flat} \otimes T_i^{\flat}) \rightarrow g_T = g + 2 T^{\flat} \otimes T^{\flat} \text{ in } C^0 \text{ uniformly on }K.
	\end{equation*}
	Similarly, given a compact subset $K'\subset M'$, the Wick-rotated metrics $\psi_i^{*} g_{i, T_i}$ converge in $C^0$ to $g'_{T'}$ uniformly on $K'$. Then, Theorem \ref{prop:uniqueness_riemann_convergence} yields a subsequence of the sequence of diffeomorphisms
	$F_i := \psi_i^{-1} \circ \phi_i$ converging to an isometry $F : (M,g_T) \to (M', g'_{T'})$ with $F (p) = p'$ and $F^* (g'_{T'}) = g_T$. Moreover, since $(\phi_i^{-1})_* (T_i)$ converges to $T$, and $(\psi_i^{-1})_* (T_i)$ converges to $T'$, necessarily
	${F_i}_* (T) = (\psi_i^{-1} \circ \phi_i)_* (T) = (\psi_i^{-1})_* \left( {\phi_i}_* (T) \right)$
	converges to $T'$. Therefore $F_* (T) = T'$ and $F^* (T'^{\flat}) = T^{\flat}$ \footnote{Here $T^\flat$, $T'^\flat$ and $T_i^\flat$ are taken with respect to $g$, $g'$ and $g_i$, respectively.}. Finally,
	\[
	F^* (g') = F^* (g'_{T'} - 2 T'^\flat \otimes T'^\flat) \\[2pt]
	= F^* (g'_{T'}) - 2 F^* (T'^\flat) \otimes F^* (T'^\flat) \\[2pt]
	= g_T - 2 T^\flat \otimes T^\flat \\[2pt]
	= g.
	\]
	This means that $F : (M,g) \to (M',g')$ is an isometry.
	
\end{proof}


\subsection{Convergence of conformal structures endowed with a temporal function} \label{subsec:6_2}


Having established a notion of smooth convergence for spacetimes that guarantees uniqueness, we now turn to the convergence of the conformal structures on globally hyperbolic spacetimes-with-timelike-boundary.

By Lemma \ref{lemma:g_w-completa} and Corollary \ref{coro:wick-rotated_globally hyperbolic}, a spacetime-with-timelike-boundary is globally hyperbolic if it admits an $h$-steep temporal function $\tau$. In this case, the $\tau$-representative $g^\tau$ of a globally hyperbolic conformal class yields a complete Riemannian metric through its Wick-rotated metric $g_{W}^\tau$, and $\tau$ is $(g_W^\tau /2)$-steep (recall Proposition \ref{p_hsteep_is_time}). 

\begin{definition}\label{def:convenient_temporal_function}
	A temporal function $\tau$ on a globally hyperbolic spacetime-with-timelike-boundary $(\bM,g)$ is said {\em adapted} if its $g$-gradient is tangent to $\partial M$.
	
	An {\em h-steep anchored spacetime} is a quadruple $(\bM, g, \tau, p)$ where $(\bM, g)$ is a (necessarily globally hyperbolic) spacetime-with-timelike-boundary, $\tau$ is an adapted $h$-steep temporal function and $p \in M$ is the basepoint.
\end{definition}

\begin{remark}
	We will frequently work with adapted $h$-steep temporal functions in the context of the convergence of globally hyperbolic spacetimes, and we will show that the limit spacetime also admits an $h$-steep function. If these functions are steep (property satisfied by the conformal representative of any $h$-steep function), then this property is also preserved in the limit.
\end{remark}

Observe that any h-steep anchored spacetime $(\bM,g,\tau,p)$ naturally determines a Wick-complete anchored spacetime $(\bM, g^\tau, \nabla \tau)$: indeed, one can take $T = \nabla \tau$ and normalize it to obtain a unit timelike vector field with respect to the $\tau$-representative $g^\tau$. This construction enables the study of convergence of globally hyperbolic spacetimes as Wick-complete anchored spacetimes, by analyzing the convergence of the associated complete Wick-rotated Riemannian metrics. In the limit, global hyperbolicity is recovered via a reverse Wick rotation. 

Following the nomenclature of Definition \ref{d_wick}, we work with the $\tau_i$-representatives $g_i^{\tau_i}$ of the conformal class of each metric $g_i$ (this representative is chosen univocally so that $|\nabla_i \tau_i|_{g_i} = 1$). Although this involves fixing a specific representative in each conformal class, the resulting convergence still captures the behavior of the underlying conformal structures, as the limit yields a temporal function and the corresponding canonical representative. 

\begin{proposition}[Uniform convergence on compact sets]\label{prop:convergence_conformal}
	Let $\bM$ be a smooth manifold with boundary. Let $\{g_i\}_{i \in \N}$ be a sequence of globally hyperbolic metrics each one endowed with an adapted $h$-steep temporal function $\tau_i$. Assume that 
	\begin{enumerate}[label=(\roman*)]
		\item the Wick-rotated metrics $h_i := g_W^{\tau_i} = g_i^{\tau_i} + 2d\tau_i ^2$ converge in $C^k$, $k\geq 0$, uniformly on compact sets to a complete Riemannian metric $h$ , and
		\item the sequence $\{\tau_i\}_{i \in \N}$ converges in $C^{k+1}$ to a function $\tau$ on $(\bM, h)$.
	\end{enumerate}
	Then the metrics $\{g_i^{\tau_i}\}_{i \in \N}$ converge in $C^k$ to the metric $g:= h - 2d\tau^2$ in $C^k$ uniformly on compact sets. Moreover, $g$ inherits a time orientation and an adapted $h$-steep temporal function $\tau$ (in particular, making it globally hyperbolic).
\end{proposition}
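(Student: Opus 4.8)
The plan is to reduce everything to the purely algebraic relation between a canonical representative and its Wick rotation, so that the statement becomes $C^k$-convergence of tensor fields on the fixed manifold $\bM$ together with continuity of a few pointwise quantities. By Definition \ref{d_wick}, the $\tau_i$-representative and its Wick rotation are related by $g_i^{\tau_i}=h_i-2\,d\tau_i\otimes d\tau_i$. First I would note that hypothesis (ii), $\tau_i\to\tau$ in $C^{k+1}$, gives $d\tau_i\to d\tau$ in $C^k$ and hence, by the Leibniz rule on compact sets, $d\tau_i\otimes d\tau_i\to d\tau\otimes d\tau$ in $C^k$. Combined with hypothesis (i), $h_i\to h$ in $C^k$, this yields
\[
g_i^{\tau_i}=h_i-2\,d\tau_i\otimes d\tau_i \longrightarrow h-2\,d\tau\otimes d\tau=:g \quad\text{in }C^k\text{ uniformly on compact sets,}
\]
which is the asserted convergence. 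It then remains to identify the limit tensor $g$ as a globally hyperbolic Lorentzian metric with the claimed properties.

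Next I would show $g$ is Lorentzian and $\tau$ temporal. Set $N:=\nabla^h\tau$. Since each $h_i=g_W^{\tau_i}$ makes $\tau_i$ unit, $|\nabla^{h_i}\tau_i|_{h_i}=1$; passing to the limit (using $h_i\to h$, $d\tau_i\to d\tau$ in $C^0$ and the nondegeneracy of $h$) gives $|N|_h=1$. Using the $h$-orthogonal splitting $T\bM=\Span(N)\oplus N^{\perp_h}$ and $d\tau=h(N,\cdot)$, a direct computation gives $g(N,N)=1-2=-1$, $g(N,v)=0$, and $g(v,v)=h(v,v)>0$ for $0\neq v\in N^{\perp_h}$; hence $g$ has signature $(-,+,\dots,+)$ and $N$ is a globally defined unit $g$-timelike field, which time-orients $(\bM,g)$ (declare $N$ future-directed). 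The same computation shows $\nabla^g\tau=-N$, which is past-directed timelike, so $\tau$ is a temporal function for $g$.

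The key observation for the global conclusions is that $|\nabla^g\tau|_g^2=-1$, so $g$ is already the canonical $\tau$-representative of its own conformal class; consequently $g_W^\tau=g+2\,d\tau\otimes d\tau=h$. Since $h$ is complete by hypothesis, Theorem \ref{intro:thm_A} applies to $(\bM,g,\tau)$ and yields that $\tau$ is $h$-steep with respect to the complete metric $g_W^\tau/2=h/2$, while Corollary \ref{coro:wick-rotated_globally hyperbolic} gives that $(\bM,g)$ is globally hyperbolic. Finally, for each $i$ the vector field $\nabla^{h_i}\tau_i$ is tangent to $\partial M$ (adaptedness is insensitive to which gradient is used, cf.\ the footnote in Theorem \ref{thm:stability}); since $\nabla^{h_i}\tau_i\to\nabla^h\tau=N$ in $C^0$ on compact sets and tangency to $\partial M$ is a closed condition, $N$ is tangent to $\partial M$, i.e.\ $\tau$ is adapted.

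I expect the only genuinely delicate point to be the $h$-steepness, and the reason I would route it through the completeness characterization rather than a direct limit is precisely the main obstacle: $h$-steepness is an inequality $d\tau_i(v)\geq\|v\|_{h_i/2}$ quantified over $g_i$-\emph{causal} vectors, and the causal cone varies with $i$, so a naive passage to the limit of these inequalities is not available. Recognizing that $g$ is its own canonical representative (so that its Wick rotation returns exactly the complete limit $h$) lets all the global properties be imported wholesale from Theorem \ref{intro:thm_A} and Corollary \ref{coro:wick-rotated_globally hyperbolic}; the rest of the argument is continuity and bookkeeping on compact sets.
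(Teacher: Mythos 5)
Your proof is correct and, in its first half, follows the paper's argument step for step: the $C^k$ convergence $g_i^{\tau_i}=h_i-2\,d\tau_i^2\to h-2\,d\tau^2$, the limit normalization $|d\tau|_h=1$ deduced from $|d\tau_i|_{h_i}=1$, and the resulting Lorentzian signature with $\nabla^g\tau$ past-directed unit are exactly the paper's computation. You deviate in two places. For $h$-steepness and global hyperbolicity, the paper argues directly: for any future-directed $g$-causal $v$, $h(v,v)-2\,d\tau(v)^2\leq 0$ gives $d\tau(v)\geq\tfrac{1}{\sqrt{2}}\,|v|_h$, so $\tau$ is $\tfrac12 h$-steep and Theorem \ref{thm:GHnulldistance-conborde} applies; you instead observe that $g$ is its own canonical representative, so $g_W^{\tau}=h$, and cite Theorem \ref{intro:thm_A} together with Corollary \ref{coro:wick-rotated_globally hyperbolic}. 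These are the same computation in different packaging (the steepness assertion of Theorem \ref{intro:thm_A} is Proposition \ref{p_hsteep_is_time}(1) with $\Lambda=1$), with one small caveat: Theorem \ref{intro:thm_A} is stated for an \emph{onto} temporal function, which you never verify. This is harmless --- the implication you use is pointwise and ontoness follows a posteriori from $(h/2)$-steepness with $h$ complete, and Corollary \ref{coro:wick-rotated_globally hyperbolic} carries no ontoness hypothesis --- but a one-line remark would make the citation clean. For adaptedness, the paper disposes of it in one sentence, whereas your argument (the direction of $\nabla^{h_i}\tau_i$ coincides with that of $\nabla^{g_i}\tau_i$ since Wick rotation and conformal rescaling only flip or scale the gradient direction, $\nabla^{h_i}\tau_i\to\nabla^h\tau$ in $C^0$ on compacta of the \emph{fixed} manifold $\bM$, and $T_x\partial M$ is closed in $T_x\bM$) is a genuine completion of that one-liner, and is correct. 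Finally, your closing concern about passing the $i$-dependent causal-cone inequalities to the limit is well placed, but note that the paper's route avoids it in the same way yours does: steepness of the limit is derived from the algebraic form $g=h-2\,d\tau^2$, not from limits of the $h_i$-steepness inequalities.
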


\begin{proof}
	Fix $1/2 >\epsilon > 0$. The $C^{k+1}$ convergence of $\{\tau_i\}_{i \in \N}$ implies the convergence of the tensors $d\tau_i$ to $d\tau$ and $d\tau_i^2$ to $d\tau^2$ uniformly on compact sets; that is, for any compact set $K \subset \bM$ there exists $i_0 = i_0 (\epsilon)$ such that, for $i \geq i_0$,
	\begin{equation} \label{eq:dtau_convergencia}
		\sup_{x \in K} |d\tau_i - d\tau|_h < \epsilon, \quad  \text{and} \quad \sup_{x \in K} |d\tau_i ^2 - d\tau^2|_h < \epsilon.
	\end{equation}
	Then, for any $x \in K$, 
	\begin{equation*}
		|g_i - g|_h =  | h_i - 2d\tau_i ^2 - h + 2d \tau^2 |_h 
		\leq |h_i - h|_h + 2|d\tau_i ^2 - d\tau^2|_h 
		< \epsilon + 2 \epsilon,
	\end{equation*}
	which proves the $C^0$ convergence on compact subsets of $\bM$, being obvious the convergence up to order $k$.
	
	We only need to prove that $g$ is Lorentzian. 
	First we will prove that $|\nabla \tau|_h = 1$. To do this, recall that $|\nabla_{i} \tau_i|_{i} = 1$ which, together with the first inequality in \eqref{eq:dtau_convergencia}, implies that
	\begin{equation}\label{eq:dtau_unitario_1}
		|d\tau|_h < |d\tau_i - d\tau| + |d\tau_i|_h  < \epsilon + |d\tau_i|_h \leq \epsilon + (1+\epsilon)^{1/2}|d\tau_i|_{h_i} < 1 + 2\epsilon.\\[2pt]
	\end{equation}
	Similarly, we get that 
	\begin{equation}\label{eq:dtau_unitario_2}
		1-2\epsilon < |d\tau|_h,
	\end{equation}
	which together with \eqref{eq:dtau_unitario_1}, implies that $|\nabla \tau|_h = |d\tau|_h = 1$. Hence, $g = h - 2d\tau^2$ is a Lorentzian metric, and we can choose the time-orientation given by the unit timelike vector field $-\nabla \tau$, which makes $\nabla \tau$ past-directed and unit. So, for any future-directed $g$-causal vector $v \in T\bM$, necessarily 		$h (v,v) - 2d\tau^2 (v) \leq 0$, and then,
	\begin{equation*}
		d\tau (v) \geq \frac{1}{\sqrt{2}} |v|_h,
	\end{equation*}
	which means that $\tau$ is $\frac{1}{2}h$-steep, where $h$ is complete. By Theorem \ref{thm:GHnulldistance-conborde}, the spacetime $(\bM, g)$ is globally hyperbolic, and $\tau$ is a Cauchy temporal function.
	
	Finally, the fact that the $g$-gradient of $\tau$ is tangent to $\partial M$ follows directly from the convergence of the metrics $g_i$ and the $C^{k+1}$ convergence of the temporal functions $\tau_i$.
\end{proof}

\begin{remark} \label{remark:convergence_conformal}
	\begin{enumerate}[label=(\Alph*)]
		\item The uniqueness of the limit in Proposition \ref{prop:convergence_conformal} is ensured by {Proposition \ref{prop:uniqueness_observed_spacetimes}}, since the $C^{k+1}$ convergence of the adapted $h$-steep temporal functions provides the required convergence of unit timelike vector fields.

		\item\label{remark:convergence_conformal_B} Note that, if we were working with non-normalized metrics, it would suffice to consider the splitting
		$$g_i = -\Lambda_i d\tau_i ^2 + \sigma_{i},$$
		and, under the same assumptions as before, together with additional uniform convergence of the conformal factors $\Lambda_i$ to a smooth function $\Lambda$, we would obtain convergence to a metric of the form $g = \Lambda (h - 2d\tau^2)$.
	\end{enumerate}
\end{remark}

We can upgrade Proposition \ref{prop:convergence_conformal} -- using the same ideas from its proof -- to obtain Cheeger-Gromov convergence of the conformal structures of a sequence of $h$-steep anchored spacetimes, proving Theorem \ref{thm:convergence_uptodiffeo} in the Introduction.


\begin{proof}[Proof of Theorem \ref{thm:convergence_uptodiffeo}]
	The proof is very similar to that of Proposition \ref{prop:convergence_conformal}. Hypotheses (i) and (ii) imply that for any compact subset $K \subset \bM$ and any $\epsilon > 0$, there exists $i_0 = i_0 (\epsilon)$ such that for $i \geq i_0$
	\begin{equation*}\label{eq1:uptodiffeo}
		\begin{array}{c}
			\sup_{x \in K}  |\phi_i^* g_W^{\tau_i} - h|_h< \epsilon \\ \\
		\sup_{x \in K} |\phi_i^* (d\tau_i) - d\tau|_h < \epsilon, \quad  \text{and} \quad \sup_{x \in K} |\phi_i^* (d\tau_i ^2) - d\tau^2|_h < \epsilon.
	\end{array}
	\end{equation*}
	Then, for any $x \in K$,
	\begin{equation*}
		|\phi_i^* (g_i^{\tau_i}) - g|_h =  | \phi_i^*(g_W^{\tau_i} - 2d\tau_i ^2) - h + 2d \tau^2 |_h \\[2pt]
		\leq |\phi_i^* g_W^{\tau_i} - h|_h + 2|\phi_i^* d\tau_i ^2 - d\tau^2|_h \\[2pt]
		< \epsilon + 2 \epsilon.
	\end{equation*}
	This establishes $C^k$ convergence of the metrics $g_i^{\tau_i}$, uniformly on compact subsets of $\bM$. Global hyperbolicity follows analogously to \eqref{eq:dtau_unitario_1} and \eqref{eq:dtau_unitario_2}, considering only the pullbacks via $\phi_i$. Since $h$ is a complete Riemannian metric, this also implies that $\tau$ is an adapted $h$-steep temporal function.

    Uniqueness is ensured by Proposition \ref{prop:uniqueness_observed_spacetimes}, as discussed in Remark \ref{remark:convergence_conformal}.
\end{proof}

The following result is a direct consequence of the previous theorem. 

\begin{corollary} \label{prop:equivalence_convergence}
	Let $(\bM_i, g_i, \tau_i, p_i)$ be a sequence of h-steep anchored spacetimes. Assume that the sequence $\{\tau_i \circ \phi_i \}$ converges in $C^{k+1}$ to a smooth function $\tau$ on a manifold $\bM$, where $\phi_i$ are diffeomorphisms defined on an exhaustion of $\bM$. Then the following are equivalent:
	\begin{enumerate}[label = (\roman*)]
		\item The spacetimes $(\bM_i, g_i^{\tau_i}, \tau_i, p_i)$ converges in $C^k$ to a h-steep anchored spacetime $(\bM, g^\tau, \tau, p)$ with diffeomorphisms $\phi_i$ (according to Definition \ref{def:lorentz_convergence}).
		\item The pointed complete Riemannian manifolds $(\bM_i, g_W^{\tau_i}, p_i)$ converge in $C^k$ to the complete Riemannian manifold $(\bM, g_W ^\tau, p)$ with diffeomorphisms $\phi_i$ (according to Definition \ref{def:riemann_convergence}).
	\end{enumerate}
	In the case they hold, if the lapses $\Lambda_i = |\nabla_i \tau_i|_i^{-2}$ converge uniformly to $\Lambda$, the metrics $g_i$ converge to $g := \Lambda g^\tau$.
\end{corollary}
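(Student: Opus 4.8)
The plan is to reduce the whole statement to the single algebraic identity tying together the $\tau_i$-representative, its Wick rotation, and the differential of $\tau_i$, and then to quote Theorem~\ref{thm:convergence_uptodiffeo} for the validity of the limit object. First I would record that, for each $i$, Definition~\ref{d_wick} gives $g_W^{\tau_i} = g_i^{\tau_i} + 2\,d\tau_i^2$; pulling back by $\phi_i$ and using $\phi_i^*(d\tau_i) = d(\tau_i\circ\phi_i)$ yields
\[
\phi_i^* g_W^{\tau_i} = \phi_i^* g_i^{\tau_i} + 2\,\big(d(\tau_i\circ\phi_i)\big)^2 .
\]
Since the standing hypothesis $\tau_i\circ\phi_i \to \tau$ in $C^{k+1}$ forces $\big(d(\tau_i\circ\phi_i)\big)^2 \to d\tau^2$ in $C^k$ uniformly on compact sets, the correction term has a fixed limit, so $\phi_i^* g_i^{\tau_i} \to g^\tau$ in $C^k$ \emph{if and only if} $\phi_i^* g_W^{\tau_i} \to g^\tau + 2\,d\tau^2 = g_W^\tau$ in $C^k$. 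This matches the metric-convergence part of Definition~\ref{def:riemann_convergence} appearing in (ii) with the one contained in Definition~\ref{def:lorentz_convergence}(i) applied to the Wick-complete anchored spacetimes $(\bM_i, g_i^{\tau_i}, \nabla_i\tau_i, p_i)$; the pointed/exhaustion data, the diffeomorphisms $\phi_i$, and the basepoint convergence $\phi_i^{-1}(p_i)\to p$ are common to both notions.

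The only remaining content of (i) is condition (ii) of Definition~\ref{def:lorentz_convergence}, i.e.\ the convergence of the unit timelike fields $(\phi_i^{-1})_*(\nabla_i\tau_i) \to \nabla\tau$, where $\nabla_i$ denotes the $g_i^{\tau_i}$-gradient. Here I would invoke naturality of the gradient under diffeomorphisms, $(\phi_i^{-1})_*(\nabla_i\tau_i) = \nabla^{\phi_i^* g_i^{\tau_i}}(\tau_i\circ\phi_i)$, and note that raising an index is an algebraic operation continuous in the metric coefficients wherever the metric is nondegenerate. Since the limit $g^\tau$ is Lorentzian (hence nondegenerate) the $\phi_i^* g_i^{\tau_i}$ are uniformly nondegenerate on compacts for large $i$; combining $\phi_i^* g_i^{\tau_i}\to g^\tau$ in $C^k$ with $\tau_i\circ\phi_i\to\tau$ in $C^{k+1}$ then gives $\nabla^{\phi_i^* g_i^{\tau_i}}(\tau_i\circ\phi_i) \to \nabla^{g^\tau}\tau = T$ in $C^k$. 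Thus the field convergence in (i) is \emph{automatic} once the metric and function convergences hold, and imposes nothing extra. Combining the two paragraphs gives (i)$\Leftrightarrow$(ii); that the limit is a genuine h-steep anchored spacetime -- so that $g^\tau = g_W^\tau - 2\,d\tau^2$ is Lorentzian and $\tau$ is an adapted $h$-steep Cauchy temporal function -- is precisely the output of Proposition~\ref{prop:convergence_conformal} and Theorem~\ref{thm:convergence_uptodiffeo}, which I would cite rather than reprove.

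For the final ``Moreover'', I would use $g_i = \Lambda_i g_i^{\tau_i}$ (Definition~\ref{d_wick}) to write $\phi_i^* g_i = (\Lambda_i\circ\phi_i)\,(\phi_i^* g_i^{\tau_i})$, so that multiplying the uniform convergence $\Lambda_i\circ\phi_i \to \Lambda$ by the already-established convergence $\phi_i^* g_i^{\tau_i}\to g^\tau$ produces $\phi_i^* g_i \to \Lambda\,g^\tau = g$ on compact sets, as in Remark~\ref{remark:convergence_conformal}(\ref{remark:convergence_conformal_B}). The main (and really the only) delicate point is the field-convergence step: one must verify that Definition~\ref{def:lorentz_convergence}(ii) adds nothing beyond the metric and function convergence, which is exactly the naturality/continuity-of-index-raising argument above. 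Everything else is bookkeeping around the identity $g_W^{\tau_i} = g_i^{\tau_i} + 2\,d\tau_i^2$, so I expect the argument to be short and to rest essentially on Theorem~\ref{thm:convergence_uptodiffeo}.
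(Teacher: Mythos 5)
Your proposal is correct and matches the paper's intended derivation: the paper states this corollary as a direct consequence of Theorem~\ref{thm:convergence_uptodiffeo}, whose proof is exactly your bookkeeping around the identity $g_W^{\tau_i} = g_i^{\tau_i} + 2\,d\tau_i^2$, and your observation that the vector-field condition in Definition~\ref{def:lorentz_convergence}(ii) is automatic from the $C^{k+1}$ convergence of the $\tau_i\circ\phi_i$ is precisely the point recorded in Remark~\ref{remark:convergence_conformal}(A), with the lapse statement handled as in Remark~\ref{remark:convergence_conformal}(B). Your explicit treatment of the reverse implication (i)$\Rightarrow$(ii) and of index-raising continuity only makes explicit what the paper leaves implicit, so there is no gap.
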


Notice that if $\bM_i = \bM$ and $\phi_i = id$ for all $i$, then the previous proposition becomes an equivalence between convergence of globally hyperbolic Lorentzian metrics and the convergence of their Wick-rotated counterparts. Thus we could use the convergence of the temporal functions, their lapses and the Wick-rotated metrics as a definition for convergence of $h$-steep anchored spacetimes and their conformal structures.


In summary, we have introduced the following definitions for convergence each one more restrictive than the previous one, but more practical.
\begin{enumerate}
    \item Anchored semi-Riemannian manifolds (Definition \ref{def:convergence_semi-riemann}). General and implying at least uniqueness of the limits around  the selected points for $C^2$ convergence (Theorem \ref{teor:uniqueness_semi-riemannian}, Theorem \ref{coro_simplyconnected}).
    \item Wick-complete anchored spacetimes (Definition \ref{def:lorentz_convergence}) For spacetimes endowed with a unit timelike vector field. Uniqueness of the limit under optimal $C^1$ convergence which turns out $C^0$ for $T$ (Proposition \ref{prop:uniqueness_observed_spacetimes})
    \item $h$-steep anchored spacetimes (Definition \ref{def:convenient_temporal_function} and Theorem \ref{thm:convergence_uptodiffeo}), which underlines the convergence of the conformal structure.
\end{enumerate}
Indeed, the last one is specially useful to obtain existence of limits, as explained next.

\subsection{A compactness theorem for globally hyperbolic spacetimes} \label{subsec:6_3}

We now employ Riemannian techniques to establish a compactness theorem for globally hyperbolic spacetimes, formulated in terms of their Wick-rotated metrics. This result illustrates the kind of compactness theorems that can be obtained by applying Riemannian methods to (complete) Wick-rotated metrics. Moreover, alternative notions of Riemannian convergence may also be explored within this framework to derive further convergence results for globally hyperbolic spacetimes.

\begin{corollary} \label{thm:compacidad_globalmente_hiperbolicos}
	Let $M$ be a smooth manifold without boundary, and let $\{g_i\}$ be a sequence of globally hyperbolic Lorentzian metrics on $M$, each endowed with adapted $h$-steep temporal functions $\tau_i$. Assume that
	\begin{enumerate}[label=(\alph*)]
		\item[(1)] the temporal functions $\tau_i$ converge in $C^{k+1}$ to a smooth function $\tau_{\infty}$, and
		\item[(2)] the lapse functions $\Lambda_i = |\nabla_i \tau_i|_{g_i}$ converge to $\Lambda_\infty$ in $C^k$, 
	\end{enumerate} 
	and the associated Wick-rotated metrics $g_W^{\tau_i}$ satisfy the conditions in Theorem \ref{thm:compactness_theorem_riemann} for a fixed point $p \in M$.
    
	Then, there exists a subsequence of $\{g_i\}_i$ that converges in $C^{k}$, uniformly on compact subsets of $M$, to a globally hyperbolic Lorentzian metric $g_\infty$ that admits $\tau_\infty$ as an $h$-steep temporal function.
\end{corollary}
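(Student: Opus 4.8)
The plan is to reduce the whole statement to the Riemannian compactness theorem applied to the Wick-rotated metrics, and then to transfer the resulting Riemannian convergence back to the Lorentzian setting through Theorem \ref{thm:convergence_uptodiffeo} (equivalently the last part of Corollary \ref{prop:equivalence_convergence}), reincorporating the lapse at the very end. First I would apply Theorem \ref{thm:compactness_theorem_riemann}, in its fixed-manifold form stated just after it, to the sequence $\{g_W^{\tau_i}\}$ of complete Riemannian metrics on $M$ with basepoint $p$: by assumption these have uniformly bounded geometry and an injectivity radius estimate at $p$, so a subsequence (not relabeled) converges in $C^\infty$, via diffeomorphisms $\phi_i$ defined on an exhaustion, to a complete Riemannian manifold $(\bM,h,p)$, which we identify with $M$. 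This yields both the complete limit metric $h$ and the full $C^\infty$ control on the $\phi_i$ that is needed below.

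The key intermediate step is to upgrade hypothesis (1) to convergence of the pulled-back functions $\tau_i\circ\phi_i$ in $C^{k+1}$, so that Theorem \ref{thm:convergence_uptodiffeo} can be invoked with these same $\phi_i$. Because the compactness theorem provides $C^\infty$ control of the identification maps (in the harmonic charts used to construct the limit), a further subsequence gives $\phi_i\to\phi_\infty$ in $C^\infty_{\mathrm{loc}}$ for a smooth local diffeomorphism $\phi_\infty$. Combined with the $C^{k+1}$ convergence $\tau_i\to\tau_\infty$ of hypothesis (1), a term-by-term chain-rule estimate shows that every derivative of $\tau_i\circ\phi_i$ up to order $k+1$ converges uniformly on compact sets, hence $\tau_i\circ\phi_i\to\tau:=\tau_\infty\circ\phi_\infty$ in $C^{k+1}_{\mathrm{loc}}$; moreover the identity $|d\tau_i|_{g_W^{\tau_i}}=1$ passes to the limit as $|d\tau|_h=1$, consistent with the estimates in Proposition \ref{prop:convergence_conformal}. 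I expect the convergence of the abstract diffeomorphisms $\phi_i$ furnished by the Riemannian compactness theorem, reconciled with the concrete convergence of the $\tau_i$ on the fixed manifold, to be the main obstacle; the rest is a bookkeeping of hypotheses.

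With these two convergences in hand, Theorem \ref{thm:convergence_uptodiffeo} applies directly: the spacetimes $(M,g_i^{\tau_i},\tau_i,p)$ converge in $C^k$ as Wick-complete anchored spacetimes to $(\bM,g,\tau,p)$, where $g=h-2\,d\tau^2$ is (necessarily) globally hyperbolic and $\tau$ is an adapted $h$-steep temporal function for $g$ — this being precisely where $h$-steepness with respect to the complete metric $\tfrac12 h$, and hence global hyperbolicity, is read off as in Proposition \ref{prop:convergence_conformal} and Theorem \ref{thm:GHnulldistance-conborde}. Finally I would reincorporate hypothesis (2): since $\Lambda_i\to\Lambda_\infty$ in $C^k$ uniformly on compact sets and $g_i=\Lambda_i\,g_i^{\tau_i}$, Remark \ref{remark:convergence_conformal}(B) (equivalently the closing statement of Corollary \ref{prop:equivalence_convergence}) gives that $\{g_i\}$ converges in $C^k$, uniformly on compact subsets of $M$, to $g_\infty:=\Lambda_\infty\,g$. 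As global hyperbolicity and $h$-steepness depend only on the causal cones, which are conformally invariant, $g_\infty$ remains globally hyperbolic and admits $\tau_\infty$ (under the identification $\tau=\tau_\infty\circ\phi_\infty$) as an $h$-steep temporal function, which is the asserted conclusion.
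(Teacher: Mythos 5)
Your overall skeleton coincides with the paper's (Riemannian compactness for the Wick-rotated metrics, transfer back via Corollary \ref{prop:equivalence_convergence}, then reincorporate the lapse), but your ``key intermediate step'' --- extracting a $C^\infty_{\mathrm{loc}}$ limit $\phi_i \to \phi_\infty$ of the Cheeger--Gromov embeddings and composing, $\tau := \tau_\infty \circ \phi_\infty$ --- is a genuine gap. The $C^\infty$ control furnished by Theorem \ref{thm:compactness_theorem_riemann} bounds the derivatives of $\phi_i$ only with respect to the \emph{varying} target metrics $g_W^{\tau_i}$; there is no fixed background metric on $M$ controlling the $\phi_i$, so Arzel\`a--Ascoli does not apply, and even when a limit exists it can fail to be a local diffeomorphism. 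Concretely, take $M=\R^2$, $g_i = -dt^2 + e^{2i}dx^2$, $\tau_i = t$, $\Lambda_i = 1$: the Wick metrics $dt^2 + e^{2i}dx^2$ are flat and complete, so all hypotheses on bounded geometry and injectivity radius hold, yet the natural identification maps $\phi_i(t,x) = (t, e^{-i}x)$ converge to the degenerate map $(t,x)\mapsto(t,0)$. So the claim ``a further subsequence gives $\phi_i \to \phi_\infty$ for a smooth local diffeomorphism'' is false in general, and the bridge from hypothesis (1) to the hypothesis $\tau_i\circ\phi_i \to \tau$ of Theorem \ref{thm:convergence_uptodiffeo} collapses.

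The paper's proof sidesteps this entirely: it invokes the \emph{fixed-manifold} form of the compactness theorem (the ``In particular'' paragraph following Theorem \ref{thm:compactness_theorem_riemann}), under which a subsequence of the metrics $g_W^{\tau_i}$ themselves converges uniformly on compact subsets of $M$ --- i.e., one may take $\phi_i = \mathrm{id}$ throughout. Then Corollary \ref{prop:equivalence_convergence} (equivalently Proposition \ref{prop:convergence_conformal}, which is stated precisely for this identity-map situation) applies verbatim with hypotheses (1)--(2) as given: $g_i^{\tau_i} = g_W^{\tau_i} - 2d\tau_i^2 \to h_\infty - 2d\tau_\infty^2$ uniformly on compacta, $\tau_\infty$ is $\tfrac12 h_\infty$-steep with $h_\infty$ complete (whence global hyperbolicity), and the lapse convergence yields $g_i \to \Lambda_\infty\,(h_\infty - 2d\tau_\infty^2)$; no composition with any $\phi_\infty$ is needed and $\tau = \tau_\infty$ directly. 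Your instinct that the abstract embeddings must be reconciled with the on-manifold convergence of the $\tau_i$ does point at a real subtlety (the fixed-manifold reduction is itself stated without proof in the paper, and fails if read as general Cheeger--Gromov convergence with wandering $\phi_i$, as the example above shows), but the correct resolution is to work in the fixed-manifold formulation from the start, not to extract a limit of the diffeomorphisms. The remainder of your argument --- the cone estimates giving $|d\tau_\infty|_{h_\infty}=1$, $h$-steepness, global hyperbolicity via Theorem \ref{thm:GHnulldistance-conborde}, and the conformal reincorporation of the lapse via Remark \ref{remark:convergence_conformal}(B) --- matches the paper's route once this step is repaired.
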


\begin{proof}
	The proof follows from Theorem \ref{thm:compactness_theorem_riemann}, which guarantees the existence of a subsequence $\{h_i\}_i$ converging to a complete Riemannian metric $h_\infty$. {Subsequently, Corollary~\ref{prop:equivalence_convergence} implies that a (further) subsequence $\{g_i\}$ converges to a globally hyperbolic Lorentzian metric $g_\infty$.}
\end{proof}

\begin{remark}
	
	By adapting the estimates from Lemma \ref{lemma:uniform_bounds_metrics_derivatives} to the setting of manifolds with boundary, one could establish a compactness theorem for globally hyperbolic spacetimes-with-timelike-boundary. In particular, if the extension techniques developed by M\"uller \cite{muller2018cheeger} -- based on height functions -- can be applied to a sequence of Wick-rotated metrics $g_{W}^{\tau_i}$ defined on a fixed manifold $\bM$, then it would be possible to extend all these metrics to a single manifold with boundary. This would allow us to invoke Theorem \ref{thm:compacidad_globalmente_hiperbolicos} to obtain convergence, and subsequently restrict the resulting limit back to the original manifolds with boundary. We leave this line of investigation for future work.
	
\end{remark}

\subsection{Reduction to a single product manifold} \label{subsec:diff_cauchy_hyp}

In the simple case where all the metrics lie on the same manifold, convergence on compact sets proves useful for establishing Cheeger–Gromov convergence and, even though Riemannian Cheeger-Gromov does not require the converging manifolds to be diffeomorphic, the particular case when this occurs and suitable diffeomorphisms are found becomes specially interesting. Notice that in the globally hyperbolic case, the manifolds are diffeomorphic if and only if they admit diffeomorphic Cauchy hypersurfaces \cite{splitting}. 
We will next show that the Cauchy orthogonal splitting of such spacetimes \cite{BernalSanchez2005Splitting,zepp:structure} allows us to construct a common splitting $\R \times \Sigma$ for all the metrics, where the $\R$-projection corresponds to the image of {\em all} temporal functions in the sequence -- and, eventually, the limit as well.

Consider a sequence of spacetimes $(M_i, g_i)$, each endowed with an adapted $h$-steep temporal function $\tau_i$ and a Cauchy hypersurface $\Sigma_i = \tau_i^{-1} (0)$, all diffeomorphic to one another. We resolve this by showing that there exists a natural and intrinsic identification of each $(M_i,g_i)$ with a common product $\R\times \Sigma$, allowing all the metrics to be viewed as defined on a shared domain.\footnote{This point of view has been used to study other questions such as the non-convexity of the space of globally hyperbolic metrics \cite[\S 3.4]{sanchez:slicings}.}

\begin{proposition}
    \label{lemma:difeomorfismo_RxSigma}
	Let $(M, g)$ be a globally hyperbolic spacetime-with-timelike-boundary, and $\tau: M \to \R$ an adapted $h$-steep temporal function with Cauchy hypersurface diffeomorphic to $\Sigma$. If $\phi : \tau^{-1} (0) \to \Sigma$ is a diffeomorphism then there exists a canonical diffeomorphism $\Phi: (M,g) \to \R \times \Sigma$ such that the following diagram commutes,
	\[
	\begin{tikzcd}
		& (M, g) \arrow[dr, "\tau" below] \arrow[r, "\Phi"] & \mathbb{R} \times \Sigma \arrow[d, "t"] \\
		& & \mathbb{R}
	\end{tikzcd}
	\]
	and the induced metric on the product can be written as $\Phi_* g = -\Lambda dt^2 + \sigma_t$, where $\sigma_t$ is a Riemannian metric on each slice $\{t\} \times \bSigma$.
\end{proposition}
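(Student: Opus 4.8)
The plan is to construct $\Phi$ from the flow of a normalized gradient field, exactly as in the Bernal--S\'anchez orthogonal splitting \cite{BernalSanchez2005Splitting, zepp:structure}, and then to transport the reference slice through $\phi$. First I would record the inputs. Since $\tau$ is an adapted $h$-steep temporal function on a globally hyperbolic spacetime-with-timelike-boundary, Theorem \ref{intro:thm_A} (equivalently Theorem \ref{thm:GHnulldistance-conborde}) ensures that $\tau$ is a Cauchy temporal function; hence each level set $\tau^{-1}(c)$ is a spacelike Cauchy hypersurface-with-boundary, and we may take $\tau$ onto $\R$ (Remark \ref{remark:lipschitz_uptorescaling}(2)). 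I would then introduce the vector field $X := \nabla\tau/g(\nabla\tau,\nabla\tau)$, which is future-directed timelike and satisfies $d\tau(X)=1$; because $\tau$ is adapted, $\nabla\tau$ and therefore $X$ is tangent to $\partial M$, so the flow $\Psi_s$ of $X$ preserves $\bM$ and its boundary. Along any integral curve one has $\tau(\Psi_s(p))=\tau(p)+s$, so $\tau$ is an affine parameter and each maximal integral curve is a future-directed timelike (in particular causal) curve.

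The one genuinely nontrivial step is completeness of the flow, i.e. that every maximal integral curve is defined on all of $\R$; I expect this to be the main obstacle, everything else being formal bookkeeping. I would argue as follows: a maximal integral curve is future- and past-inextendible, since a one-sided endpoint would, by local existence for the flow, contradict maximality. In a globally hyperbolic spacetime an inextendible causal curve meets every Cauchy slice, so along such a curve $\tau$ takes every value in $\R$; as $\tau(\Psi_s(p))=\tau(p)+s$, the parameter $s$ must range over all of $\R$. (Alternatively, completeness of the Wick-rotated metric $g_W^\tau$, guaranteed by Theorem \ref{intro:thm_A}, bounds the $g_W^\tau$-arclength of the curve over bounded $\tau$-intervals and rules out escape in finite flow-time.) The same Cauchy property shows that each integral curve crosses the reference slice $\tau^{-1}(0)$ exactly once.

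With completeness in hand, I would define the retraction $\rho:\bM\to\tau^{-1}(0)$ by $\rho(p):=\Psi_{-\tau(p)}(p)$ and set $\Phi(p):=(\tau(p),\phi(\rho(p)))$. The map $(t,x)\mapsto\Psi_t(\phi^{-1}(x))$ is a smooth two-sided inverse, so $\Phi$ is a diffeomorphism, and $t\circ\Phi=\tau$ holds by construction, yielding the asserted commuting triangle; note that $\Phi$ is canonically determined once $\phi$ is fixed, since $\Psi$ depends only on $g$ and $\tau$. A direct computation confirms that $\Phi$ conjugates $\Psi_s$ to the translation $t\mapsto t+s$ of the first factor, so $\Phi$ carries $X$ to $\partial_t$ and sends each slice $\tau^{-1}(t)$ onto $\{t\}\times\Sigma$.

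For the shape of the pushed-forward metric I would use that $X$ is $g$-orthogonal to the slices: for any $v$ tangent to $\tau^{-1}(t)$ one has $g(X,v)=d\tau(v)/g(\nabla\tau,\nabla\tau)=0$. Since $\Phi$ carries $X$ to $\partial_t$ and the slices to the factors $\{t\}\times\Sigma$, the metric $\Phi_*g$ has no mixed $dt\,dx$ terms, so $\Phi_*g=-\Lambda\,dt^2+\sigma_t$ with $\Lambda=-g(X,X)=|\nabla\tau|^{-2}>0$ and $\sigma_t$ the restriction of $g$ to each slice, which is positive definite because the slices are spacelike. This completes the plan; the boundary case requires no extra argument, as adaptedness keeps $X$ tangent to $\partial M$ so that $\tau^{-1}(0)$ is a manifold-with-boundary diffeomorphic, via $\phi$, to $\Sigma$.
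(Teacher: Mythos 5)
Your proposal is correct and follows essentially the same route as the paper: project each point along the (gradient) flow of $\tau$ to the reference slice $\tau^{-1}(0)$, compose with $\phi$, and pair with $\tau(p)$ to get $\Phi$. The only difference is one of detail -- the paper delegates the flow's completeness and the orthogonal form $-\Lambda\, d\tau^2+\sigma_\tau$ to Theorem \ref{theorem:splitting_boundary}, whereas you verify these directly (inextendible timelike integral curves crossing every Cauchy slice, and $g$-orthogonality of $X=\nabla\tau/g(\nabla\tau,\nabla\tau)$ to the level sets), which is a sound and self-contained filling-in of the same argument.
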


\begin{proof}
	Given any point $p \in M$, let $p_0\in \tau^{-1} (0)$ be its projection along the integral curve of the gradient vector field $\nabla \tau$ passing through $p$. Define $x_p := \phi (p_0)$, where $\phi:\tau^{-1}(0)\rightarrow \Sigma$ is a chosen diffeomorphism. Then, we define a global diffeomorphism $\Phi : M \to \R \times \Sigma$ by
	\begin{equation*}
		\Phi (p) = (\tau (p), x_p).
	\end{equation*}
As $t$ is the projection on the first factor, $t (\Phi(p)) = \tau (p)$ for all $p$, and the diagram commutes.
	
	Notice that $\Phi_* (g)$ transfers the orthogonal splitting in Theorem \ref{theorem:splitting_boundary} to $\R \times \Sigma$ as well as the conformal factor.
\end{proof}

\begin{remark} \label{remark:cauchy_diffeomorphic}
	The previous lemma offers a natural framework for analyzing the convergence of globally hyperbolic spacetimes with diffeomorphic Cauchy hypersurfaces (which, would  also apply to Geroch's  parametric case discussed in Remark \ref{r_Geroch}, supporting the approach for his study). In particular, it enables us to treat the sequence $\{(M_i,g_i)\}$ as a family of globally hyperbolic metrics defined on a common product manifold $\R \times \Sigma$, in a manner that preserves the splitting structure of both the Lorentzian metrics and their Wick-rotated Riemannian counterparts. Implicitly, all the adapted $h$-steep temporal functions on the individual manifolds are identified with the natural projection onto $\R \times \Sigma$. As a result, convergence of the induced metrics on compact subsets of $\R \times \Sigma$ also implies convergence in the sense of Definition \ref{def:lorentz_convergence}.
	
	Indeed, Corollary \ref{prop:equivalence_convergence} provides sufficient conditions for the sequence $(M_i, g_i, \tau_i)$ to converge -- up to diffeomorphisms -- provided the Cauchy hypersurfaces $\phi_i: \Sigma_i := \tau_i ^{-1} (0) \to \Sigma$ are diffeomorphic. Specifically, if:
	\begin{enumerate}[label=(\roman*)]
		\item the Wick-rotated metrics associated to the metrics $(\Phi_i^{-1})^* g_i$ converge to a complete Riemannian metric $h$, and
		\item the lapse functions $\Lambda_i$ converge uniformly to a function $\Lambda$,
	\end{enumerate}
	then the sequence $(M_i,g_i)$ converges to $(\R \times \Sigma, g)$ in the sense of Definition \ref{def:lorentz_convergence}, where $g := \Lambda (h- 2dt^2)$ defines a globally hyperbolic Lorentzian metric. 

\end{remark}

One could alternatively impose conditions analogous to those in compactness theorems directly on the Cauchy hypersurfaces $\Sigma^i_0 := \tau_i^{-1} (0)$, by requiring the sequence of Riemannian manifolds $(\Sigma^i_0, \sigma^i_0)$ -- where $\sigma^i_0$ denotes the induced metric -- to converge to a fixed Riemannian manifold $(\Sigma, \sigma_0)$. This convergence would, in turn, provide diffeomorphisms $\phi_i: \Sigma^i_0 \rightarrow \Sigma$ from each $i$.

\smallskip
\smallskip

\noindent 
This general possibility is useful to find sufficient conditions for 
Cheeger-Gromov convergence and have been used in simple cases. 
For example,  GRW spacetimes $I\times_f F, I\subset \R$ (see \cite{Romero1995GRW}) have been investigated using the null distance in \cite{AllenBurtscher2022Properties,Allen2023Nulldistanceandconvergence}, as well as within the framework of Lorentzian length spaces in \cite{Kunzinger2022Nulldistance}.
 Typically,  the convergence of the warping functions $f_i$ to a limit function $f_\infty$ in a fixed GRW splitting is taken as a standing assumption, and convergence is then considered in the sense of uniform convergence on compact sets. Whe emphasize, however, that this corresponds to a restrictive scenario within the broader framework of Cheeger-Gromov (intrinsic) convergence, which is capable of exploiting the presence of isometries to relate different representations of the same underlying geometry. In the next example, Cheeger–Gromov convergence  ``untangles'' divergences caused purely by changes of coordinates or isometries and  
 Cheeger-Gromov intrinsic compacteness theorems (for Riemannian and Wick-rotated metrics) also rule out coordinate divergences.

\begin{example}[Different representations of de Sitter space]\label{example:de-Sitter}
        {\em Classical de Sitter spacetime admits more than one decomposition as a GRW spacetime.\footnote{   Guti\'errez and Olea \cite{Gutierrez2009GRW} proved that it has distinct timelike, closed, and conformal 
        vector fields that induce different GRW structures  (being the only GRW spacetime with this property).  They have the form $\mathbb{R} \times_f \mathbb{S}^{n-1} (\mu)$, with warping function $f(t) = \frac{r}{\mu} \cosh (\frac{1}{r} t + b)$, where $r, \mu \in \mb R^+$ and $b \in \mb R$.}
        Choosing them, it is easy to find sequences of different expressions of (the same) de Sitter spacetime as 
         GRW metrics which do not converge uniformly on compact sets  and, for instance,  \cite[Thm 1.4]{AllenBurtscher2022Properties} would not apply. However, they converge in the Cheeger-Gromov setting 
        (as in Theorem~\ref{thm:convergence_uptodiffeo}) 
        and the compactness Theorem \ref{thm:compacidad_globalmente_hiperbolicos} may be applied for the Wick-rotated metrics $g_W^{\tau_i}$ of the corresponding splittings. 
        For example,  the GRW decompositions of de Sitter $g_i = -dt^2 + \cosh^2(t+i) g_{\mb S^{n-1}}$,  with the $h$-steep temporal functions $\tau_i = t+i$ do not converge  on compact subsets. Trivially, Cheeger--Gromov convergence holds (pulling back each $g_i$ via the diffeomorphism $F_i (t,x) = (t-i,x)$) and this can also be achieved by using Proposition \ref{lemma:difeomorfismo_RxSigma} . Moreover, the corresponding Wick-rotated Riemannian metrics $g_W^{\tau_i} = dt^2 + \cosh^{2} (t+i) g_{\mathbb{S}^{n-1}}$ are complete, have injectivity radius bounded away from 0 and upper bounded Riemann tensor $R$.}
\end{example}

\begin{appendices}
	
	\section{Time functions are locally Lipschitz up to rescaling}\label{app:lipschitz_uptorescaling}
	
	\begin{lemma}[Local rescaling]\label{lemma:time_lipschitz}
		Let $\tau$ be a time function on a spacetime-with-timelike-boundary $(\bM, g)$. For each $x \in \bM$ there exists a neighborhood $U$ of $x$ and a rescaling $\phi_U$, (that is, a continuous increasing function $\phi_U: I \to \R$) such that $\phi_U \circ \tau$ is Lipschitz in $U$.
	\end{lemma}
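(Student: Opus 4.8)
The plan is to reduce the statement, which is intrinsically about the smooth structure near $x$, to a one–variable problem about the level sets of $\tau$. First I would fix coordinates $(t,\vec y)$ centered at $x$ and shrink to a compact box $\bar U$ so small that continuity of $g$ traps the $g$-cones between two flat cones: there are constants $0<a<b$ with
\[
\{v^0 > b|\vec v|\}\ \subset\ (\text{future }g\text{-cone})\ \subset\ \{v^0 > a|\vec v|\}\qquad\text{at every point of }\bar U .
\]
Taking $h$ to be the Euclidean metric in these coordinates, this sandwiching yields $t_q-t_p\le d_h(p,q)\le \sqrt{1+a^{-2}}\,(t_q-t_p)$ for every $g$-causal $p\le q$, so the coordinate-time gap is comparable to the $h$-distance. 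Combined with the first part of Remark~\ref{remark:lipschitz_uptorescaling} (a time function is locally Lipschitz iff the second inequality of \eqref{eq:weak_temporal} holds on causal pairs), the whole statement reduces to finding a continuous strictly increasing $\phi_U$ and a constant $L$ with
\[
\phi_U(\tau(q))-\phi_U(\tau(p))\ \le\ L\,(t_q-t_p)\qquad\text{for all }g\text{-causal }p\le q\text{ in }U .
\]

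The key object is a \emph{slope modulus}. Writing $[m,M]=\tau(\bar U)$, for $r\in[m,M)$ and $\eta>0$ with $r+\eta\le M$ I set
\[
\Delta(r,\eta):=\inf\{\,t_q-t_p:\ p,q\in\bar U,\ p\le q,\ \tau(p)=r,\ \tau(q)=r+\eta\,\},
\]
the least coordinate-time needed to raise $\tau$ from level $r$ to level $r+\eta$ along a future causal curve, taken over all positions in the box. Compactness of $\bar U$, continuity of $\tau$ and closedness of $\le$ show the infimum is attained whenever the competing set is nonempty, and is strictly positive since a nontrivial $g$-causal pair has $t_q>t_p$. An intermediate-value (level-crossing) argument gives super-additivity: for $r<r'<r''$ any causal curve from level $r$ to level $r''$ meets level $r'$, whence $\Delta(r,r''-r)\ge \Delta(r,r'-r)+\Delta(r',r''-r')$.

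The remaining step is to manufacture $\phi_U$ with increments \emph{dominated} by the modulus, $\phi_U(r+\eta)-\phi_U(r)\le \Delta(r,\eta)$ for all admissible $r,\eta$; granting this, for any causal $p\le q$ one gets $\phi_U(\tau(q))-\phi_U(\tau(p))\le \Delta(\tau(p),\tau(q)-\tau(p))\le t_q-t_p\le d_h(p,q)$, closing the argument with $L=1$. Super-additivity is precisely what makes the partition sums $\sum_i \Delta(r_{i-1},r_i-r_{i-1})$ monotone under refinement, so a dominated $\phi_U$ is naturally produced as a refined-partition limit, a coarea-type primitive of $\Delta$.

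The hard part, and the step I expect to be the genuine obstacle, is reconciling \emph{domination} with \emph{strict monotonicity} of $\phi_U$: domination alone is trivial, but strict increase forces $\Delta(r,\eta)$ to admit a strictly positive lower primitive, and the coarea limit above can degenerate to a constant exactly when $\tau$ exhibits Hölder-type behaviour spread over a whole interval of levels. The decisive input must therefore be quantitative control of $\Delta$ from below: one should exploit that $\Delta(\cdot,\eta)>0$ for each \emph{fixed} $\eta>0$ together with compactness of $\bar U$ to extract a strictly positive measurable lower density, and then assemble $\phi_U$ by a diagonal argument over the scales $\eta=1/n$, shrinking $U$ as needed so that the modulus does not collapse. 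Verifying that this yields an honest rescaling—continuous and \emph{strictly} increasing rather than merely non-decreasing—is where I anticipate the real technical weight of the appendix to lie.
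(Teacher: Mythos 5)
You have correctly isolated where the content of the lemma lives, but the proposal stops exactly there, so it is not a proof. The preliminary reductions are fine (modulo a fixable point: attainment of the infimum and the level-crossing superadditivity both require the intermediate point to stay in $\bar U$, so $U$ must be taken causally convex in the chart). The decisive step is the one you defer: producing a \emph{continuous, strictly increasing} $\phi_U$ with increments dominated by $\Delta$. By superadditivity, refining a partition decreases the sums $\sum_i \Delta(r_{i-1},r_i-r_{i-1})$, so any dominated nondecreasing $\phi_U$ is bounded above by the partition-infimum primitive $\Phi(r,r'):=\inf\sum_i\Delta(r_{i-1},r_i-r_{i-1})$; conversely, any rescaling making $\phi_U\circ\tau$ Lipschitz on causal pairs forces $\phi_U(r')-\phi_U(r)\le L\,\Phi(r,r')$. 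Hence the lemma is \emph{equivalent} to the strict positivity of $\Phi$ on some shrunken box around $x$, and your proposal simply restates this and labels it ``the real technical weight.'' Worse, the inputs you propose cannot deliver it: knowing $\delta(\eta):=\inf_r\Delta(r,\eta)>0$ for each fixed $\eta$ (which does follow from compactness and lower semicontinuity) is compatible with $\Phi(r,r')=0$ on a \emph{fixed} box, because the minimizing causal pairs may migrate in position from one sub-level to the next. Concretely, writing the level sets in the chart as Lipschitz graphs $t=u_c(y)$, one checks $\Delta$ is comparable to $\min_y\bigl(u_{c_2}(y)-u_{c_1}(y)\bigr)$, so $\Phi$ is comparable to the total mass of the measure-theoretic infimum over positions of the ``thickness measures'' $\mu_y\bigl((c_1,c_2]\bigr)=u_{c_2}(y)-u_{c_1}(y)$; if two positions in the box carry mutually singular thickness measures (say one strictly increasing singular-continuous profile and one absolutely continuous, interpolated with small spatial slope so that the resulting $\tau$ is an honest time function), this infimum vanishes while every $\Delta(\cdot,\eta)$ stays positive. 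So the collapse you fear is a real phenomenon on fixed boxes, and can only be excluded by genuinely exploiting the freedom to shrink $U$ around $x$ --- which your sketch (``diagonal argument over the scales $\eta=1/n$'') never actually does.

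The paper's proof takes a different and much more direct route, which sidesteps the modulus entirely: it chooses a globally hyperbolic coordinate neighborhood with Cauchy temporal coordinate $t$, uses that the levels $S_c=\tau^{-1}(c)$ are achronal, edgeless, locally Lipschitz hypersurfaces (transverse to $\partial M$, which is why the boundary is harmless), and observes that the integral curve $\gamma$ of $\partial_t$ through the given point is timelike, hence crosses each $S_c$ at most once; the rescaling is then \emph{defined} by the crossing parameter, $\phi(\tau(\gamma(t)))=t$, so that continuity and strict monotonicity of $\phi$ are immediate from $\tau\circ\gamma$ being a homeomorphism onto its image, and only the Lipschitz estimate against a product-type distance remains to be checked. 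In other words, the paper manufactures the rescaling from a single distinguished timelike curve rather than from an optimization over all causal pairs; your variational object $\Phi$ is the \emph{maximal} admissible rescaling, but proving it nondegenerate is strictly harder than exhibiting one explicit nondegenerate rescaling, and that existence statement is precisely what your proposal leaves open.
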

	
	\begin{proof} 
        The levels of the time function $S_c = \tau^{-1} (c)$ are achronal and edgeless, and (by \cite[Prop. 3.12 and Cor. 3.13]{zepp:structure}) they are closed (in $\bM$) locally Lipschitz hypersurfaces transverse to $\partial M$.  
				Let $p \in \bM$ be a point and assume w.l.o.g that $\tau (p) = 0$. Choose a globally hyperbolic coordinate neighborhood $(V, (x^j))$ with $x^0 = t$ as Cauchy temporal function. Consider the integral curve $\gamma : I \to V$ of $\partial_t$ with $\gamma (0) = x$. Since $\gamma$ is timelike, it is transverse to all hypersurfaces $S_c = \tau^{-1} (c)$ in $V$, and it crosses them exactly once due to achronality. Consider the rescaling that will assign to $S_c$ the constant value of the parameter $t$ for which $\gamma (t) \in S_c$; more precisely, let $\phi : I_\tau \rightarrow \R$ be the function  given by 
		\begin{equation*}
			\phi (\tau (\gamma(t))) = t,
		\end{equation*}
		where $I_\tau$ is the interval $\tau (\gamma (I))$. Notice that
		so that for every point $p \in \tau^{-1} (c)$ we get $t(p) =$ constant.
		
		Next, we prove that $\phi \circ \tau$ is Lipschitz in $V$. Let $\pi : V \to t^{-1} (0)$ that sends each $p \in V$ to its projection on $t^{-1} (0)$ via the flow of $\partial_0$, and denote by $d_0$ the distance associated to the induced metric on $t^{-1} (0)$. Define a distance $d : V \times V \to \mathbb{R}$ as
		\begin{equation*}
			d (p,q) = \sqrt{(t(p)-t(q))^2 + d_0 (\pi (p), \pi(q))^2},
		\end{equation*}
		(notice that $d$ is positive definite so is $d_0$ on $t^{-1} (0)$, so that $d$ becomes a product metric). In order to chec the Lipschitz character of $\phi \circ \tau$ just notice that, for any $p,q$ in $V$,
		\begin{equation*}
			|\phi \circ \tau (q) - \phi \circ \tau (p)| = |t(q) - t(p)| \leq d (p,q).
		\end{equation*}
		
	\end{proof}
	
	Next we will use Lemma \ref{lemma:time_lipschitz} to obtain a global rescaling $\phi$ such that $\phi \circ \tau$ is locally Lipschitz. Notice first that perhaps there is no timelike curve such that Im$(\tau \circ \gamma) = \R$, that is, no $\gamma$  is transverse to all level sets of $\tau$. However, this will not be an obstacle since the local rescaling $\phi_U$ obtained for a neighborhood of $p$ works for a full open slab $\tau^{-1}((t_0-\epsilon_0, t_0 + \epsilon_0))$, where $\tau_0 = \tau (p)$ and $\epsilon_0$ depends on $\tau_0$ (in addition, this stresses the irrelevance of the boundary for the proof).
	
	\begin{figure}[h]
		\centering \includegraphics[scale=0.6]{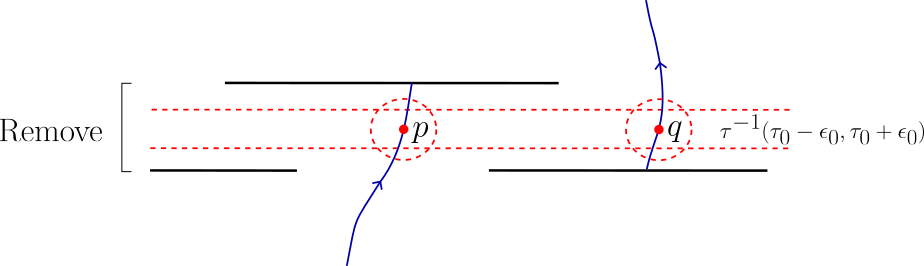}
		\caption{The rescaling obtained for $U \ni p$ works for a full slab $\tau^{-1}(t_0 - \epsilon_0 , t_0 + \epsilon_0)$.}
	\end{figure}
	
	\begin{theorem}[Global rescaling]
		Any time function  $\tau: \bar M  \rightarrow I\subset \R $ ($I=\tau \left( \bM \right)$ interval) on a spacetime-with-timelike-boundary $(\bM, g)$ is locally Lipschitz up to rescaling, i.e., there exists a continuous strictly increasing function $\phi : I \to \R$ such that $\phi \circ \tau$ is locally Lipschitz.
	\end{theorem}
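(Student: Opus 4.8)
The plan is to reduce the statement to a one-dimensional patching problem on the interval $I=\tau(\bM)$, using Lemma~\ref{lemma:time_lipschitz} as the local input. As observed just before the statement, the local rescaling produced for a point $p$ with $\tau(p)=t_0$ depends only on the value of $\tau$, and hence works on the whole slab $\tau^{-1}((t_0-\epsilon_0,t_0+\epsilon_0))$. First I would therefore cover $I$ by the corresponding open subintervals and, since $I\subset\R$ is second countable, extract a countable, locally finite subcover. Because $I$ is an interval, these can be re-enumerated as a chain $\{J_k\}$ with $J_k=(a_k,b_k)$, consecutive members overlapping and $\bigcup_k J_k=I$; on each slab $\tau^{-1}(J_k)$ we have a continuous strictly increasing $\phi_k:J_k\to\R$ with $\phi_k\circ\tau$ locally Lipschitz.

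Next I would glue the $\phi_k$ into a single increasing $\phi$ by an inductive affine matching along the chain. Fix a base interval $J_0$ and set $\phi:=\phi_0$ there. Passing to a neighbour $J_{k+1}$ and choosing a point $c$ in the overlap $J_k\cap J_{k+1}$, define $\phi(t):=\phi(c)+\bigl(\phi_{k+1}(t)-\phi_{k+1}(c)\bigr)$ for $t$ on the new part of $J_{k+1}$, and proceed symmetrically to the left of $J_0$. By construction $\phi$ is continuous and strictly increasing (each piece is a strictly increasing affine function of some $\phi_k$, matched continuously at the junctions), and it is defined on all of $I$ because the chain exhausts $I$. Crucially, on the region where $\phi$ was built from $\phi_k$ the composition $\phi\circ\tau$ is an affine increasing function of $\phi_k\circ\tau$, hence locally Lipschitz on that portion of $\tau^{-1}(J_k)$ directly from Lemma~\ref{lemma:time_lipschitz}; note that this uses only the chart from which the corresponding piece was built, so no control over the transition maps $\phi_{k+1}\circ\phi_k^{-1}$ is needed.

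The one delicate point, and the step I expect to be the main obstacle, is local Lipschitzness of $\phi\circ\tau$ across the junction values $t=c$, where $\phi$ switches from an affine function of $\phi_k$ to one of $\phi_{k+1}$. Near such a $c$ the map $\phi\circ\tau$ is continuous and is separately locally Lipschitz on the two closed pieces $\{\tau\le c\}$ and $\{\tau\ge c\}$ (via the $\phi_k$- and $\phi_{k+1}$-charts respectively), with a common value on the interface $\tau^{-1}(c)$. Since the auxiliary distance used in Lemma~\ref{lemma:time_lipschitz} is a length (product) metric, any two nearby points can be joined by a path realizing the distance up to a constant, and such a path crosses $\tau^{-1}(c)$ whenever the two points lie on opposite sides; splitting at the crossing point and applying the two one-sided estimates shows that $\phi\circ\tau$ is Lipschitz on a full neighbourhood, with constant the maximum of the two one-sided constants. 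Local finiteness of $\{J_k\}$ guarantees that only finitely many junctions accumulate near any value, so $\phi\circ\tau$ is locally Lipschitz on all of $\bM$, while $\phi:I\to\R$ is continuous and strictly increasing, as required.
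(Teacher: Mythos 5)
Your proposal is correct and follows essentially the same route as the paper's proof: local rescalings valid on full slabs (Lemma~\ref{lemma:time_lipschitz}), a countable chain of overlapping intervals covering $I$, and gluing the $\phi_k$ by additive constants at the junction values to get a continuous strictly increasing $\phi$ on all of $I$. The only difference is that you explicitly verify local Lipschitzness of $\phi\circ\tau$ across the junction hypersurfaces $\tau^{-1}(c)$ via the length-metric crossing argument (the one-sided estimates extend to $\{\tau\le c\}$ and $\{\tau\ge c\}$ by continuity, and splitting an almost-minimizing path at its crossing of $\tau^{-1}(c)$ gives the two-sided bound with constant the maximum of the one-sided ones), a point the paper's proof leaves implicit; your treatment of it is correct.
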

	
	\begin{proof}
        For each $t_0 \in I$ there exist $\epsilon_0 = \epsilon_0 (t_0)$ and a rescaling $\phi_0$ such that $\tau \circ \phi_0$ is locally Lipschitz around a point and, then,  on the whole slab $\tau^{-1} ((\tau_0 - \epsilon_0, \tau_0 + \epsilon_0))$ as explained above.\footnote{As $(\bM,g)$ admits a time function, it admits a globally defined temporal function which can be used to rescale. Anyway, this rescaling must be done with different curves and in the slab corresponding to each one.}
		Take an open covering of $I$ by open intervals of the form $I_k := (t_k - \epsilon_k, t_k + \epsilon_k)$ for $k \in \Z$ such that 
		\begin{equation*}
			I_k \cap I_{k+1} \neq \emptyset \quad \text{and} \quad I_k \cap I_{k+2} = \emptyset
		\end{equation*}
 		(this can be achieved first covering $I$ by  compact subintervals $J_m, m\in \Z$  whose interiors do not intersect, covering each $J_m$ by slabs as above,  using a Lebesgue number $\delta_m$ for the covering of each $J_m$ and taking smaller slabs around each extreme of $J_m$ if necessary).
		On each $I_k$ we have a rescaling $\phi_k$. To construct a global rescaling $\phi : I  \to \R$ we just need to glue together the $\phi_k$ in a continuous manner. We can assume that $\phi_k$ is defined on the closed interval $[t_k - \epsilon_k, t_k + \epsilon_k]$. 
        For $t_0$ and $[\tau_0- \epsilon_0, \tau_0 + \epsilon_0]$ define $\phi|_{[t_0 - \epsilon_0, t_{1} + \epsilon_1]}: [t_0 - \epsilon_0, t_{1} + \epsilon_1] \to \R$ as follows
		\begin{equation*}
			  \phi|_{[t_{0} - \epsilon_{0}, t_{1} + \epsilon_1]} (s) :=  
			\begin{cases}
				\phi_0 (s) & \text{if  } s \in [t_0-\epsilon_{0}, t_0 + \epsilon_0]	\\[2pt]
				\tilde{\phi}_{1} (s) & \text{if  } s \in (t_0 + \epsilon_{0}, t_{1} + \epsilon_{1}],
			\end{cases}
		\end{equation*}
		where $\tilde{\phi}_{1} (s) = \phi_{1} (s) - \left(\phi_{1} (t_0 + \epsilon_0) -\phi_0 (t_0 + \epsilon_0)\right)$. By repeating this process inductively, redefine $\phi_{k+1}$ on $(t_{k} + \epsilon_k, t_{k+1} + \epsilon_{k+1}]$ as
		\begin{equation*}
			\tilde{\phi}_{k+1} (s) := \phi_{k+1} (s) - \left(\phi_{k+1} (t_{k} + \epsilon_k) - \tilde{\phi}_k (t_{k} + \epsilon_k)\right),
		\end{equation*}
		for $k \geq 1$. Similarly, for $k \leq -1$ define 
		\begin{equation*}
			\tilde{\phi}_{k-1} (s) := \phi_{k-1}(s) - \left(\phi_{k-1} (t_{k}-\epsilon_{k}) - \tilde{\phi}_{k} (t_k - \epsilon_{k})\right),
		\end{equation*} 
		where $\tilde{\phi}_{-1} (s) = \phi_{-1} (s) - \left(\phi_{-1} (t_0 - \epsilon_0) - \phi_{-1} (t_0 - \epsilon_0)\right)$.
		Define the desired rescaling $\phi : I \to \R$ as 
		$$\phi (s) = 
		\begin{cases}
			\tilde{\phi}_{-k} & s \in (t_{-k} - \epsilon_{-k}, t_{-k+1} - \epsilon_{-k+1}], \\[2pt]
			\phi_0 (s) & s \in [t_0-\epsilon_0, t_0+\epsilon_0], \\[2pt]
			\tilde{\phi}_{k} & s \in (t_{k} + \epsilon_{k}, t_{k+1} - \epsilon_{k+1}].
		\end{cases}$$
	\end{proof}

	\section{Independence of $h$-steep and steep  (Example \ref{ex:h-steep_no-steep})} \label{appendix:example2.2}
	An explicit construction of an $h$-steep non-steep 
temporal function as explained in the first part of Example \ref{ex:h-steep_no-steep} is provided.
    
	{\em Construction of $\tau$.} Consider the smoothed step function defined by
	$$s(t) = \left\lbrace\begin{array}{cc}
		\frac{exp(-1/t)}{exp(-1/t) + exp(-1/(1-t))}, & 0<t<1
		\\
		0 & t \leq 0, \\
		1 & t \geq 1; \end{array}\right.$$
	with $s'(t) > 0$ for any $t\in ]0,1[$. To get a step in any real interval $[a,b]$ it is enough to consider the composition $t \mapsto s\left(\frac{t-a}{b-a} \right)$, which is also increasing in $]a,b[$. 

	For each $k \in \mathbb{N}$, denote $l_k(t) = \frac{t}{k^2}+k^2-1$ and define $\tau : \LL^2 \rightarrow \R$ by 
	\begin{equation*} 
		\tau(t,x) = \varphi(t) := \begin{cases} 
			t+\left(l_k(t)-t \right) s \left( \frac{t-(k^2-1/k^2)}{1/k^2} \right)  & \text{if } t \in [k^2-1/k^2,k^2), \\
			l_k(t)+\left(t-l_k (t)\right) s \left( \frac{t-k^2)}{1/k^2} \right)  & \text{if } t \in [k^2,k^2+1/k^2], \\
			t & \text{otherwise}.
		\end{cases}
	\end{equation*}
	{\em $\tau$ is temporal but not steep.} Note $\frac{\partial \tau}{\partial t} (t,x)= \varphi' (t)$ with  
	\begin{equation*} 
		\varphi'(t) = \begin{cases} 
			1+\left(\frac{1}{k^2}-1 \right) s \left( \frac{t-(k^2-1/k^2)}{1/k^2} \right) + (l_k(t) - t) s' \left( \frac{t-(k^2-1/k^2)}{1/k^2} \right) k^2 & \text{if } t \in [k^2-1/k^2,k^2), \\
			\frac{1}{k^2}+\left(1-\frac{1}{k^2} \right) s \left( \frac{t-k^2}{1/k^2} \right) + (t-l_k (t)) s' \left( \frac{t-k^2}{1/k^2} \right) k^2 & \text{if } t \in [k^2,k^2+1/k^2], \\
			1 & \text{otherwise}.
		\end{cases}
	\end{equation*}
	A direct computation gives us that $0< 1/k^2 \leq \varphi ' (t) \leq 3$ for every $t \in [k^2-1/k^2, k^2 + 1/k^2]$, thus $\nabla \varphi = (-\varphi' (t), 0)$ and $\tau$ is temporal.

As $\frac{\partial \tau}{\partial t}|_{t = k^2} = 1/k^2$, the sequence $\{p_k = (k^2,0): k \in \mathbb{N}\}$ satisfies
\begin{equation*}
	g_{p_k} (\nabla \tau _{p_k}, \nabla \tau _{p_k}) = -1/k^4,
\end{equation*}
forbidding the steepness of $\tau$.

{\em $\tau$ is $h$-steep with respect to a complete Riemannian metric $h$ on $\mathbb{R}^2$.} Consider the metric $h = \frac{1}{2} \varphi' (t)^2 h_0$, where  $h_0 = dt^2 + dx^2$. Since $0 < 1/k^2 \leq \varphi ' \leq 3$ in each interval $[k^2-1/k^2,k^2+1/k^2]$, and any two of these intervals are separated by an interval of length greater than $4$, we get that $\frac{1}{2}\varphi ' (t)^2 h_0$ is complete. Let $v$ be a future-directed $g$-causal vector, that is,
\begin{equation*}
	-dt^2 (v) + dx^2 (v) \leq 0.
\end{equation*}
Since $d\tau = \varphi' (t) dt$ we obtain
\begin{equation*}
	\frac{d\tau ^2 (v)}{(\varphi ' (t)) ^2} = dt^2 (v) \geq dx^2 (v),\qquad 
	2\frac{d\tau ^2 (v)}{(\varphi ' (t)) ^2} = 2dt^2 (v) \geq h_0(v,v),\qquad 
	d\tau ^2 (v) \geq h (v,v),
\end{equation*}
as required.

\section{Construction of steep, $h$-steep, adapted functions 
} \label{appendix:thm_hsteep}

In this appendix we explain how to adapt the arguments in the proof of \cite[Theorem 1.2]{muller-sanchez}  and \cite[\S 5.2]{zepp:structure}  to get a proof of Theorem \ref{thm:GHnulldistance-conborde} (iii), namely, that any globally hyperbolic spacetime-with-timelike-boundary  $(\bM,g)$ admits an {\em adapted} Cauchy temporal function $\tau$ which is both {\em steep} and {\em $h$-steep} for any prescribed (complete) Riemannian metric $h$. The argument follows the main line of reasoning in \cite{muller-sanchez}, with suitable modifications to also guarantee $h$-steepness.

In the first subsection we construct a Cauchy temporal function $\tau$ on $\bM$ that is both steep and $h$-steep for any  Riemannian metric $h$, postponing the proof that it can be chosen with gradient tangent to the boundary in the second subsection.

\subsection{Construction of the temporal function} \label{subapp:construction}



In this subsection, we  first  present the notions and technical results that are relevant for our argument  but are absent from the original proof in \cite[Section 4]{muller-sanchez}, have been substantially modified, or require a somewhat different proof.  These ingredients—adapted from \cite[Section 4]{muller-sanchez} so as to ensure $h$-steepness at each step -- will be incorporated into the proof at the end of the subsection. We deliberately omit statements whose extension to the boundary case is straightforward -- such as \cite[Lemma 4.1]{muller-sanchez} -- and focus instead on the subtler points and genuine differences introduced by our specific  problem.


%

Note first that, if $\tau: \bM \to \mathbb{R}$ is a temporal function then $\nabla \tau$ is past-pointing timelike. So, in our setting, \cite[Lemma 4.10]{sormanivega} yields the following statement:
\begin{lemma}\label{remark:h-steepcompactos}
	  For any Riemannian metric $h$ and any compact subset $K \subset \bM$, there exists a constant $c > 0$ such that the function $c \tau$ is $h$-steep on $K$.
\end{lemma}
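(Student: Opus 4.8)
The plan is to unravel the definition of $h$-steepness and reduce the desired inequality to a compactness statement. Recall that ``$c\tau$ is $h$-steep on $K$'' means precisely that $c\, d\tau(v) \ge \|v\|_h$ for every future-directed causal vector $v$ based at a point of $K$. Both sides of this inequality are positively homogeneous of degree one in $v$, so it suffices to verify it on the set
\[
\mathcal{S}_K := \{\, v \in T\bM : \pi(v) \in K,\ \|v\|_h = 1,\ v \text{ future-directed causal}\,\},
\]
where $\pi$ denotes the bundle projection. Indeed, given an arbitrary future causal $w$ based in $K$ (necessarily $w \neq 0$), one writes $w = \|w\|_h\, v$ with $v = w/\|w\|_h \in \mathcal{S}_K$, and a lower bound $d\tau(v) \ge 1/c$ on $\mathcal{S}_K$ yields $c\, d\tau(w) = c\|w\|_h\, d\tau(v) \ge \|w\|_h$, as required.

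The crucial point is that $\mathcal{S}_K$ is compact: it is a closed subset of the $h$-unit sphere bundle over $K$ (cut out by the closed conditions $g(v,v) \le 0$ together with future-pointing), and that sphere bundle is compact because $K$ is. The map $v \mapsto d\tau(v) = g(\nabla\tau, v)$ is continuous on $T\bM$, hence it attains a minimum $m$ on $\mathcal{S}_K$. Since $\tau$ is temporal, $\nabla\tau$ is past-directed timelike, so $-\nabla\tau$ is future-directed timelike; because the $g$-inner product of a future timelike vector with a nonzero future causal vector is strictly negative, we get $d\tau(v) = g(\nabla\tau, v) > 0$ for every $v \in \mathcal{S}_K$, and in particular $m > 0$. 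Setting $c := 1/m$ then delivers the claim.

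The only genuinely substantive step is the strict positivity of $d\tau$ on $\mathcal{S}_K$, which is exactly where the temporal hypothesis on $\tau$ enters (matching the use of \cite[Lemma 4.10]{sormanivega}); the vectors in $\mathcal{S}_K$ are nonzero by definition, so the sign is strict and the minimum cannot be zero. I expect no real obstacle beyond this: if $\mathcal{S}_K$ were empty the statement would be vacuous, but each point of a spacetime carries future causal vectors, so $\mathcal{S}_K \neq \emptyset$; and the presence of $\partial M$ causes no trouble, since at a boundary point the tangent space is still the full vector space and the sphere bundle over the compact set $K$ remains compact.
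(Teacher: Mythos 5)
Your proof is correct, and it is essentially the paper's argument made self-contained: the paper disposes of this lemma by noting that $\nabla\tau$ is past-pointing timelike and then citing \cite[Lemma 4.10]{sormanivega}, whose content is precisely your homogeneity-plus-compactness reasoning (positive minimum of $d\tau$ on the compact set of $h$-unit future causal vectors over $K$, then rescale by $c=1/m$). Your explicit checks of the strict positivity via the past-timelike gradient and of the harmlessness of $\partial M$ are exactly the points the citation is meant to cover.
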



%
%

A property that will be extensively used in our approach is that, being $h$-steep is preserved under the sum of functions:
\begin{lemma}\label{lemma:suma-es-hsteep}
	Let $\tau_1$ and $\tau_2$ be temporal functions with $\tau_1$ being $h$-steep. Then, $\tau_1 + \tau_2$ is an $h$-steep temporal function.
\end{lemma}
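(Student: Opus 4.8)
The plan is to reduce everything to the pointwise $h$-steep inequality and then recover temporality essentially for free. First I would note that $\tau_1 + \tau_2$ is smooth (so the $C^1$ hypothesis in Definition \ref{def:steeptimefunctions} is met) and that differentials are additive, so $d(\tau_1+\tau_2) = d\tau_1 + d\tau_2$. Consequently, for any future-directed causal vector $v \in TM$ we have $d(\tau_1+\tau_2)(v) = d\tau_1(v) + d\tau_2(v)$, and the whole argument comes down to bounding the two summands from below.

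The first summand is controlled directly by hypothesis: since $\tau_1$ is $h$-steep, $d\tau_1(v) \geq \|v\|_h$ for every future causal $v$. For the second summand I would use that $\tau_2$ is temporal, so its gradient $\nabla \tau_2$ is past-directed timelike. Writing $d\tau_2(v) = g(\nabla \tau_2, v)$ and applying the standard Lorentzian fact that the $g$-inner product of a future-directed causal vector with a past-directed causal one is non-negative (equivalently, $-\nabla\tau_2$ is future timelike and the inner product of two future-directed causal vectors is $\leq 0$), I obtain $d\tau_2(v) \geq 0$. Combining the two bounds gives $d(\tau_1+\tau_2)(v) \geq \|v\|_h + 0 = \|v\|_h$, which is precisely the $h$-steep inequality \eqref{eq:h-steep} for $\tau_1 + \tau_2$.

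It then remains only to conclude that $\tau_1 + \tau_2$ is temporal. Rather than verifying directly that $\nabla(\tau_1 + \tau_2) = \nabla\tau_1 + \nabla\tau_2$ is timelike, I would simply appeal to Proposition \ref{p_hsteep_is_time}(2): any $h$-steep ($C^1$) function is automatically temporal, and we have just established that $\tau_1 + \tau_2$ is $h$-steep. For completeness one may alternatively note that $\nabla\tau_1$ and $\nabla\tau_2$ are both past-directed timelike, and that the past timelike cone is convex, so their sum is again past-directed timelike, giving temporality directly.

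I do not anticipate any genuine obstacle in this lemma; the only point requiring care is the sign/cone bookkeeping yielding $d\tau_2(v) \geq 0$, which rests on the orientation conventions fixed in Definition \ref{def:generalizedtimefunction} (the gradient of a temporal function being past-directed) together with the reverse Cauchy--Schwarz inequality for causal vectors lying in opposite time cones.
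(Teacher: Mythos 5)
Your proposal is correct and takes essentially the same route as the paper's proof: both decompose $d(\tau_1+\tau_2)(v)=d\tau_1(v)+d\tau_2(v)$ for a future-directed causal $v$, bound the first term below by $\|v\|_h$ using the $h$-steepness of $\tau_1$, and use the sign of $g(\nabla\tau_2,v)$ (positive, since $\nabla\tau_2$ is past-directed timelike) to dispose of the second term. The only cosmetic difference is that the paper dismisses temporality of the sum as trivial (convexity of the past timelike cone, which you also note as an alternative), whereas you additionally route it through Proposition \ref{p_hsteep_is_time}(2); both are valid and non-circular.
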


\begin{proof}
	Trivially the sum of temporal functions is itself temporal. For any future-directed causal $V \in T \bM$ and, without loss of generality, assume that $\tau_1$ is $h$-steep. Then, since $g (\nabla \tau_2, v) > 0$, we have
	\begin{equation*}
		g(\nabla (\tau_1 + \tau_2), V) = g (\nabla \tau_1 , V) + g(\nabla \tau_2, V) > g (\nabla \tau_1 , V)\geq \| V \|_h. 
	\end{equation*}
\end{proof}

The following technical result is also required for the argument.
\begin{lemma} \label{lemma:constant-hsteep}
	Let $h$ be a Riemannian metric and $\tau$ a function such that $g(\nabla \tau, \nabla \tau ) < 0$ and $d\tau (v) \geq \|v\|_h$ for every future causal vector $v$ in a subset $U$. Given a compact set $K \subset U$, for any function $f$ there exists a constant $c$ such that $d (f+c\tau) (v) \geq \| v \|_h$ for all future-directed causal vectors $v$ at every point in $K$. 
    Such a constant $c$  will  be always chosen  bigger if necessary so that \cite[Lemma 4.1]{muller-sanchez} holds too.

\end{lemma}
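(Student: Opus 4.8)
The plan is to estimate $d(f+c\tau)(v)$ directly by splitting it as $df(v)+c\,d\tau(v)$ and absorbing the only possibly-negative term, $df(v)$, into the positive term $c\,d\tau(v)$ once $c$ is taken large enough. The hypothesis that $\tau$ is $h$-steep on $U$ — that is, $d\tau(v)\ge\|v\|_h$ for every future causal $v$ over $U\supseteq K$ — supplies the coercive lower bound we need, while the condition $g(\nabla\tau,\nabla\tau)<0$ guarantees that $\tau$ is temporal, so that the sum remains a legitimate temporal function in the surrounding construction.

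First I would exploit the compactness of $K$. Since the $1$-form $df$ and the Riemannian metric $h$ are continuous, the $h$-operator norm $p\mapsto\|df_p\|_h$ attains a finite maximum $C_f:=\max_{p\in K}\|df_p\|_h$ on $K$. By the Cauchy--Schwarz inequality for the (positive definite) metric $h$, this yields $|df_p(v)|\le\|df_p\|_h\,\|v\|_h\le C_f\|v\|_h$ for every tangent vector $v$ at a point $p\in K$.

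Next, for any future-directed causal vector $v$ at a point $p\in K$ I would combine the two estimates, using $d\tau(v)\ge\|v\|_h$ from $h$-steepness of $\tau$:
\begin{equation*}
d(f+c\tau)(v)=df(v)+c\,d\tau(v)\ \ge\ -C_f\|v\|_h+c\,\|v\|_h=(c-C_f)\|v\|_h.
\end{equation*}
Choosing any $c\ge C_f+1$ then forces $d(f+c\tau)(v)\ge\|v\|_h$ at every point of $K$, which is precisely the asserted $h$-steep inequality. The final clause of the statement costs nothing: since enlarging $c$ only increases the left-hand side above, one is free to replace $c$ by any larger value in order to meet the additional requirement of \cite[Lemma 4.1]{muller-sanchez} as well.

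I do not expect a genuine obstacle here; the single point that must not be overlooked is that the bound on $df$ has to be \emph{uniform} over $K$, which is exactly why compactness is assumed — over a noncompact $U$ the operator norm of $df$ could diverge and no single constant $c$ would suffice. Everything else reduces to the one-line Cauchy--Schwarz estimate above.
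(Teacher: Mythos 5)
Your proof is correct and follows essentially the same route as the paper's: both split $d(f+c\tau)(v)=df(v)+c\,d\tau(v)$, bound $d\tau(v)\geq\|v\|_h$, and use compactness of $K$ to choose $c$ uniformly large. You merely make explicit the step the paper compresses into ``since $K$ is compact, $c$ can be chosen large'' — namely the uniform Cauchy--Schwarz bound $|df(v)|\leq C_f\|v\|_h$ with $C_f=\max_{p\in K}\|df_p\|_h$ — and you correctly note that the inequality is monotone in $c$, so the final enlargement clause is harmless.
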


\begin{proof}
	Notice that, at each $x\in K$, 
	$$d (f+c\tau) (v)=g (\nabla (f + c\tau) , v) = g (\nabla f, v) + c g (\nabla \tau, v) = df(v) + cd\tau (v) \geq df(v) + c\|v\|_h.$$
	{Since $K$ is compact,} the constant $c>0$ can be chosen large  so that  $ df(v) + c\|v\|_h\geq\| v \|_h.$ 
\end{proof}

%

Now, one can prove this refined version of \cite[Proposition 4.2]{muller-sanchez}. 

\begin{proposition}\label{prop1:hsteep}
	Let $S$ be a Cauchy hypersurface and $p \in J^{-} (S)$. For every neighborhood $V$ of $J (p,S)$, there exists a smooth function $\tau \geq 0$ such that:
	\begin{enumerate}[label=(\roman*)]
		\item Supp$(\tau) \subset V$,
		\item $\tau > 1$ on $S \cap J^{+} (p)$,
		\item $\nabla \tau$ is timelike and past-pointing in Int(Supp$(\tau) \cap J^{-} (S))$,
		\item $g(\nabla\tau,\nabla\tau)<-1$  on $J(p,S)$  
         \item $d\tau (v) \geq \| v\|_h$ for all  future-directed  causal $v \in T\bM$   on $J(p,S)$.
	\end{enumerate} 
\end{proposition}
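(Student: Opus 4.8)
The plan is to obtain conditions (i)--(iv) exactly as in the original construction of \cite[Proposition 4.2]{muller-sanchez}, and then to upgrade to the $h$-steepness (v) by a single rescaling, exploiting that $h$-steepness on a compact set is both a soft consequence of temporality (via Lemma \ref{remark:h-steepcompactos}) and stable under multiplying the function by a constant $\geq 1$. In this way the genuinely new requirement is imposed \emph{last}, without disturbing the four conditions coming from \cite{muller-sanchez}.

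First I would run the Müller--Sánchez argument essentially verbatim. Using a local temporal function around $p$ together with a bump function that equals $1$ on a neighborhood of the diamond $J(p,S)$ (which is compact by global hyperbolicity) and whose support sits inside $V$, one builds a smooth $\tau_0 \geq 0$ with $\operatorname{Supp}(\tau_0)\subset V$, with $\tau_0>1$ on $S\cap J^+(p)\subset J(p,S)$, with $\nabla\tau_0$ timelike and past-pointing on $\operatorname{Int}(\operatorname{Supp}(\tau_0)\cap J^-(S))$, and with $g(\nabla\tau_0,\nabla\tau_0)<-1$ on $J(p,S)$. This yields (i)--(iv) for $\tau_0$.

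Next I would add (v) without disturbing (i)--(iv). Since $J(p,S)$ is compact and $\tau_0$ is temporal there (indeed $g(\nabla\tau_0,\nabla\tau_0)<-1<0$ on $J(p,S)$ by (iv)), Lemma \ref{remark:h-steepcompactos} applied with $K=J(p,S)$ furnishes a constant $c>0$ such that $c\,\tau_0$ is $h$-steep on $J(p,S)$. Set $c':=\max\{c,1\}$ and define $\tau:=c'\tau_0$. Scaling by $c'\geq 1$ leaves $\operatorname{Supp}(\tau)=\operatorname{Supp}(\tau_0)\subset V$ (so (i) holds), keeps $\tau=c'\tau_0>1$ on $S\cap J^+(p)$ (so (ii) holds), preserves the timelike past-pointing character of $\nabla\tau=c'\nabla\tau_0$ (so (iii) holds), and strengthens (iv), since $g(\nabla\tau,\nabla\tau)=(c')^2\,g(\nabla\tau_0,\nabla\tau_0)<-(c')^2\leq -1$. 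Finally, for any future-directed causal $v$ over a point of $J(p,S)$ one has $d\tau(v)=(c'/c)\,\bigl(c\,d\tau_0(v)\bigr)\geq (c'/c)\|v\|_h\geq\|v\|_h$, which is (v); here I use only that $h$-steepness improves under multiplication by a constant $\geq c$.

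I expect the main obstacle to be entirely the part inherited from \cite{muller-sanchez}: arranging (i)--(iv), that is, the simultaneous control of the support inside $V$ and of the timelike past-pointing behaviour of $\nabla\tau_0$ on the whole relevant region, especially where the cutoff function has nonzero gradient. By contrast, the new ingredient (v) is genuinely easy given the machinery already set up: compactness of $J(p,S)$ reduces it to Lemma \ref{remark:h-steepcompactos}, and the key structural observation is that $h$-steepness is monotone under scaling up, so it can be imposed without interfering with the other four conditions. Lemmas \ref{lemma:suma-es-hsteep} and \ref{lemma:constant-hsteep} are not needed for this single bump, but they are precisely the tools that will propagate this local $h$-steepness through the subsequent assembly of the global temporal function.
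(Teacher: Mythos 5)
Your proof is correct, but it takes a genuinely different route from the paper's. The paper re-runs the construction of \cite[Proposition 4.2]{muller-sanchez} and intervenes \emph{inside} it: at every step where a constant was chosen there to guarantee steepness via \cite[Lemma 4.1]{muller-sanchez}, it enlarges that constant using Lemma \ref{lemma:constant-hsteep}, and then invokes Lemma \ref{lemma:suma-es-hsteep} so that $h$-steepness survives each summation, emerging with (v) already built in. You instead treat (i)--(iv) as a black box (legitimate, since they are exactly the conclusions of the cited proposition, extended to the boundary setting) and impose (v) by a single terminal rescaling $\tau = c'\tau_0$ with $c' = \max\{c,1\}$, the key structural observation being that all five conditions are invariant or improve under multiplication by a constant $\geq 1$; that monotonicity argument is sound as written. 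Two small points are worth recording. First, Lemma \ref{remark:h-steepcompactos} is stated for temporal functions on $\bM$, while your $\tau_0$ is temporal only near $J(p,S)$; but (iv) forces $\nabla\tau_0 \neq 0$ on the compact diamond, hence $\tau_0>0$ there (as $\tau_0\geq 0$), and (iii) together with continuity makes $\nabla\tau_0$ past-pointing timelike on $J(p,S)$, which is all that the compactness argument behind that lemma actually uses -- the paper itself applies the lemma in the same local fashion, e.g.\ in the proof of Lemma \ref{lemma4:hsteep}. Second, your shortcut works precisely because a single function is involved: in the subsequent assembly (Lemma \ref{lemma4:hsteep}) one must make a \emph{sum} $c_i h^{+}_{a,p_i',p_i} + h^{+}_{a}$ steep and $h$-steep on a region where the already-fixed summand $h^{+}_{a}$ may have spacelike gradient, and there an a-posteriori rescaling of the total is useless -- one must enlarge $c_i$ relative to the fixed function, which is exactly Lemma \ref{lemma:constant-hsteep}. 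So the paper's in-construction method is the one that propagates to the global construction, while your version buys modularity and brevity for this isolated proposition, as your closing remark correctly anticipates.
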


\begin{proof}
	The proof of this result follows  the argument in \cite[Proposition 4.2]{muller-sanchez}, where the properties {\em (i)---(iv)} are obtained. The relevant  difference is that, at each step where a constant $c$ was chosen therein to ensure steepness of the temporal function using \cite[Lemma 4.1]{muller-sanchez}, one must now consider a possibly larger constant, applying  Lemma 
    \ref{lemma:constant-hsteep} to guarantee that the function is also $h$-steep. Then, Lemma \ref{lemma:suma-es-hsteep} also ensures the $h$-steepness of the functions obtained at all the stages of the proof, yielding {\em (v)}.
\end{proof}

Following the line of reasoning in \cite{muller-sanchez}, we employ {\em fat cone coverings} of Cauchy hypersurfaces \cite[Def 4.3]{muller-sanchez} to extend the locally defined temporal functions from Proposition \ref{prop1:hsteep} into a global temporal function. For this purpose, the following special class of functions plays a role analogous to that of the {\em steep forward cone functions} in \cite[Definition 4.5]{muller-sanchez}.




\begin{definition}\label{def:hSFC}
	Let $p',p \in T_{a-1}^{a}, p' \ll p$. An {\em $h$-steep and steep forward cone (hSFC) function} for $(a,p',p)$ is a smooth function $h^{+}_{a,p',p} : \bM \to [0,\infty)$ which satisfies the following conditions:
	\begin{enumerate}[label=(\roman*)]
		\item Supp$(h^{+}_{a,p',p}) \subset J(p', S_{a+2})$;
		\item $h^{+}_{a,p',p} > 1$ on $S_{a+1} \cap J^{+} (p)$;
		\item If $x \in J^{-} (S_{a+1})$ and $h^{+}_{a,p',p} (x) \neq 0$ then $\nabla h^{+}_{a,p',p} (x)$ is timelike and past-pointing;
        \item $g(\nabla h^{+}_{a,p',p}, \nabla h^{+}_{a,p',p})<-1$ on $J(p,S_{a+1})$.
		\item for any future-directed causal $v \in T\bM$, $dh^{+}_{a,p',p} (v) \geq \| v\|_h$ on $J(p,S_{ a+1})$.
	\end{enumerate}
\end{definition}

\begin{proposition}\label{prop:hSFC}
	For every $(a,p',p)$, there exists an hSFC function.
\end{proposition}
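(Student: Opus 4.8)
The plan is to read off the statement almost directly from the refined Proposition \ref{prop1:hsteep}, which already produces a function carrying the timelike-gradient, steepness and $h$-steepness data on a causal diamond reaching a prescribed Cauchy hypersurface. The only genuinely new point to secure is the support condition (i) of Definition \ref{def:hSFC}: the auxiliary point $p'$ (with $p' \ll p$) and the farther level $S_{a+2}$ are introduced precisely to create the room needed to place the support inside $J(p', S_{a+2})$, while every analytic property is inherited for free.

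First I would apply Proposition \ref{prop1:hsteep} with the Cauchy hypersurface $S := S_{a+1}$ and the point $p \in J^-(S_{a+1})$. For any prescribed neighborhood $V$ of the diamond $J(p, S_{a+1})$ (which is compact by global hyperbolicity) this yields a smooth $\tau \geq 0$ with $\mathrm{Supp}(\tau) \subset V$, with $\tau > 1$ on $S_{a+1} \cap J^+(p)$, with $\nabla\tau$ timelike and past-pointing on $\mathrm{Int}(\mathrm{Supp}(\tau) \cap J^-(S_{a+1}))$, and with $g(\nabla\tau,\nabla\tau) < -1$ together with $d\tau(v) \geq \|v\|_h$ for every future-directed causal $v$ on $J(p, S_{a+1})$. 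Setting $h^{+}_{a,p',p} := \tau$, conditions (ii), (iv) and (v) of Definition \ref{def:hSFC} are literally the conclusions of Proposition \ref{prop1:hsteep} for this choice of $S$, while (iii) follows since the nonvanishing set $\{h^{+}_{a,p',p} \neq 0\}$ lies in the interior of the support, matching the region controlled by the proposition up to the standard handling of points on $S_{a+1}$ as in \cite{muller-sanchez}.

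The key step is the choice of $V$ ensuring condition (i). Since $p' \ll p$, every $q \in J^+(p)$ satisfies $p' \ll p \leq q$, hence $q \in I^+(p')$, so $J^+(p) \subset I^+(p')$. Likewise, as the level $S_{a+1}$ lies strictly to the past of $S_{a+2}$, any $x$ with background time $\leq a+1$ admits a future-directed timelike curve crossing the Cauchy surface $S_{a+2}$, whence $J^-(S_{a+1}) \subset I^-(S_{a+2})$. Intersecting, the compact diamond $J(p, S_{a+1})$ is contained in the open set $I^+(p') \cap I^-(S_{a+2}) \subset J(p', S_{a+2})$. I would therefore take $V$ to be an open neighborhood of $J(p, S_{a+1})$ with $\overline{V}$ compact and $\overline{V} \subset I^+(p') \cap I^-(S_{a+2})$, which exists exactly because the compact diamond sits inside this open set; then $\mathrm{Supp}(h^{+}_{a,p',p}) \subset V \subset J(p', S_{a+2})$, giving (i).

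The main (and essentially only) obstacle is this support placement: one must exploit $p' \ll p$ and the gap between $S_{a+1}$ and $S_{a+2}$ to push $J(p, S_{a+1})$ into the interior of $J(p', S_{a+2})$, so that a support contained in a neighborhood of the diamond still fits inside the larger diamond. All the quantitative content—timelikeness of the gradient, the bound $g(\nabla h^{+}_{a,p',p},\nabla h^{+}_{a,p',p}) < -1$, and $h$-steepness—is supplied by Proposition \ref{prop1:hsteep}, whose proof already absorbs the repeated applications of Lemmas \ref{lemma:suma-es-hsteep} and \ref{lemma:constant-hsteep}, so no further estimates are needed here.
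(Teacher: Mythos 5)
Your proposal is correct and follows essentially the same route as the paper, whose entire proof is to apply Proposition \ref{prop1:hsteep} with $S = S_{a+1}$ and $V = I^{+}(p') \cap I^{-}(S_{a+2}) \subset J(p', S_{a+2})$. Your verification that $J(p,S_{a+1}) \subset I^{+}(p') \cap I^{-}(S_{a+2})$ is exactly the (implicit) justification for the paper's choice of $V$; the only cosmetic difference is that you shrink $V$ to a relatively compact neighborhood, which is unnecessary since the open set $I^{+}(p') \cap I^{-}(S_{a+2})$ already serves directly.
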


\begin{proof}
	Apply Proposition \ref{prop1:hsteep} to $S = S_{a+1}$ and $V = I^{+} (p') \cap I^{-} (S_{a+2}) \subset J(p',S_{a+2})$.
\end{proof}

The following lemma allows us to globalize the properties of an hSFC function by using fat cone coverings. Even if this fact is straightforward, we include the proof to highlight the differences with the original version in \cite[Lemma 4.7]{muller-sanchez}.

\begin{lemma}\label{lemma3:hsteep}
	Choose $a\in \mathbb{R}$ and take any fat cone covering $\{p_i ' \ll p_i : i\in \mathbb{N} \}$ for $S = S_a$. For every sequence $\{c_i \geq 1 : i \in \mathbb{N} \}$, the non-negative function $h_{a}^{+} = (|a| + 1)\sum_{i} c_i h^{+}_{a,p_i ',p_i}$ satisfies:
	\begin{enumerate}[label=(\roman*)]
		\item Supp$(h^{+}_{a}) \subset J(S_{a-1}, S_{a+2})$.
		\item $h^{+}_{a} > |a|+ 1$ on $S_{a+1}$.
		\item If $x \in J^{-} (S_{a+1})$ and $h^{+}_{a} (x) \neq 0$ then $\nabla h^{+}_{a} (x)$ is timelike and past-pointing.
            \item $g(\nabla h^{+}_{a,p',p}, \nabla h^{+}_{a,p',p})<-1$ on $J(S_{a},S_{a+1})$.
		\item For any future-directed causal $v \in T\bM$, $dh^{+}_{a} (v) \geq \| v\|_h$ on $J(S_{a},S_{a+1})$.
	\end{enumerate} 
\end{lemma}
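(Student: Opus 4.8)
The plan is to exploit two structural features of the construction: the local finiteness of the fat cone covering, which ensures that the a priori infinite sum defining $h_a^+$ is locally a finite sum of smooth functions and hence smooth and nonnegative; and the fact that every summand $c_i h^+_{a,p_i',p_i}$ is nonnegative with past-directed timelike gradient wherever it does not vanish, so that summing preserves all the relevant sign and steepness conditions. First I would invoke local finiteness to justify that $h_a^+$ is a well-defined smooth nonnegative function with $\mathrm{Supp}(h_a^+)=\overline{\bigcup_i \mathrm{Supp}(h^+_{a,p_i',p_i})}$. Since each $p_i'\in T_{a-1}^{a}$ gives $\mathrm{Supp}(h^+_{a,p_i',p_i})\subset J(p_i',S_{a+2})\subset J(S_{a-1},S_{a+2})$, property (i) follows at once. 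For property (ii), the defining property of a fat cone covering for $S_a$ is that the cones $J^{+}(p_i)$ cover $S_{a+1}$; hence every $x\in S_{a+1}$ lies in some $S_{a+1}\cap J^{+}(p_i)$, where $h^+_{a,p_i',p_i}(x)>1$ by Definition \ref{def:hSFC}(ii). As all summands are nonnegative and $c_i\geq 1$, this yields $\sum_i c_i h^+_{a,p_i',p_i}(x)>1$, and multiplying by $|a|+1$ gives (ii).

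For property (iii) I would use that a smooth nonnegative function has vanishing gradient at each of its zeros (a global minimum): thus at a point $x\in J^{-}(S_{a+1})$ every summand with $h^+_{a,p_i',p_i}(x)=0$ also has $\nabla h^+_{a,p_i',p_i}(x)=0$, while by Definition \ref{def:hSFC}(iii) each summand with $h^+_{a,p_i',p_i}(x)\neq 0$ has past-directed timelike gradient. Since $h_a^+(x)\neq 0$ forces at least one summand to be positive, $\nabla h_a^+(x)$ is a combination with coefficients $(|a|+1)c_i>0$ of past-directed timelike vectors and zero vectors; as the past causal cone is convex, the result is again past-directed timelike.

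The heart of the argument, and the only place where I expect a genuine subtlety, is the pair of strengthened conditions (iv) and (v) on $J(S_a,S_{a+1})$ — the latter, $h$-steepness, being exactly the new content absent from \cite[Lemma 4.7]{muller-sanchez}. (I read the subscript in (iv) as $h_a^+$, the natural statement.) Here I would fix $x\in J(S_a,S_{a+1})$ and use the covering property to choose an index $i$ with $x\in J(p_i,S_{a+1})$, so that the distinguished summand satisfies Definition \ref{def:hSFC}(iv)--(v) at $x$. Setting $w_j:=(|a|+1)c_j\nabla h^+_{a,p_j',p_j}(x)$, each $w_j$ is past-directed causal (timelike or zero) by (iii), so $\nabla h_a^+(x)=\sum_j w_j$ is a sum of vectors in a common causal cone. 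For (iv) I invoke the elementary Lorentzian inequality $g(u+w,u+w)\leq g(u,u)$, valid whenever $u,w$ lie in the same closed causal cone since then $g(u,w)\leq 0$; applied with $u=w_i$ and $w=\sum_{j\neq i}w_j$, together with $g(w_i,w_i)=(|a|+1)^2 c_i^2\,g(\nabla h^+_{a,p_i',p_i},\nabla h^+_{a,p_i',p_i})<(|a|+1)^2 c_i^2(-1)\leq -1$, this yields $g(\nabla h_a^+,\nabla h_a^+)<-1$. For (v), given any future-directed causal $v$, I note that $dh^+_{a,p_j',p_j}(v)=g(\nabla h^+_{a,p_j',p_j},v)\geq 0$ for every $j$, since a past-directed timelike (or zero) gradient pairs nonnegatively with a future causal vector; discarding all summands but the distinguished one and using $(|a|+1)c_i\geq 1$ gives $dh_a^+(v)\geq (|a|+1)c_i\,dh^+_{a,p_i',p_i}(v)\geq \|v\|_h$. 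Throughout, the only care needed is to track that the scalings $|a|+1\geq 1$ and $c_i\geq 1$ never weaken the strict inequalities, which the above arrangement respects.
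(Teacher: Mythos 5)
Your proposal is correct and follows essentially the same route as the paper's proof: support inclusion of each summand for (i), the covering of $S_{a+1}$ by the cones $J^+(p_i)$ for (ii), vanishing gradients at zeros plus convexity of the past causal cone for (iii)--(iv), and the pointwise reduction to a single distinguished $h$-steep summand on $J(p_i,S_{a+1})$ for (v), which is exactly the content of the paper's appeal to Lemma~\ref{lemma:suma-es-hsteep} together with the fat cone property. You merely spell out the Lorentzian inequality $g(u+w,u+w)\leq g(u,u)$ and the nonnegativity $g(\nabla h^{+}_{a,p_j',p_j},v)\geq 0$ explicitly where the paper leaves them implicit, and you correctly read the subscript in (iv) as the typo it is.
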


\begin{proof}
{\em	(i)} Each hSFC function $h^{+}_{a,p_i ',p_i}$ has its support included in $J(p_i ', S_{a+2}) \subset J(S_{a-1}, S_{a+2})$.
	
{\em 	(ii)} Since $c_i \geq 1$, it follows that $h^{+}_{a,p_i',p_i} > 1$ on $S_{a+1} \cap J^{+} (p_i)$. Moreover,  the corresponding coverings $\mathcal{C}$ and $\mathcal{C}'$ also cover $S_{a+1}$ because the family $\{p_i ' \ll p_i\}$ form a fat cone covering of $S_a$. 

{\em (iii)-(iv) } Observe that
	$$\nabla h^{+}_{a} = (|a|+1) \sum_i c_i \nabla h^{+}_{a,p_i',p_i},$$
	where $h^{+}_{a} \neq 0$ if and only if $h^{+}_{a,p_i',p_i} \neq 0$ for some $i$. In that case, $\nabla h^{+}_{a,p_i',p_i}$ is timelike and past-directed. Then, the fat cone property of  $\{p_i'\ll p_i\}$ for $S_a$ 
    ensures that the regions where the 
hSFC are steep cover $J(S_{a},S_{a+1})$.

{\em (v) } Using the identity
	$$g( \nabla h^{+}_{a} , v ) = (|a|+1) \sum_i c_i g( \nabla h^{+}_{a,p_i',p_i} , v), $$
	and noting that each term in the sum satisfies
	$$g( \nabla h^{+}_{a,p_i',p_i}, v) \geq \| v \|_{h}\quad\hbox{on $J(p_i, S_{a+1})$}\quad \hbox{(i.e. each $h^{+}_{a,p_i',p_i}$ is $h$-steep)},$$
	it follows that the full sum is also $h$-steep by Lemma \ref{lemma:suma-es-hsteep}. 
    Again, the claim follows from the fat cone property of
     $\{p_i'\ll p_i\}$ for $S_a$.
\end{proof}


Since the gradient of $h^{+}_{a}$ becomes spacelike in some subset of $J(S_{a+1},S_{a+2})$, it is necessary to strengthen the previous lemma in order to construct the final temporal function.

\begin{lemma}\label{lemma4:hsteep}
	Let $h^{+}_{a} \geq 0$ be as in Lemma \ref{lemma3:hsteep}. Then, there exists a function $h^{+}_{a+1}$ satisfying all the properties stated in Lemma \ref{lemma3:hsteep} for $S = S_{a+1}$, and such that the sum $h^{+}_{a} + h^{+}_{a+1}$ is steep and  $h$-steep on 
    $J ( S_{a+1}, S_{a+2})$, thus, 
    $g( \nabla (h^{+}_{a} + h^{+}_{a+1}) , v) <-1$ and, for every future causal vector $v \in TM$,
	\begin{equation*}
		g( \nabla (h^{+}_{a} + h^{+}_{a+1}) , v) \geq \| v\|_h, \quad \text{ on }\;\; J(S_{a+1}, S_{a+2}).
	\end{equation*}
\end{lemma}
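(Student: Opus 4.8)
The plan is to obtain $h^{+}_{a+1}$ by applying Lemma \ref{lemma3:hsteep} to $S = S_{a+1}$, so that $h^{+}_{a+1} = (|a+1|+1)\sum_i c_i\, h^{+}_{a+1,p'_i,p_i}$ for a fat cone covering $\{p'_i \ll p_i\}$ of $S_{a+1}$ and hSFC functions supplied by Proposition \ref{prop:hSFC}; the whole leverage lies in the free constants $c_i \geq 1$. For any admissible choice, $h^{+}_{a+1}$ already satisfies properties (i)--(v) for $S_{a+1}$, so on $J(S_{a+1}, S_{a+2})$ its gradient is past-pointing timelike with $g(\nabla h^{+}_{a+1}, \nabla h^{+}_{a+1}) < -1$ and $dh^{+}_{a+1}(v) \geq \| v \|_h$ for every future causal $v$. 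The only obstruction to transferring these bounds to $h^{+}_a + h^{+}_{a+1}$ is that $J(S_{a+1}, S_{a+2}) \subset J^{+}(S_{a+1})$, where property (iii) for $h^{+}_a$ gives no information: $\nabla h^{+}_a$ may be spacelike, $dh^{+}_a(v)$ may be negative, and Lemma \ref{lemma:suma-es-hsteep} does not apply. The remedy is to enlarge the $c_i$ so that $h^{+}_{a+1}$ dominates $h^{+}_a$ on the relevant region.

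I would treat steepness and $h$-steepness of the sum separately. For steepness, the argument parallels the one of \cite{muller-sanchez} strengthening the analogue of Lemma \ref{lemma3:hsteep}: on each compact piece $\nabla h^{+}_a$ is bounded while $\nabla h^{+}_{a+1}$ is past timelike with $g < -1$ and grows linearly in the $c_i$, so choosing the constants large forces $\nabla(h^{+}_a + h^{+}_{a+1})$ to be past-pointing timelike with $g(\nabla(h^{+}_a+h^{+}_{a+1}), \nabla(h^{+}_a+h^{+}_{a+1})) < -1$. The genuinely new ingredient is the $h$-steepness of the sum, which I would secure as in Lemma \ref{lemma:constant-hsteep}: on a compact set one has a bound $dh^{+}_a(v) \geq -C\| v \|_h$ for all future causal $v$, so that
\[
	d(h^{+}_a + h^{+}_{a+1})(v) \;=\; dh^{+}_a(v) + dh^{+}_{a+1}(v) \;\geq\; -C\| v \|_h + dh^{+}_{a+1}(v),
\]
and it suffices to enlarge the constants until $dh^{+}_{a+1}(v) \geq (1+C)\| v \|_h$, which is possible because each hSFC term already satisfies $dh^{+}_{a+1,p'_i,p_i}(v) \geq \| v \|_h$. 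Overlapping cones contribute past-pointing timelike gradients and non-negative values $dh^{+}_{a+1,p'_j,p_j}(v) \geq 0$ on $J^{-}(S_{a+2})$, so by Lemma \ref{lemma:suma-es-hsteep} they only reinforce both inequalities.

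To globalize, I would fix the constants one cone at a time: for each $i$ the set $K_i := J(p'_i, S_{a+3}) \cap J(S_{a+1}, S_{a+2})$ is compact by global hyperbolicity, so the bounds on $\nabla h^{+}_a$ and hence the required lower bounds on $c_i$ are uniform on $K_i$; keeping all $c_i \geq 1$ preserves properties (i)--(v) of Lemma \ref{lemma3:hsteep} for $S_{a+1}$. Since the fat cone covering guarantees that the regions where the $h^{+}_{a+1,p'_i,p_i}$ are steep and $h$-steep cover $J(S_{a+1}, S_{a+2})$, the resulting $h^{+}_{a+1}$ makes $h^{+}_a + h^{+}_{a+1}$ both steep and $h$-steep there. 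The main obstacle is exactly this passage from pointwise domination to a uniform choice over the a priori non-compact slab $J(S_{a+1}, S_{a+2})$; it is overcome by the local finiteness of the fat cone covering together with the compactness of the diamonds $K_i$ provided by global hyperbolicity, which lets the constants be chosen independently on each compact piece while the overlapping contributions never spoil the estimates.
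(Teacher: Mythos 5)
Your proposal is correct and takes essentially the same route as the paper's proof: a fat cone covering of $S_{a+1}$, per-cone constants $c_i\geq 1$ enlarged on compact causal diamonds (steepness as in \cite[Lemma 4.1]{muller-sanchez}, $h$-steepness precisely via the mechanism of Lemma \ref{remark:h-steepcompactos} and Lemma \ref{lemma:constant-hsteep}), with overlapping cones only reinforcing both estimates by Lemma \ref{lemma:suma-es-hsteep}. The only cosmetic difference is that the paper fixes each $c_i$ on the compact set $J(p_i, S_{a+2})$, where the $i$-th hSFC term is already steep and $h$-steep, rather than on your slightly larger $K_i$; this changes nothing, since you ultimately invoke the domination only on the regions $J(p_i,S_{a+2})$ covering the slab.
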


\begin{proof}
	Consider a fat cone covering $\{p_i ' \ll p_i\}$ of $S = S_{a+1}$ (c.f. \cite[Prop 4.4]{muller-sanchez}). By \cite[Lemma 4.1]{muller-sanchez}, for each $p_i$ there exists a constant $c_i \geq 1$ such that the function $c_i h^{+}_{a,p_i',p_i} + h^{+}_{a}$ is steep on $J(p_i, S_{a+2})$. Furthermore, applying Remark \ref{remark:h-steepcompactos} and Lemma \ref{lemma:constant-hsteep}, we can choose larger constants $c_i$ if necessary so that $c_i h^{+}_{a,p_i',p_i}$ is $h$-steep, and consequently, the sum $c_i h^{+}_{a,p_i',p_i} + h^{+}_{a}$ remains $h$-steep on $J(p_i, S_{a+2})$ by Lemma \ref{lemma:suma-es-hsteep}. The desired function is then defined as $h^{+}_{a+1} = (|a|+2)\sum_{i} c_i h^{+}_{a+1,p_i',p_i}$.
\end{proof}

With these additional ingredients in place, the proof of the main theorem proceeds along the lines of \cite[Theorem 1.2]{muller-sanchez}, with the key differences that we incorporate $h$-steep temporal functions. Lemma \ref{lemma:suma-es-hsteep} guarantees that the sum of such functions remains $h$-steep. Starting with the function $h^{+}_0$ from Lemma \ref{lemma3:hsteep}, we apply Lemma \ref{lemma4:hsteep} inductively for $a = n \in \mathbb{N}$, obtaining
\[
\tau^+=\sum_{n=0}^\infty h_n^+\geq 0,
\]
which is a steep and $h$-steep temporal function on $J^{+}(S_0)$ with support contained in $J^{+} (S_{-1})$. Reversing the time orientation yields $\tau^{-} \geq 0$, an $h$-steep temporal function on $J^{-} (S_0)$. We then define $$\tau = \tau^{+} - \tau^{-},$$ which is an steep and $h$-steep temporal function on the entire manifold $\bM$. The fact that $\tau$ is Cauchy follows exactly as in \cite{muller-sanchez}.


\subsection{Adaptability to the boundary}

To ensure that $\nabla \tau$ is tangent to $\partial M$, so that $\partial M$ remains orthogonal to all the $\tau$-slices, we can follow a strategy similar to that used in \cite[\S 5.2]{zepp:structure} (see also \cite[\S 5.4.2]{sanchez:slicings}).

Consider two copies of $\bM$ and identify homologous points along the boundary $\partial M$ to form the double manifold $\bM ^d$, which inherits the structure of a smooth manifold (without boundary). Next,  extend $g$ smoothly to a metric $g^d$ on $\bM ^d$ in such a way that the natural reflection $r : \bM ^d \to \bM ^d$ becomes an isometry. This can be achieved by choosing a metric $g^* > g$ as a product around $\partial M$ (as in the proof of \cite[Theorem 1.1]{zepp:structure}, see also Proposition~5.2 and Lemma 5.4 in that reference).
Notice also that $g^d$ will be globally hyperbolic since any pair of homologous Cauchy hypersurfaces with boundary of $(\bM^d , g^d)$ merges into a continuous Cauchy hypersuface without boundary. Also, extend $h$ to a complete Riemannian metric $h^d$ on $\bM ^d$ such that $r$ is an isometry (e.g. $h^d = h + r^* h$).

Using the procedure described in the previous subsection, construct a Cauchy temporal function $\tau^d$ that is steep and $h^d$-steep for $(\bM ^d, g^d)$ (Lemma \ref{lemma:suma-es-hsteep}). Then, define an $r$-invariant temporal function $\btau$ by
$$\btau = \tau^d + r^* \tau^d.$$
Since $r$ is an isometry for both $g^d$ and $h^d$, the function $\btau$ is also steep and $h^d$-steep\footnote{Alternatively, the procedure used in \cite[Proposition 13]{Muller2016invarianttemporal} to obtain a steep function invariant by a compact group of conformal diffeomorphisms could be used at this stage. It is worth pointing out that steep and $h$-steep functions invariant by such a group can be also obtained.}.
The restriction $\tau := \btau |_{\bM}$ is a Cauchy temporal function on $\bM$ that is steep and $h$-steep. Moreover, since $\btau$ is invariant under the reflection $r$, the $g$-gradient of $\tau$ is tangent to $\partial M$.

\section{Example on $d_W^\tau$ and the encoding of causality} \label{app:no_codifica}
The following example shows that, in general, the Wick distance $d_W^\tau$ does not encode causality and, thus, its modification into $\hat d_W^\tau$ would be necessarry for this purpose (Proposition~\ref{prop:d_W_encoding}).

Consider $\mathbb{R}^2$ with coordinates $(t,x)$ endowed with a metric of the form
$$g = -dt^2 + f^2 (t,x) dx^2, \qquad g_W^{\tau} = dt^2 + f^2 (t,x) dx^2, $$
where $f:\mathbb{R}^2 \rightarrow \mathbb{R}$ is given by
\begin{equation*}
	f(t,x) = \begin{cases}
		1 & \text{if } (t,x) \notin U:= \{(t,x) \in \mathbb{R}^2: -0.1 \leq t \leq 0.1,\;\; 0.2 \leq x \leq 1.25 \}, \\
		\frac{1}{1000} & \text{if }  -0.06 \leq t \leq 0.06 \text{ and }  0.2 \leq x \leq 1.25, \\
		> 0 & \text{otherwise}.
	\end{cases}
\end{equation*}
\begin{figure}[ht!]
	\centering
	\includegraphics[scale=0.5]{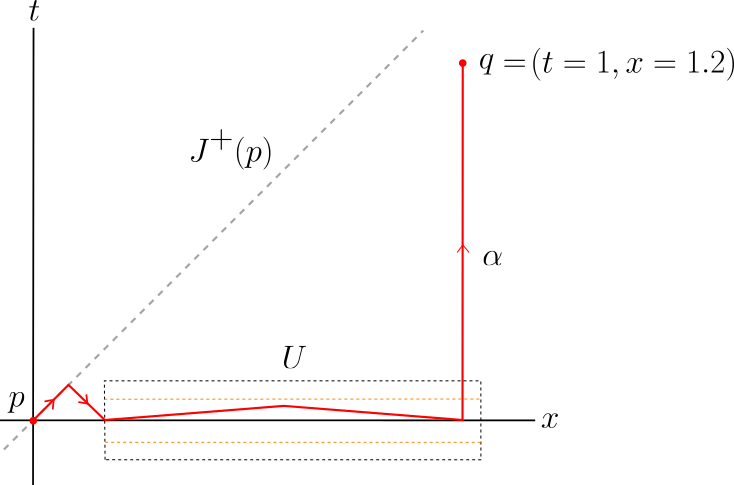}
	\caption{The curve $\alpha$ is piecewise causal and has $L_W^\tau$-length less than $\sqrt{2} (\tau (q) - \tau(p))$, yet $p \nleq q$.}
	\label{fig:pw-causal}
\end{figure}
Next, consider the points $p = (0,0)$ and $q = (1,1.2)$, and the curve $\alpha : [0,2.2] \to \mathbb{R}^2$ from $p$ to $q$ defined as follows (see Figure \ref{fig:pw-causal}): for $n =0,1,...,9$,
\begin{equation*}
	\alpha(s) := \begin{cases}
		(s,s) &  0 \leq s < 0.1, \\
		(-s+0.2,s) & 0.1 \leq s < 0.2, \\
		(\frac{1}{\sqrt{1000}}(s-0.2),s) & 0.2 \leq s < 1.7, \\
		(\frac{1}{\sqrt{1000}}(-s+1.2),s) & 0.7  \leq s < 1.2, \\
		(s-1.2,1.2) & 1.2 \leq s \leq 2.2.
	\end{cases}
\end{equation*}
Note that
\begin{equation*}
	d_W^\tau (p,q) \leq L_W^\tau( \alpha) \leq 0.2 \sqrt{2} + \frac{\sqrt{2}}{\sqrt{1000}} + 1 < \sqrt{2}.
\end{equation*}
Thus, $p,q$ satisfy $d_W^\tau (p,q) < \sqrt{2} (t(q) - t(p))$, yet clearly $p \nleq q$.

\end{appendices}

\section*{Acknowledgements}
The authors acknowledge Prof. A. Zeghib (ENS Lyon) for pointing out the reference \cite{Mounoud2003} and further explanations on it (leading to Example \ref{ejemplo2}). Comments by A. Sakovich and C. Sormani (including a correction in the statement of Proposition \ref{thm:leonardo_codificacion}) and by L. Garc\'{\i}a-Heveling (including pointing out references \cite{Galloway2024encoding, Geroch}) are also warmly acknowledged.

The authors are partially supported by the IMAG–María de Maeztu grant CEX2020-001105-M (funded by MCIN/AEI/10.13039/50110001103), which includes a doctoral grant for the first named one, and by the project PID2024-156031NB-I00. In addition, the first and third named authors are partially supported by the project PID2020-116126GB-I00 (MCIN/AEI/10.13039/501100011033), while the second author is partially supported by the grant PID2020-118452GB-I00 (Spanish MICINN) and the project PY20-01391 (PAIDI 2020, Junta de Andalucía–FEDER). \

The first named author also acknowledges the hosting at U. Hamburg for a stay supervised by Prof. M. Graf. The first and second named authors also thank the support by the Austrian Science Fund (FWF) [Grant DOI 10.55776/EFP6] to attend a meeting at U. Vienna.

%

\bibliographystyle{plain}
\bibliography{referencias1}

\end{document}